\numberwithin{equation}{section}
\numberwithin{figure}{section}
\theoremstyle{plain}
\newtheorem{thm}{\protect\theoremname}
\theoremstyle{plain}
\newtheorem{conjecture}[thm]{\protect\conjecturename}
\theoremstyle{definition}
\newtheorem{problem}[thm]{\protect\problemname}
\theoremstyle{remark}
\newtheorem{rem}[thm]{\protect\remarkname}
\theoremstyle{definition}
\newtheorem{defn}[thm]{\protect\definitionname}
\theoremstyle{plain}
\newtheorem{prop}[thm]{\protect\propositionname}
\theoremstyle{plain}
\newtheorem{lem}[thm]{\protect\lemmaname}
\providecommand{\conjecturename}{Conjecture}
\providecommand{\definitionname}{Definition}
\providecommand{\lemmaname}{Lemma}
\providecommand{\problemname}{Problem}
\providecommand{\propositionname}{Proposition}
\providecommand{\remarkname}{Remark}
\providecommand{\theoremname}{Theorem}
\begin{document}
\address[Minoru Hirose]{Institute for Advanced Research, Nagoya University,  Furo-cho, Chikusa-ku, Nagoya, 464-8602, Japan}
\email{minoru.hirose@math.nagoya-u.ac.jp}
\subjclass[2010]{11M32}
\title{The cyclotomic Grothendieck-Teichm\"{u}ller group and the motivic
Galois group}
\author{Minoru Hirose}
\begin{abstract}
We show that the level 2 case of the cyclotomic Grothendieck-Teichm\"{u}ller
groups introduced by Enriquez coincides with the motivic Galois group
of mixed Tate motives over $\mathbb{Z}[1/2]$.
\end{abstract}

\maketitle
\global\long\def\et{\mathrm{\acute{e}t}}%

\section{Introduction}

The purpose of this paper is to show the coincidence of the cyclotomic
Grothendieck-Teichm\"{u}ller group for level $N=2$ and the motivic
Galois group of mixed Tate motives over $\mathbb{Z}[1/2]$. This work
concerns with the cyclotomic cases of the Grothendieck-Teichm\"{u}ller
theory, which are originated from the level one case. So let us start
from the brief introduction for the original case.

Let $\mathcal{MTM}(\mathbb{Z})$ be the Tannakian category of mixed
Tate motives over $\mathbb{Z}$, and $\mathcal{G}_{1}$ its motivic
Galois group with respect to the canonical fiber functor. Then $\mathcal{G}_{1}$
is decomposed as $\mathcal{G}_{1}=\mathbb{G}_{m}\ltimes\mathcal{U}_{1}$
where $\mathcal{U}_{1}$ is the prounipotent part of $\mathcal{G}_{1}$.
For $p,q\in\{0,1\}$, let $\pi_{1}^{{\rm mot}}(\mathbb{P}^{1}\setminus\{0,1,\infty\},p,q)$
be the the motivic fundamental torsor of path from $p$ to $q$ on
$\mathbb{P}^{1}\setminus\{0,\infty,1\}$, and ${_{q}\Pi_{p}}$ its
realization with respect to the canonical fiber functor. We will write
$\exp\mathfrak{t}_{3,1}^{0}({\bf k})$ for the set of group-like power
series in ${\bf k}\langle\langle e^{0},e^{1}\rangle\rangle$.\footnote{Later, we will define a Lie algebra $\mathfrak{t}_{n,N}$ and its
sub-Lie algebra $\mathfrak{t}_{n,N}^{0}$ for general $n\geq2$ and
$N\geq1$.} Then ${_{q}\Pi_{p}}({\bf k})$ is canonically isomorphic to $\exp\mathfrak{t}_{3,1}^{0}({\bf k})$
for any $p,q\in\{0,1\}$.\footnote{In this paper, we will use a convention such that the path $\gamma\in\pi_{1}(\mathbb{P}^{1}\setminus\{0,1,\infty\},p,q)$
corresponds to a power series whose coefficients of $e^{a_{k}}\cdots e^{a_{1}}$
is $\int_{0<t_{1}<\cdots<t_{k}<1}\prod_{j=1}^{k}d\log(\gamma(t_{j})-a_{j}).$} For a group-like power series $f\in\exp\mathfrak{t}_{3,1}^{0}({\bf k})$,
we write the corresponding element in ${_{q}\Pi_{p}}({\bf k})$ as
${_{q}f_{p}}$. Then $\mathcal{G}_{1}$ acts on ${_{q}\Pi_{p}}$ for
$p,q\in\{0,1\}$, and these actions can be recovered only from the
data of the map
\[
\lambda_{1}:\mathcal{U}_{1}\xrightarrow{\sigma\mapsto\sigma({_{1}1_{0}})}{_{1}\Pi_{0}}\simeq\exp\mathfrak{t}_{3,1}^{0}
\]
(see \cite[Section 5]{DelGon}). It was proved by Brown \cite{Bro_mix}
that $\lambda_{1}$ is injective, or equivalently the motivic Galois
action $\mathcal{G}_{1}\curvearrowright{_{1}\Pi_{0}}$ is faithful.
On the other hand, Drinfeld \cite{Dr_quasi} introduced an intermediate
closed subscheme ${\rm im}(\lambda_{1})\subset{\rm GRT}_{1}\subset\exp\mathfrak{t}_{3,1}^{0}$
called (the prounipotent version of) the Grothendieck-Teichm\"{u}ller
group, which is closely related to Grothendieck's approach in \cite{Gro}
to the description of the action of the absolute Galois group. The
following is a fundamental conjecture in this area.
\begin{conjecture}
${\rm im}(\lambda_{1})={\rm GRT}_{1}$.
\end{conjecture}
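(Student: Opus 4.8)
The plan is to treat the two inclusions separately, since one is essentially formal and the other carries the entire difficulty. The inclusion $\mathrm{im}(\lambda_{1})\subseteq\mathrm{GRT}_{1}$ is exactly what Drinfeld's definition is built to guarantee: the motivic prounipotent fundamental groupoid of $\mathbb{P}^{1}\setminus\{0,1,\infty\}$ satisfies the pentagon, hexagon and inversion/duality constraints, so for every $\sigma\in\mathcal{U}_{1}$ the group-like series $\lambda_{1}(\sigma)$ (the motivic associator) lies in $\mathrm{GRT}_{1}$. This is just the statement that the Grothendieck--Teichm\"{u}ller relations hold among motivic iterated integrals, and nothing beyond the functoriality of the motivic fundamental groupoid is needed. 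So the real content is the reverse inclusion $\mathrm{GRT}_{1}\subseteq\mathrm{im}(\lambda_{1})$.

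First I would pass to Lie algebras and recast the reverse inclusion as a statement about graded dimensions. Write $\mathfrak{u}_{1}:=\mathrm{Lie}\,\mathcal{U}_{1}$ and $\mathfrak{grt}_{1}:=\mathrm{Lie}\,\mathrm{GRT}_{1}$, both viewed via $\lambda_{1}$ as graded sub-Lie-algebras of the free Lie algebra on $e^{0},e^{1}$. By the injectivity of $\lambda_{1}$ (Brown), $\lambda_{1}$ identifies $\mathfrak{u}_{1}$ with its image, and by Brown's freeness theorem $\mathfrak{u}_{1}$ is the free graded Lie algebra on one generator $\sigma_{2i+1}$ in each odd degree $2i+1\geq3$. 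The same elements $\sigma_{2i+1}$ are known to generate a free sub-Lie-algebra of $\mathfrak{grt}_{1}$, which $\lambda_{1}$ carries onto $\mathfrak{u}_{1}$; hence $\mathfrak{u}_{1}\subseteq\mathfrak{grt}_{1}$ and the conjecture is equivalent to the graded equality $\dim_{\mathbb{Q}}(\mathfrak{grt}_{1})_{n}=\dim_{\mathbb{Q}}(\mathfrak{u}_{1})_{n}$ for all $n$, i.e.\ to the assertion that $\mathfrak{grt}_{1}$ is \emph{itself} free on $\{\sigma_{2i+1}\}_{i\geq1}$. Since the inclusion already gives the lower bound $\dim(\mathfrak{grt}_{1})_{n}\geq\dim(\mathfrak{u}_{1})_{n}$, everything reduces to an \emph{upper} bound on $\dim(\mathfrak{grt}_{1})_{n}$.

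To attack that upper bound I would filter $\mathfrak{grt}_{1}$ by depth (the number of $e^{1}$'s) and study the associated depth-graded Lie algebra, which is governed by the linearized pentagon / double-shuffle relations. In low depth this is tractable: the depth-graded object is free in depth $1$, and in depth $2$ its defect is controlled precisely by the period polynomials of cusp forms for $\mathrm{SL}_{2}(\mathbb{Z})$ (the Schneps--Goncharov--Brown description). The strategy would be to propagate this across all depths and show that the modular contributions which make the depth-graded Lie algebra non-free are exactly cancelled upon returning to the full Lie algebra, so that no generators beyond the $\sigma_{2i+1}$ and no extra relations survive. The main obstacle is exactly this last step: there is at present no mechanism producing any upper bound on $\dim(\mathfrak{grt}_{1})_{n}$, and even the weaker facts that $\mathfrak{grt}_{1}$ is generated in odd degrees and that it is free remain open. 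The pentagon equation is an infinite, tightly coupled system, and extracting from it a clean dimension count matching the motivic side --- capturing the precise cancellation of the cusp-form defects across all depths --- is the crux of the difficulty, and the reason the statement stays a conjecture rather than a theorem.
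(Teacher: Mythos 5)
The statement you were handed is Conjecture 1 of the paper, not a theorem: the paper offers no proof of it, presents it explicitly as the fundamental open problem of the area, and instead proves the level-two analogue $\mathrm{im}(\lambda_{2})={\rm GRTM}_{(\bar{1},1)}(2)$ (Theorem \ref{thm:main}). So there is no proof in the paper to compare yours against, and your write-up --- correctly --- does not claim to settle the question. The ingredients you cite are accurately stated: the inclusion $\mathrm{im}(\lambda_{1})\subseteq{\rm GRT}_{1}$ is known (the paper quotes it when introducing ${\rm GRT}_{1}$), Brown's results give the injectivity of $\lambda_{1}$ and the freeness of $\mathfrak{u}_{1}$ on generators $\sigma_{2i+1}$ in odd degrees, and your reduction of the conjecture to the upper bound $\dim(\mathfrak{grt}_{1})_{n}\leq\dim(\mathfrak{u}_{1})_{n}$ is sound. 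The genuine gap is the one you name yourself: no mechanism is known that bounds $\dim(\mathfrak{grt}_{1})_{n}$ from above, or even shows that $\mathfrak{grt}_{1}$ is generated in odd degrees or is free, and the depth-graded picture with its period-polynomial defects in depth two has never been propagated back to the full Lie algebra. Your text is therefore an accurate survey of the obstruction rather than a proof.

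It is worth contrasting your strategy with how the paper handles the level-two case, since the method is entirely different and clarifies what is missing at level one. The paper never counts dimensions. It works dually: $\mathcal{O}(\mathrm{im}(\lambda_{N}))$ is identified with $\mathcal{H}_{N}/(2\pi i)^{\mathfrak{m}}$, so equality of group schemes becomes the statement that every solution of the defining equations pairs to zero with every element of $\ker L^{\mathfrak{a}}$. At $N=2$ this is achieved by combining two inputs: the theorem of \cite{HS_EulerSum} that $\ker L^{\mathfrak{m}}$ is exhausted by the explicit confluence relations $\mathcal{I}_{{\rm CF}}$ (Theorem \ref{thm:kerLm=00003DICF}), and the paper's Theorem \ref{thm:Gpent-ICF}, proved by pairing the mixed pentagon equation in $U\mathfrak{t}_{4,2}$ against explicitly constructed dual functionals $\psi(g(u))$ together with Broadhurst duality --- note that only the pentagon equation is used, not the full list of defining equations of ${\rm GRTM}_{(\bar{1},1)}(2)$. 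The analogous route at level one would require a complete combinatorial description of $\ker L^{\mathfrak{m}}$ for level-one motivic iterated integrals together with a proof that every pentagon solution annihilates it; neither this nor any dimension bound of the kind your approach needs is currently available, which is exactly why the statement remains a conjecture.
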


Now let us consider the cyclotomic case. Let $N$ be a positive integer
and $\mu_{N}$ the set of $N$-th roots of unity. Let $\mathcal{MTM}(\mathbb{Z}[\mu_{N},1/N])$
be the Tannakian category of mixed Tate motives over $\mathbb{Z}[\mu_{N},1/N]$,
and $\mathcal{G}_{N}$ its motivic Galois group with respect to the
canonical fiber functor. Then $\mathcal{G}_{N}$ is decomposed as
$\mathcal{G}_{N}=\mathbb{G}_{m}\ltimes\mathcal{U}_{N}$ where $\mathcal{U}_{N}$
is the prounipotent part of $\mathcal{G}_{N}$. For $p,q\in\{0\}\cup\mu_{N}$,
let $\pi_{1}^{{\rm mot}}(\mathbb{P}^{1}\setminus\{0,\infty\}\cup\mu_{N},p,q)$
be the the motivic fundamental torsor of path from $p$ to $q$ on
$\mathbb{P}^{1}\setminus\{0,\infty\}\cup\mu_{N}$, and ${_{q}\Pi_{p}^{(N)}}$
its realization with respect to the canonical fiber functor. We will
write $\exp\mathfrak{t}_{3,N}^{0}({\bf k})$ for the set of group-like
power series in ${\bf k}\langle\langle e^{a}\mid a\in\{0\}\cup\mu_{N}\rangle\rangle$.
Then ${_{q}\Pi_{p}^{(N)}}({\bf k})$ is canonically isomorphic to
$\exp\mathfrak{t}_{3,N}^{0}({\bf k})$ for any $p,q\in\{0,1\}$. For
$f\in\exp\mathfrak{t}_{3,N}^{0}({\bf k})$, we write the corresponding
element in ${_{q}\Pi_{p}^{(N)}}({\bf k})$ as ${_{q}f_{p}}$. Then
$\mathcal{G}_{N}$ acts on ${_{q}\Pi_{p}}$ for all $p,q\in\{0\}\cup\mu_{N}$,
and these actions can be recovered only from the data of the map
\[
\lambda_{N}:\mathcal{U}_{N}\xrightarrow{\sigma\mapsto\sigma({_{1}0_{0}})}{_{1}\Pi_{0}^{(N)}}\simeq\exp\mathfrak{t}_{3,N}^{0}
\]
(see \cite[Section 5]{DelGon}). For general $N>1$, $\lambda_{N}$
is not necessary injective, but it is proved by Deligne \cite{Deli_es}
that $\lambda_{N}$ is injective for $N\in\{2,3,4,8\}$. Furthermore,
Enriquez \cite{Enr_quasi} generalize ${\rm GRT}_{1}$ to an intermediate
closed subscheme ${\rm im}(\lambda_{N})\subset{\rm GRTM}_{(\bar{1},1)}(N)\subset\exp\mathfrak{t}_{3,N}^{0}$
called cyclotomic Grothendieck-Teichm\"{u}ller group of level $N$.
Then the following problem is fundamental.
\begin{problem}
The image ${\rm im}(\lambda_{N})$ is equal to ${\rm GRTM}_{(\bar{1},1)}(N)$
or similar subgroup?
\end{problem}

The following is the main theorem of this paper.
\begin{thm}
\label{thm:main}${\rm im}(\lambda_{2})={\rm GRTM}_{(\bar{1},1)}(2)$.
\end{thm}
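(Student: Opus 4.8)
The plan is to deduce the equality from a comparison of graded dimensions, after reducing the statement to the level of Lie algebras. Both $\mathrm{im}(\lambda_{2})$ and $\mathrm{GRTM}_{(\bar{1},1)}(2)$ are closed subgroups of the prounipotent group $\exp\mathfrak{t}_{3,2}^{0}$ that are stable under the $\mathbb{G}_{m}$-action coming from the weight grading, and by the construction recalled in the introduction we already have $\mathrm{im}(\lambda_{2})\subseteq\mathrm{GRTM}_{(\bar{1},1)}(2)$. Writing $\mathfrak{u}_{2}=\mathrm{Lie}(\mathcal{U}_{2})$ and $\mathfrak{grtm}$ for the graded Lie algebra of $\mathrm{GRTM}_{(\bar{1},1)}(2)$, it therefore suffices to prove that the induced inclusion $\mathfrak{u}_{2}\hookrightarrow\mathfrak{grtm}$ of graded Lie algebras is an isomorphism; here I use that $\lambda_{2}$ is injective (Deligne \cite{Deli_es}), so that $\mathfrak{u}_{2}$ is genuinely the Lie algebra of $\mathrm{im}(\lambda_{2})$. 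Since an inclusion of graded vector spaces with finite-dimensional weight pieces is an equality as soon as the two sides share the same Hilbert series, the whole problem collapses to the comparison of $\dim_{\mathbb{Q}}(\mathfrak{u}_{2})_{n}$ with $\dim_{\mathbb{Q}}(\mathfrak{grtm})_{n}$ in each weight $n$.

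For the motivic side I would invoke the structure of $\mathcal{MTM}(\mathbb{Z}[1/2])$. Because this category has vanishing $\mathrm{Ext}^{2}$, its fundamental Lie algebra $\mathfrak{u}_{2}$ is free, and the number of free generators in weight $n$ equals $\dim_{\mathbb{Q}}\mathrm{Ext}^{1}(\mathbb{Q}(0),\mathbb{Q}(n))$. Reading these $\mathrm{Ext}$-groups off the $K$-theory of $\mathbb{Z}[1/2]$ gives one generator in weight $1$ (coming from the unit $2$), one in each odd weight $\geq 3$, and none in even weight. Hence $\mathfrak{u}_{2}$ is the free graded Lie algebra on one generator in each odd weight, and the Hilbert series of its universal enveloping algebra is $(1-t^{2})/(1-t-t^{2})$; this pins down every $\dim(\mathfrak{u}_{2})_{n}$ explicitly and, reassuringly, reproduces the known dimensions of weight-$n$ alternating multiple zeta values.

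The heart of the proof is then the matching upper bound $\dim(\mathfrak{grtm})_{n}\leq\dim(\mathfrak{u}_{2})_{n}$. Here I would work directly with Enriquez's defining equations for $\mathrm{GRTM}_{(\bar{1},1)}(2)$ (the pentagon, the mixed hexagon/octagon relations, and the special relation attached to the torsion point $-1$) and pass to their linearizations with respect to the depth filtration on $\mathfrak{t}_{3,2}^{0}$. The aim is to show that an element of $\mathfrak{grtm}$ is determined, modulo the part already accounted for by the free algebra $\mathfrak{u}_{2}$, by its lowest-depth component, and that the space of admissible lowest-depth components in weight $n$ has dimension exactly the number of odd-weight generators predicted above. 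Equivalently, one proves that $\mathfrak{grtm}$ is itself free with generators in the same weights as those of $\mathfrak{u}_{2}$; combined with the inclusion $\mathfrak{u}_{2}\subseteq\mathfrak{grtm}$, this forces equality weight by weight and hence $\mathrm{im}(\lambda_{2})=\mathrm{GRTM}_{(\bar{1},1)}(2)$.

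I expect this last step to be the principal obstacle, for the same structural reason that the level-$1$ equality $\mathrm{im}(\lambda_{1})=\mathrm{GRT}_{1}$ remains conjectural: a priori a Grothendieck--Teichm\"{u}ller-type group can be strictly larger than the motivic image, the discrepancy being controlled by an obstruction space (the analogue, in the depth-two level-one situation, of the space of cusp forms). The decisive feature of the case $N=2$ is that this obstruction must be shown to vanish, so that the level-$2$ equations admit no solutions beyond those generated in odd weight. Establishing this vanishing---most plausibly by exhibiting an explicit free family of \emph{motivic} generators of $\mathfrak{grtm}$ together with a dimension count leaving no room for further generators---is where the genuine work lies, and it is precisely the rigidity of the level-$2$ relations that makes it tractable.
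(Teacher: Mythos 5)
Your reduction is sound as far as it goes: the inclusion $\mathrm{im}(\lambda_{2})\subseteq{\rm GRTM}_{(\bar{1},1)}(2)$ is indeed Enriquez's, injectivity of $\lambda_{2}$ is Deligne's, and your description of the motivic side (freeness of $\mathfrak{u}_{2}$ on one generator in each odd weight, with Hilbert series $(1-t^{2})/(1-t-t^{2})$ for the enveloping algebra) is correct. But the proposal has a genuine gap, and it is exactly where you say it is: the upper bound $\dim(\mathfrak{grtm})_{n}\leq\dim(\mathfrak{u}_{2})_{n}$ \emph{is} the theorem, and you give no mechanism for proving it. The plan to linearize Enriquez's equations along the depth filtration and show that elements are determined by lowest-depth components is stated as an aim, not carried out, and your own analogy with level $1$ shows why this cannot be waved through: there the same strategy is obstructed by cusp forms and the equality remains conjectural. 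The fallback you suggest --- exhibiting an explicit free family of \emph{motivic} generators of $\mathfrak{grtm}$ plus a dimension count ``leaving no room'' --- is circular: motivic elements only reproduce the lower bound $\mathfrak{u}_{2}\subseteq\mathfrak{grtm}$ that you already have, while ``no room for further generators'' is precisely the missing upper bound.

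For comparison, the paper never performs a dimension count. It uses the dual (Tannakian) description $\mathrm{im}(\lambda_{2})({\bf k})=\{\varphi\in\exp\mathfrak{t}_{3,2}^{0}({\bf k})\mid\langle\varphi,u\rangle=0\ \text{for all}\ u\in\ker L^{\mathfrak{a}}\}$, imports from \cite{HS_EulerSum} the theorem that $\ker L^{\mathfrak{m}}=\mathcal{I}_{{\rm CF}}+(e_{0},e_{1})$ (confluence relations), and then proves the stronger Theorem \ref{thm:Gpent-ICF}: any $h$ satisfying only the mixed pentagon equation, i.e.\ any $h\in\mathrm{PENT}(2,{\bf k})\supseteq{\rm GRTM}_{(\bar{1},1)}(2,{\bf k})$, annihilates $\mathcal{I}_{{\rm CF}}$. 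That step is carried out by pairing the pentagon equation in $U\mathfrak{t}_{4,2}^{0}$ against explicitly constructed functionals $\psi(f(u))$, $\psi(g(u))$ (Sections \ref{sec:Constr-elem-Homk}--\ref{sec:proof-of-main}), with a Broadhurst duality for $\mathrm{PENT}(2,{\bf k})$ as the key intermediate. In other words, the role your unproven upper bound would have to play is filled in the paper by the confluence-relation theorem together with the explicit pentagon computations; without an argument of comparable substance for the bound on $\mathfrak{grtm}$, your outline does not constitute a proof.
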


\begin{rem}
In \cite{Enr_quasi}, Enriquez also introduce many objects similar
to ${\rm GRTM}_{(\bar{1},1)}(N)$ such like ${\rm GRTM(N)}$, ${\rm GTM}(N)$,
${\rm Pseudo}(N)$, ${\rm GTM}(N)_{l}$, and so on. We make some comments
on the relationships between these objects and Theorem \ref{thm:main}.
First, ${\rm GRTM}(N)$ is an algebraic group $((\mathbb{Z}/N\mathbb{Z})^{\times}\times\mathbb{G}_{m})\ltimes{\rm GRTM}_{(\bar{1},1)}(N)$.
Especially, ${\rm GRTM}(2)=\mathbb{G}_{m}\ltimes{\rm GRTM}_{(\bar{1},1)}(2)$.
Thus Theorem \ref{thm:main} implies ${\rm GRTM}(2)\simeq\mathcal{G}_{2}$.
Second, ${\rm GTM}(N)$ is an algebraic group and ${\rm Pseudo}(N)$
is an $({\rm GRTM}(N),{\rm GTM}(N))$-bitorsor. Then Theorem \ref{thm:main}
implies that ${\rm GTM}(2)\simeq\mathcal{G}_{2}^{{\rm Betti}}$ and
${\rm GTM}(2)\simeq\mathcal{G}_{2}^{{\rm dR},{\rm Betti}}$ where
$\mathcal{G}_{2}^{{\rm Betti}}$ is the motivic Galois group of $\mathcal{MTM}(\mathbb{Z}[1/2])$
with respect to the Betti functor and $\mathcal{G}_{2}^{{\rm dR},{\rm Betti}}$
is the motivic Galois bitorsor of $\mathcal{MTM}(\mathbb{Z}[1/2])$
with respect to the Betti and canonical (=de Rham) functors. Third,
${\rm GTM}(N)_{l}$ is a pro-$l$ variant of ${\rm GTM}(N)$ and there
is natural maps ${\rm GTM}(N)_{l}\to{\rm GTM}(N,\mathbb{Q}_{l})$,
$\widehat{{\rm GT}}\to{\rm GTM}(N)_{l}$, and ${\rm GTM}(N)_{l}\to{\rm GT}_{l}$.
Then the map ${\rm Gal}(\bar{\mathbb{Q}}/\mathbb{Q})\to{\rm GT}_{l}$
studies by Ihara \cite{Ihara} can be factored as
\[
{\rm Gal}(\bar{\mathbb{Q}}/\mathbb{Q})\hookrightarrow\widehat{{\rm GT}}\to{\rm GTM}(N)_{l}\to{\rm GT}_{l}.
\]
Since the canonical map ${\rm Gal}(\bar{\mathbb{Q}}/\mathbb{Q})\to\mathcal{G}_{2}^{\et}(\mathbb{Q}_{l})\simeq\mathcal{G}_{2}^{{\rm Betti}}(\mathbb{Q}_{l})$
is Zariski dense, Theorem \ref{thm:main} implies that the composite
map
\[
{\rm Gal}(\bar{\mathbb{Q}}/\mathbb{Q})\hookrightarrow\widehat{{\rm GT}}\to{\rm GTM}(N)_{l}\to{\rm GTM}(N,\mathbb{Q}_{l})
\]
is Zariski dense for $N=2$.
\end{rem}

The proof of Theorem \ref{thm:main} is essentially based on the result
in \cite{HS_EulerSum}. Let us sketch the proof. First, the dual side
of the morphism
\[
\lambda_{N}:\mathcal{U}\to\exp\mathfrak{t}_{3,N}^{0}
\]
is described as follows. Let $\mathcal{H}_{N}$ be the ring of effective
motivic periods of $\mathcal{MTM}(\mathbb{Z}[\mu_{N},1/N])$ and $(2\pi i)^{\mathfrak{m}}\in\mathcal{H}_{N}$
the motivic $2\pi i$. Then $\mathcal{O}(\mathcal{U})$ is canonically
isomorphic to $\mathcal{H}_{N}/(2\pi i)^{\mathfrak{m}}$. Let us identify
$\mathcal{O}(\exp\mathfrak{t}_{3,N}^{0})$ with $\mathfrak{h}_{N}^{\shuffle}\coloneqq(\mathfrak{h}_{N},\shuffle)$
where $\mathfrak{h}_{N}\coloneqq\mathbb{Q}\langle e_{\xi}\mid\xi\in\{0\}\cup\mu_{N}\rangle$
is a free non-commutative algebra with $N+1$ generators and $\shuffle:\mathfrak{h}_{N}\times\mathfrak{h}_{N}\to\mathfrak{h}_{N}$
is the shuffle product by the $\mathbb{Q}$-linear paring 
\[
\langle\varphi,e_{a_{1}}\cdots e_{a_{k}}\rangle=(\text{coefficient of }e^{a_{k}}\cdots e^{a_{1}}\text{ in }\varphi)\qquad(\varphi\in\exp\mathfrak{t}_{3,N}^{0},e_{a_{1}}\cdots e_{a_{k}}\in\mathfrak{h}_{N}).
\]
Then the dual side of $\lambda_{N}$ is given in terms of motivic
iterated integral as
\[
\mathfrak{h}_{N}^{\shuffle}\xrightarrow{L^{\mathfrak{m}}}\mathcal{H}_{N}\xrightarrow{\bmod\,(2\pi i)^{\mathfrak{m}}}\mathcal{H}_{N}/(2\pi i)^{\mathfrak{m}}
\]
where $L^{\mathfrak{m}}$ is a $\mathbb{Q}$-linear map defined by
the motivic iterated integrals from $0$ to $1$. We write this composite
map as $L^{\mathfrak{a}}:\mathfrak{h}_{N}^{\shuffle}\to\mathcal{H}_{N}/(2\pi i)^{\mathfrak{m}}$.
Then, for $\mathbb{Q}$-algebra ${\bf k}$, the set of ${\bf k}$-rational
points of ${\rm im}(\lambda_{N})$ is given by
\[
\{\varphi\in\exp\mathfrak{t}_{3,N}^{0}({\bf k})\mid\langle\varphi,u\rangle=0\ \text{for all }u\in\ker L^{\mathfrak{a}}\}.
\]
Thus the theorem is equivalent to
\begin{equation}
\langle\varphi,u\rangle=0\quad\quad(\varphi\in{\rm GRTM}_{(\bar{1},1)}(2,{\bf k}),\,u\in\ker L^{\mathfrak{a}}).\label{eq:state1}
\end{equation}
In \cite{HS_EulerSum}, we introduce the set of (level two) confluence
relations $\mathcal{I}_{{\rm CF}}\subset\mathfrak{h}_{2}^{\shuffle}$,
and showed that $\ker L^{\mathfrak{m}}$ is equal to $\mathcal{I}_{{\rm CF}}+(e_{0},e_{1})$
where $(e_{0},e_{1})$ is the ideal of $\mathfrak{h}_{N}^{\shuffle}$
spanned by $e_{0}$ and $e_{1}$. Therefore the statement (\ref{eq:state1})
is equivalent to
\begin{equation}
\langle\varphi,u\rangle=0\quad\quad(\varphi\in{\rm GRTM}_{(\bar{1},1)}(2,{\bf k}),\,u\in\mathcal{I}_{{\rm CF}}).\label{eq:state2}
\end{equation}
Let $\mathrm{PENT}(N,{\bf k})\subset\exp\mathfrak{t}_{3,N}^{0}({\bf k})$
be the set of group-like power series satisfying the mixed pentagon
equation which is one of the defining equations of ${\rm GRTM}_{(\bar{1},1)}(N,{\bf k})$.
The mixed pentagon equation is an equation in $U\mathfrak{t}_{4,N}^{0}({\bf k})=U\mathfrak{t}_{4,N}^{0}(\mathbb{Q})\hat{\otimes}{\bf k}$.
In this paper, we prove the following stronger statement:
\begin{thm}
\label{thm:Gpent-ICF}If $\varphi\in\mathrm{PENT}(2,{\bf k})$ then
$\langle\varphi,u\rangle=0$ for $u\in\mathcal{I}_{{\rm CF}}$.
\end{thm}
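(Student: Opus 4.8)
The plan is to deduce the confluence relations directly from the mixed pentagon equation by making precise the idea that both encode degenerations in the same one-parameter family of configurations. By $\mathbb{Q}$-linearity of $\langle\varphi,\,\cdot\,\rangle$ it suffices to verify the vanishing on a generating set of $\mathcal{I}_{\mathrm{CF}}$. I would first recall from \cite{HS_EulerSum} how these generators are produced: one introduces an auxiliary moving point $z$ on $\mathbb{P}^1\setminus\{0,1,-1,\infty\}$ and the generating series $L(z)=\sum_w (\text{regularized iterated integral of }w\text{ from }0\text{ to }z)\,w$, which is the flat section of a KZ-type connection with regular singularities at $z\in\{0,1,-1,\infty\}$; a confluence relation is the comparison of the two regularized limits of $L(z)$ obtained when $z$ is driven to one boundary point of this family along two homotopically distinct routes. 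The first task is to strip away the transcendental input and record each generator as an explicit element $u\in\mathfrak{h}_2$, the $\mathbb{Q}$-linear combination of words dictated by this comparison.

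Next I would set up the group-theoretic side. The configuration consisting of the four fixed points $\{0,1,-1,\infty\}$ together with the moving point $z$ is exactly the one whose prounipotent fundamental groupoid is governed by $\mathfrak{t}_{4,2}$, and $\varphi$, viewed as the level-$2$ associator attached to tangential base points of this family, transports sections of the above connection across the singularities. In these terms the mixed pentagon equation in $U\mathfrak{t}_{4,2}^0(\mathbf{k})$ is precisely the compatibility (cocycle) relation satisfied by this transport around the boundary of the degenerating family. Dualizing the mixed pentagon identity through the pairing $\langle\,\cdot\,,\,\cdot\,\rangle$ therefore yields a family of $\mathbb{Q}$-linear relations among the coefficients of $\varphi$, and the heart of the argument is to exhibit an explicit degeneration map $\rho$ sending each confluence generator $u$ into the span of these relations. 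Geometrically $\rho$ is induced by collapsing the moving point to the relevant tangential base point, so that the two regularized limits in the confluence recipe match the two sides of the pentagon.

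Granting this dictionary the proof closes formally: for a generator $u=\rho(\cdots)$ the quantity $\langle\varphi,u\rangle$ equals a specific evaluation of the dualized mixed pentagon equation, which vanishes because $\varphi\in\mathrm{PENT}(2,\mathbf{k})$; summing over generators gives $\langle\varphi,u\rangle=0$ for all $u\in\mathcal{I}_{\mathrm{CF}}$.

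The step I expect to be the main obstacle is the construction and verification of $\rho$, namely matching the analytic confluence limit of \cite{HS_EulerSum} with the purely formal pentagon identity. The delicate point is the bookkeeping of the regularization constants and the tangential base points: one must check that the shuffle-regularization implicit in the regularized iterated integrals corresponds exactly to the group-likeness normalization of $\varphi$, and that specializing the pentagon variables reproduces each confluence relation on the nose, with no spurious lower-weight terms. I would expect to control this by an induction on the weight of the words, using the explicit bracket relations of $\mathfrak{t}_{4,2}^0$ and the group-like property of $\varphi$ to absorb the correction terms that appear each time the moving point crosses a singularity.
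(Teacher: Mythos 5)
Your high-level strategy is the same as the paper's: pair the mixed pentagon equation, viewed in $U\mathfrak{t}_{4,2}^{0}$, against suitable linear functionals so that the two sides reproduce the two halves $u\mid_{z\to1}$ and $\varphi(u)$ of a confluence generator. But the entire content of the theorem lives inside the step you defer — your map $\rho$ — and your sketch of how to build it omits two ingredients without which the computation does not close. First, once the pentagon equation is rearranged so that each side can be evaluated word-by-word (in the paper, against elements $\psi(g(u))$ of the dual of $U\mathfrak{t}_{4,2}$ built from formal iterated-integral symbols and the explicit embedding $\kappa(z)=\tfrac{z_4-z_3}{z_4+z_3}$, $\kappa(w)=\tfrac{z_3-z_2}{z_3+z_2}$), the terms that appear are \emph{inverses} of group-like series such as $(h^{1,2,3})^{-1}$, $(h^{1,2,34})^{-1}$, $(h^{12,3,4})^{-1}$. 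Handling these is not ``bookkeeping of regularization constants'': the paper must first prove Broadhurst duality for every $h\in\mathrm{PENT}(2,\mathbf{k})$, i.e.\ $h=\exp(-\alpha_h e^1)\mathcal{T}(h)^{-1}\exp(-\alpha_h e^0)$ (Proposition \ref{prop:BroadhurstDuality}), which is a standalone intermediate theorem requiring a second, independent pairing of the pentagon equation (via the embedding $\iota$) together with the algebraic differential formula for $\tau_z$ from \cite{HS_AlgDif}. Nothing in your proposal produces or even anticipates this identity, and without it the right-hand side of your ``dualized pentagon'' cannot be unwound.

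Second, identifying the resulting expression with the combinatorial map $\varphi$ in the definition of $\mathcal{I}_{\mathrm{CF}}$ requires more than the pentagon equation alone in weight-by-weight induction: the regularization $\mathrm{reg}_{z\to 0}$ entering $\varphi$ is defined through the map $\wp$, and the needed identity $\langle h,\mathrm{reg}_{\shuffle}(\varsigma(w))\rangle=\langle h,\wp(w)\rangle$ (equation (\ref{eq:phico_eq_wp})) is only available because $\delta(h)$ satisfies the duality and regularized double shuffle relations (Furusho's theorems, via Proposition \ref{prop:pent_property}(ii),(iii)) and $h$ satisfies the distribution relation (Enriquez--Furusho, Proposition \ref{prop:pent_property}(iv)). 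Your plan to ``absorb the correction terms'' by induction on weight using only the bracket relations of $\mathfrak{t}_{4,2}^{0}$ and group-likeness would fail precisely here: the matching of the $z\to 0$ regularized limit with $\wp$ is a consequence of these deeper relations, not of the Lie-algebra combinatorics. So the proposal, as written, is a correct statement of the target reduction but has a genuine gap at its core, and the gap is not the kind that formal manipulation of the pentagon identity can fill.
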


Since ${\rm GRTM}_{(\bar{1},1)}(N,{\bf k})\subset\mathrm{PENT}(N,{\bf k})$,
Theorem \ref{thm:Gpent-ICF} implies (\ref{eq:state2}) and so Theorem
\ref{thm:main}.

The remainder of this paper is organized as follows. In Section \ref{sec:Cyc-GT-group},
we recall the definition of cyclotomic Grothendieck-Teichm\"{u}ller
group ${\rm GRTM}_{(\bar{1},1)}(N)$, and give a precise definition
of $\mathrm{PENT}(N,{\bf k})$. In Section \ref{sec:confluence},
we review the definition of confluence relations given in \cite{HS_EulerSum}.
In Section \ref{sec:Constr-elem-Homk}, we give an explicit way to
construct elements of the dual space of $U\mathfrak{t}_{4,N}^{0}(\mathbb{Q})$
that will be used in the subsequent sections. In Section \ref{sec:Broadhurst-duality},
we prove Broadhurst-duality relations for elements of ${\rm GRTM}_{(\bar{1},1)}(2)$.
Finally, In Section \ref{sec:proof-of-main}, we prove Theorems \ref{thm:main}
and \ref{thm:Gpent-ICF}.

\section{\label{sec:Cyc-GT-group}The cyclotomic Grothendieck-Teichm\"{u}ller
group}

In this section, we recall the definition of the cyclotomic Grothendieck-Teichm\"{u}ller
group, which was first introduced in \cite{Enr_quasi}. We basically
follow the notation used in \cite{EnrFur}. Let ${\bf k}$ be a commutative
$\mathbb{Q}$-algebra. For $n\geq2$ and $N\geq1$, let $\mathfrak{t}_{n,N}({\bf k})$
be the completed Lie ${\bf k}$-algebra with generators $t^{1j}$
$(2\leq j\leq n)$, $t(a)^{ij}$ $(i\neq j,\,2\leq i,j\leq n,\,a\in\mathbb{Z}/N\mathbb{Z})$
and relations:
\[
t(a)^{ij}=t(-a)^{ji},\quad\left[t(a)^{ij},t(a+b)^{ik}+t(b)^{jk}\right]=0,\quad\left[t^{1i}+t^{1j}+\sum_{c\in\mathbb{Z}/N\mathbb{Z}}t(c)^{ij},t(a)^{ij}\right]=0,
\]
\[
\left[t^{1i},t^{1j}+\sum_{c\in\mathbb{Z}/N\mathbb{Z}}t(c)^{ij}\right]=0,\quad\left[t^{1i},t(a)^{jk}\right]=0,\left[t^{ij}(a),t^{kl}(b)\right]=0
\]
($i,j,k,l\in\{2,\dots,n\}$ are all distinct and $a,b\in\mathbb{Z}/N\mathbb{Z}$).
Let $\mathfrak{t}_{n,N}^{0}({\bf k})$ be the completed Lie subalgebra
of $t_{n,N}({\bf k})$ with the same generators except $t^{1n}$ and
the relations as $\mathfrak{t}_{n,N}$ (except those involving $t^{1n}$).
Fix a generator $\zeta_{N}$ of $\mu_{N}$. Note that $\mathfrak{t}_{3,N}^{0}({\bf k})$
is the completed free ${\bf k}$-Lie algebra of rank $N+1$ with generators
$e^{0}\coloneqq t^{12}$ and $e^{\zeta_{N}^{a}}\coloneqq t(a)^{23}$
$(a\in\mathbb{Z}/N\mathbb{Z})$. For a partially defined map $f:\{1,\dots,m\}$
to $\{1,\dots,n\}$ satisfying $f(1)=1$, define the continuous Lie
${\bf k}$-algebra morphism $t_{n,N}\to t_{m,N}:x\mapsto x^{f}=x^{f^{-1}(1),\dots,f^{-1}(n)}$
by
\[
(t(a)^{ij})^{f}=\sum_{i'\in f^{-1}(i),j'\in f^{-1}(j)}t(a)^{i'j'}
\]
and
\[
(t^{1j})^{f}=\sum_{j'\in f^{-1}(j)}t^{1j'}+\frac{1}{2}\sum_{j',j''\in f^{-1}(j)}\sum_{c\in\mathbb{Z}/N\mathbb{Z}}t(c)^{j'j''}+\sum_{i'\in f^{-1}(1)\setminus\{1\},j'\in f^{-1}(j)}t(c)^{i'j'}.
\]
Hereafter, we simply write $t^{ij}$ for the generator $t(0)^{ij}$
of $\mathfrak{t}_{n,1}({\bf k})$ if there is no risk of confusion.
For a partially defined map $g:\{2,\dots,m\}\to\{1,\dots,n\}$, define
the continuous Lie ${\bf k}$-algebra morphism $t_{n,1}\to t_{m,N}:x\mapsto x^{g}=x^{g^{-1}(1),\dots,g^{-1}(n)}$
by
\[
(t^{ij})^{g}=\sum_{i'\in g^{-1}(i),j'\in g^{-1}(j)}t(0)^{i'j'}\qquad(i\in\{1,\dots,n\},j\in\{2,\dots,n\}).
\]
We also define the continuous Lie ${\bf k}$-algebra homomorphism
$\delta:\mathfrak{t}_{n,N}^{0}({\bf k})\to\mathfrak{t}_{n,1}^{0}({\bf k})$
by $t^{1j}\mapsto t^{1j}$ and $t^{ij}(a)\mapsto\delta_{a,0}t^{ij}$.
\begin{defn}
We say that the pair $(g,h)\in\exp\mathfrak{t}_{3,1}^{0}({\bf k})\times\exp\mathfrak{t}_{3,N}^{0}({\bf k})$
satisfies the \emph{mixed pentagon equation} if
\[
h^{1,2,34}h^{12,3,4}=g^{2,3,4}h^{1,23,4}h^{1,2,3}
\]
holds in $\exp\mathfrak{t}_{4,N}^{0}$.
\end{defn}

Furthermore, we say that $h\in\exp\mathfrak{t}_{3,1}^{0}({\bf k})$
satisfies the pentagon equation if $(h,h)$ satisfies the pentagon
equation. This coincides the original definition of the pentagon equation.
\begin{defn}[(The prounipotent part of) the Grothendieck-Teichm\"{u}ller group]
We define ${\rm GRT}_{1}({\bf k})$ as the set of $g\in\exp\mathfrak{t}_{3,1}^{0}({\bf k})$
satisfying the following conditions:
\begin{enumerate}
\item $g$ satisfies the mixed pentagon equation,
\item the coefficient of $e^{0}e^{1}$ in $g$ is zero.
\end{enumerate}
\end{defn}

Now, we define ${\rm GRTM}_{(\bar{1},1)}(N,{\bf k})$.

\begin{defn}[(The prounipotent part of) the cyclotomic Grothendieck-Teichm\"{u}ller
group]
We define ${\rm GRTM}_{(\bar{1},1)}(N,{\bf k})$ as the set of $h\in\exp\mathfrak{t}_{3,N}^{0}({\bf k})$
satisfying the following conditions:
\begin{enumerate}
\item There exists $g\in{\rm GRT}_{1}({\bf k})$ such that $(g,h)$ satisfies
the mixed pentagon equation,
\item the coefficient of $e^{1}$ in $h$ is zero,
\item $\tau_{\zeta_{N}}(h)^{-1}\cdot\tau_{\zeta_{N}}(\sigma(h))\cdot\sigma(h)^{-1}\cdot h=1$,
\item $e^{0}+\sum_{a\in\mathbb{Z}/N\mathbb{Z}}{\rm Ad}(\tau_{a}h^{-1})(e^{\zeta_{N}^{a}})+{\rm Ad}\Big(h^{-1}\cdot\sigma(h)\Big)(e^{\infty})=0$,
\end{enumerate}
where $e^{0}+\sum_{\eta\in\mu_{N}}e^{\eta}+e^{\infty}=0$ and $\tau_{\eta}$
($\eta\in\mu_{N}$) (resp. $\sigma$) is the automorphism defined
by $e^{a}\mapsto e^{\eta a}$ (resp. $e^{a}\mapsto e^{a^{-1}}$) for
$a\in\{0\}\cup\mu_{N}$.

\end{defn}

\begin{rem}
The element $g\in{\rm GRT}_{1}({\bf k})$ in (1) is uniquely determined
from $h$. In fact, $g$ is equal to $\delta(h)$ since the $h^{1,2,34}h^{12,3,4}$
and $g^{2,3,4}h^{1,23,4}h^{1,2,3}$ are mapped to $\delta(h)$ and
$g$ respectively by the map $t^{1j}\mapsto0$, $t^{ij}(a)\mapsto\delta_{a,0}t^{i-1,j-1}$
by (2).
\end{rem}

\begin{rem}
In \cite{EnrFur}, ${\rm GRTM}_{(\bar{1},1)}(N,{\bf k})$ is defined
as the set of pairs $(g,h)\in\exp\mathfrak{t}_{3,1}^{0}({\bf k})\times\exp\mathfrak{t}_{3,N}^{0}({\bf k})$
satisfying the same condition.
\end{rem}

\begin{defn}
We define $\mathrm{PENT}(N,{\bf k})$ as the set of $h\in\exp\mathfrak{t}_{3,N}^{0}({\bf k})$
such that $(\delta(h),h)$ satisfies the mixed pentagon equation.
\end{defn}

The functors ${\bf k}\mapsto\exp\mathfrak{t}_{3,N}^{0}({\bf k})$,
${\bf k}\mapsto{\rm GRT}_{1}({\bf k})$, ${\bf k}\mapsto{\rm GRTM}_{(\bar{1},1)}(N,{\bf k})$
and ${\bf k}\mapsto\mathrm{PENT}(N,{\bf k})$ from the category of
commutative $\mathbb{Q}$-algebras to the category of sets are representable.
We denote the affine $\mathbb{Q}$-schemes corresponding these functors
by $\exp\mathfrak{t}_{3,N}^{0}$, ${\rm GRT}_{1}$, ${\rm GRTM}_{(\bar{1},1)}(N)$
and $\mathrm{PENT}(N)$, respectively.
\begin{prop}
\label{prop:pent_property}Let $h=h(e^{0},e^{\zeta_{N}^{0}},\dots,e^{\zeta_{N}^{N-1}})\in{\rm PENT}(N,{\bf k})$.
Then,
\begin{enumerate}[label=\textup{(\roman{enumi})}]
\item The coefficients of $e^{0}$ and $e^{1}$ in $h$ are zero,
\item $g=\delta(h)$ satisfies $g^{1,2,34}g^{12,3,4}=g^{2,3,4}g^{1,23,4}g^{1,2,3}$,
\item $g=\delta(h)$ satisfies the duality relation $g(e_{0},e_{1})g(e_{1},e_{0})=1$
and regularized double shuffle relation (see \cite{Fur_dsh}).
\item $h$ satisfies the (special case of) distribution relation 
\[
h(Ne^{0},e^{1},\dots,e^{1})=\exp(\rho e^{1})h(e^{0},e^{1},0,\dots,0)
\]
where $\rho$ is the coefficient of $e^{1}$ in $h(Ne^{0},e^{1},\dots,e^{1})-h(e^{0},e^{1},0,\dots,0)$.
\end{enumerate}
\end{prop}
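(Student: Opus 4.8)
The only hypothesis is that $(\delta(h),h)$ solves the mixed pentagon equation
\[
h^{1,2,34}h^{12,3,4}=g^{2,3,4}h^{1,23,4}h^{1,2,3}\qquad(g:=\delta(h))
\]
in $\exp\mathfrak{t}_{4,N}^0({\bf k})$, so every assertion has to be extracted from it. The plan is to read (i) off the degree one part of the equation, to obtain (ii) by pushing the equation forward along $\delta$, to deduce (iii) from Furusho's theorems on the pentagon equation, and to prove (iv) by pushing the equation forward along a second morphism that merges the root of unity generators. I use freely that $\mathfrak{t}_{3,N}^0$ is free on $e^0=t^{12}$ and $e^{\zeta_N^a}=t(a)^{23}$, so that $g=h(e^0,e^1,0,\dots,0)$.

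For (i) I would expand the equation to linear order. Writing the degree one part of $\log h$ as $\alpha\, t^{12}+\sum_a\beta_a\, t(a)^{23}$, every superscripted factor has an explicitly computable linear term (for instance $(t^{12})^{12,3,4}=t^{13}+\sum_c t(c)^{23}$ and $(t(a)^{23})^{1,2,34}=t(a)^{23}+t(a)^{24}$), while $g^{2,3,4}$ contributes $\alpha\, t(0)^{23}+\beta_0\, t(0)^{34}$. Comparing the coefficients of $t^{12}$ on the two sides gives $\alpha=2\alpha$, and comparing the coefficients of $t(0)^{34}$ gives $\beta_0=2\beta_0$; hence $\alpha=\beta_0=0$, which is the vanishing of the coefficients of $e^0$ and $e^1$.

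For (ii) the point is that $\delta\colon\mathfrak{t}_{4,N}^0\to\mathfrak{t}_{4,1}^0$ commutes with every superscript operation appearing in the equation. On $t(a)^{ij}$ this is immediate from $\delta((t(a)^{ij})^f)=\delta_{a,0}(t^{ij})^f$; on $t^{1j}$ it follows because applying $\delta$ to the defining formula of $(t^{1j})^f$ keeps exactly the $c=0$ summand of each inner sum and so reproduces the level one formula, and the same bookkeeping sends $\delta(g^{2,3,4})$ to the level one $g^{2,3,4}$. As $\delta$ is a Lie morphism, hence multiplicative on group-like series, applying it to the mixed pentagon yields $g^{1,2,34}g^{12,3,4}=g^{2,3,4}g^{1,23,4}g^{1,2,3}$. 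For (iii), by (i) the series $g$ has no linear term and by (ii) it is a group-like pentagon solution, so Furusho's theorem that the pentagon equation implies both hexagon equations makes $g$ an associator; the duality $g(e_0,e_1)g(e_1,e_0)=1$ and the regularized double shuffle relation are then standard properties of associators, the latter being the main result of \cite{Fur_dsh}.

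The substantive part is (iv). First I would introduce the merging morphism $\nu\colon\mathfrak{t}_{n,N}\to\mathfrak{t}_{n,1}$, $t(a)^{ij}\mapsto t^{ij}$, $t^{1j}\mapsto N\, t^{1j}$, and verify that it is a well-defined Lie morphism: the scalar $N$ is forced by, and exactly matches, the occurrences of $\sum_c t(c)^{ij}$ in the relations (e.g. $[t^{1i},t^{1j}+\sum_c t(c)^{ij}]=0$ is sent to $N^2[t^{1i},t^{1j}+t^{ij}]=0$). As for $\delta$, one checks that $\nu$ commutes with the superscript operations; on $\mathfrak{t}_{3,N}^0$ it is the substitution $e^0\mapsto Ne^0$, $e^{\zeta_N^a}\mapsto e^1$, so $\nu(h)=h(Ne^0,e^1,\dots,e^1)$, while $\nu(g^{2,3,4})$ is again the level one $g^{2,3,4}$. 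Applying $\nu$ to the mixed pentagon then gives
\[
\nu(h)^{1,2,34}\nu(h)^{12,3,4}=g^{2,3,4}\nu(h)^{1,23,4}\nu(h)^{1,2,3},
\]
so $\nu(h)$ solves the pentagon equation with the very same factor $g^{2,3,4}$ that $g$ does in (ii). Both $\nu(h)$ and $g$ are group-like in ${\bf k}\langle\langle e^0,e^1\rangle\rangle$, and by (i) their linear parts differ only by $\rho\, e^1$, where $\rho$ is the coefficient of $e^1$ in $\nu(h)-g$. Comparing the two pentagon identities degree by degree should force $\nu(h)\, g^{-1}=\exp(\rho\, e^1)$, which is the distribution relation. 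I expect this comparison to be the main obstacle: one must show that two pentagon solutions sharing a common $g^{2,3,4}$ differ by precisely an element of the one-parameter family $\exp(\ast\, e^1)$, excluding the $e^0$-direction and all deeper ones. I would handle it inductively, using the linearized pentagon to confine the degree $n$ part of $\log(\nu(h)\, g^{-1})$ to the line spanned by $e^1$ and propagating the single constant $\rho$ from degree one through all degrees; checking that no other primitive directions can appear is where the real difficulty lies.
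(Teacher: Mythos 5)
Your (i)--(iii) essentially reproduce the paper's own proof: (i) is the identical comparison of the coefficients of $t^{12}$ and $t(0)^{34}$ on the two sides of the mixed pentagon equation, and (ii)--(iii) proceed, as in the paper, by pushing the equation forward along $\delta$ and then quoting \cite{Fur_hex} and \cite{Fur_dsh}. One step of the paper's proof of (ii) is missing from yours: applying $\delta$ to $h^{1,23,4}$ yields $g(t^{12}+t^{13}+t^{23},\,t^{24}+t^{34})$, and to land on the pentagon equation in the normalization to which Furusho's theorems apply, where $g^{1,23,4}=g(t^{12}+t^{13},\,t^{24}+t^{34})$, one must still argue that the extra $t^{23}$ can be dropped; this uses $\langle g,e_{0}\rangle=0$ from (i) together with $[t^{23},t^{12}+t^{13}]=[t^{23},t^{24}+t^{34}]=0$. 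That is a small, fixable omission, but it is needed before (iii) can be cited.

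The genuine gap is in (iv), and you flagged it yourself. The paper does not prove (iv); it cites \cite[Proposition 2.9 (2)]{EnrFur}. Your alternative route is reasonable: the merging morphism $\nu$ (with $t(a)^{ij}\mapsto t^{ij}$, $t^{1j}\mapsto Nt^{1j}$) is indeed a well-defined Lie morphism commuting with the insertion maps, so both $\nu(h)=h(Ne^{0},e^{1},\dots,e^{1})$ and $g=\delta(h)$ solve the same equation $k^{1,2,34}k^{12,3,4}=g^{2,3,4}k^{1,23,4}k^{1,2,3}$ with the same fixed $g^{2,3,4}$. But the conclusion then rests entirely on the rigidity claim that two group-like solutions of this equation differ exactly by $\exp(\rho e^{1})$, and this is asserted (``should force''), not proven; as written, (iv) is not established. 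The claim is in fact true, and your induction can be closed: if two solutions agree below degree $n\geq2$, their degree-$n$ discrepancy $v$ is a Lie element satisfying $v^{1,2,34}+v^{12,3,4}=v^{1,23,4}+v^{1,2,3}$ (the $\mathrm{Ad}$-corrections enter only in higher degree, and no $v^{2,3,4}$-term appears precisely because $g$ is fixed -- this is why rigidity holds here, unlike for the usual pentagon cocycle whose solutions form $\mathfrak{grt}$); applying the strand-deletion morphism $\mathfrak{t}_{4,1}\to\mathfrak{t}_{3,1}$, $t^{1j}\mapsto0$, $t^{ij}\mapsto t^{(i-1)(j-1)}$ for $2\le i<j$, kills $v^{1,2,34}$, $v^{1,2,3}$ and $v^{1,23,4}$ (after deletion their first argument vanishes, or the two arguments commute) and sends $v^{12,3,4}$ to $v(t^{12},t^{23})$, forcing $v=0$ by freeness of $\mathfrak{t}_{3,1}^{0}$. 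One also needs, to normalize degree one, that $\exp(\rho e^{1})\cdot k$ is again a solution, which follows from the centrality of $t^{23}+t^{24}+t^{34}$ with respect to all five factors. With these two lemmas supplied, your argument would give a self-contained proof of (iv), genuinely different from (and independent of) the citation the paper relies on; without them it has a hole exactly where you suspected.
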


\begin{proof}
By definition,
\begin{equation}
h^{1,2,34}h^{12,3,4}=\delta(h)^{2,3,4}h^{1,23,4}h^{1,2,3}.\label{eq:pent}
\end{equation}
The coefficient of $t^{12}$ in $h^{1,2,34}h^{12,3,4}$ (resp. $\delta(h)^{2,3,4}h^{1,23,4}h^{1,2,3}$)
is equal to the $\langle h,e_{0}\rangle$ (resp. $2\langle h,e_{0}\rangle$).
Thus $\langle h,e_{0}\rangle=0$. The coefficient of $t^{34}(0)$
in $h^{1,2,34}h^{12,3,4}$ (resp. $\delta(h)^{2,3,4}h^{1,23,4}h^{1,2,3}$)
is equal to the $\langle h,e_{1}\rangle$ (resp. $2\langle h,e_{1}\rangle$).
Thus $\langle h,e_{1}\rangle=0$. Hence (i) is proved. Put $g=\delta(h)$.
By applying the map
\[
\mathfrak{t}_{n,N}\to\mathfrak{t}_{n,1}\quad;\quad t^{1j}\mapsto t^{1j},\ t^{ij}(a)\mapsto\delta_{a,0}t^{ij}
\]
to \ref{eq:pent}, we have
\[
g^{1,2,34}g^{12,3,4}=g^{2,3,4}g(t^{12}+t^{13}+t^{23},t^{24}+t^{34})g^{1,2,3}.
\]
Furthermore, since $\langle g,e_{0}\rangle=0$, $g\in\exp\mathfrak{t}_{3,1}^{0}({\bf k})$,
and $[t^{23},t^{12}+t^{13}]=[t^{23},t^{24}+t^{34}]=0$, we have
\[
g(t^{12}+t^{13}+t^{23},t^{24}+t^{34})=g^{1,23,4}.
\]
Thus (ii) is also proved. Then, (iii) follows from (ii) and the results
in \cite{Fur_hex} and \cite{Fur_dsh}. Finally (iv) is proved in
\cite[Proposition 2.9 (2)]{EnrFur}
\end{proof}

\section{\label{sec:confluence}Review of confluence relation}

In this section, we recall the definition of the set of confluence
relations $\mathcal{I}_{{\rm CF}}$ in \cite{HS_EulerSum}. We basically
follow the same notation as \cite{HS_EulerSum}, except that we use
$\mathbb{Q}$-modules here instead of $\mathbb{Z}$-modules. For $S\subset\{0,1,-1\}$,
we denote by $\mathcal{A}(S)$ the free $\mathbb{Q}$-algebra generated
by formal symbols $\{e_{a}\mid a\in S\}$. Furthermore, we define
the subspace $\mathcal{A}^{0}(S)\subset\mathcal{A}(S)$ and $\mathcal{C}\subset\mathcal{A}(\{0,1\})$
by 
\begin{align*}
\mathcal{A}^{0}(S) & \coloneqq\mathbb{Q}\oplus\bigoplus_{k=1}^{\infty}\bigoplus_{\substack{a_{1},\dots,a_{k}\in S\\
a_{1}\neq0,a_{k}\neq1
}
}\mathbb{Q}e_{a_{1}}\cdots e_{a_{k}},\\
\mathcal{C} & \coloneqq\mathbb{Q}\oplus e_{1}\mathbb{Q}\left\langle e_{0},e_{1}\right\rangle .
\end{align*}
Put 
\[
\mathcal{B}\coloneqq\sum_{\substack{a_{1},\dots,a_{k}\in\{0,-1,z,-z^{2}\}\\
a_{1}\neq0,a_{k}\neq z
}
}\mathbb{Q}e_{a_{1}}\cdots e_{a_{k}}.
\]
Let ${\rm reg}_{\shuffle}:\mathcal{A}(\{0,1\})\to\mathcal{A}^{0}(\{0,1\})$
be the unique $\shuffle$-morphism such that ${\rm reg}_{\shuffle}(e_{0})={\rm reg}_{\shuffle}(e_{1})=0$
and ${\rm reg}_{\shuffle}(u)=u$ for $u\in\mathcal{A}^{0}(\{0,1\})$.
Define an automorphism $\varrho$ of $\mathcal{A}(\{0,1\})$ by $\varrho(e_{0})=e_{0}-e_{1}$
and $\varrho(e_{1})=-e_{1}$. Furthermore, define a $\mathbb{Q}$-algebra
homomorphism ${\rm dist}:\mathcal{A}(\{0,1\})\to\mathcal{A}(\{0,1,-1\})$
by ${\rm dist}(e_{0})=2e_{0}$ and ${\rm dist}(e_{1})=e_{1}+e_{-1}$.
\begin{defn}
For $d\geq0$ and $\Bbbk=(k_{1},\dots,k_{d})\in\mathbb{Z}_{\geq1}^{d}$,
let $\theta(\Bbbk)$, $\theta_{1}(\Bbbk)$, $\theta'(\Bbbk)$ and
${\rm w}^{\star}(\Bbbk)$ be elements of $\mathcal{A}(\{0,1,-1\})$
defined by

\begin{align*}
\theta(k_{1},\dots,k_{d}) & \coloneqq\begin{cases}
-\sum_{j=1}^{d}e_{1}e_{0}^{k_{j}-1}\cdots e_{1}e_{0}^{k_{1}-1}\shuffle\theta(k_{j+1},\dots,k_{d}) & d>0\\
1 & d=0,
\end{cases}
\end{align*}
\[
\theta'(\Bbbk)\coloneqq{\rm dist}\circ{\rm reg}_{\shuffle}\circ\varsigma\circ\theta(\Bbbk),
\]
\[
\theta_{1}(\Bbbk)\coloneqq\theta'(\Bbbk)+2\sum_{\substack{\Bbbk=(\Bbbk',\{1\}^{2m})\\
m\geq1
}
}e_{-1}e_{0}^{2m-1}\shuffle\theta'(\Bbbk'),
\]
\[
{\rm w}^{\star}(k_{1},\dots,k_{d})\coloneqq\begin{cases}
-e_{1}e_{0}^{k_{1}-1}(e_{0}-e_{1})e_{0}^{k_{2}-1}\cdots(e_{0}-e_{1})e_{0}^{k_{d}-1} & d>0\\
1 & d=0.
\end{cases}
\]
Note that $\{\varrho({\rm w}^{\star}(\Bbbk))\}$ is a basis of $\mathcal{C}$.
Now, by using the above notations, we define a $\mathbb{Q}$-linear
map $\wp:\mathcal{C}\to\mathcal{A}^{0}(\{0,1,-1\})$ by
\[
\wp(\varrho({\rm w}^{\star}(k_{1},\dots,k_{d})))\coloneqq\theta_{1}(k_{1},\dots,k_{d}).
\]

In \cite[Lemma 2.17]{HS_EulerSum}, the author proved that
\begin{equation}
L^{\mathfrak{m}}({\rm reg}_{\shuffle}(\varsigma(w))=L^{\mathfrak{m}}(\wp(w))\label{eq:Lm_eq_wp}
\end{equation}
for $w\in\mathcal{C}$. The properties of $L^{\mathfrak{m}}$ used
in the proof of \cite[Lemma 2.17]{HS_EulerSum} is the regularized
double shuffle relations and duality relation for $L^{\mathfrak{m}}\mid_{\mathcal{A}^{0}(\{0,1\})}$,
and (the special case of) the distribution relation $(1-2^{1-2m})L^{\mathfrak{m}}(e_{1}e_{0}^{2m-1})=-L^{\mathfrak{m}}(e_{-1}e_{0}^{2m-1})$.
These properties are also satisfied by 
\[
Z_{\varphi}:\mathcal{A}^{0}(\{0,1,-1\})\to{\bf k}\quad;\quad u\mapsto\langle\varphi,u\rangle
\]
for $\varphi\in{\rm PENT}(2,{\bf k})$ by Proposition \ref{prop:pent_property}.
Thus (\ref{eq:Lm_eq_wp}) also holds for $Z_{\varphi}$, i.e., we
have
\begin{equation}
\langle\varphi,{\rm reg}_{\shuffle}(\varsigma(w)\rangle=\langle\varphi,\wp(w)\rangle\label{eq:phico_eq_wp}
\end{equation}
for $\varphi\in{\rm PENT}(2,{\bf k})$ and $w\in\mathcal{C}$.
\end{defn}

Define ${\rm reg}_{z\to0}:\mathcal{B}\to\mathcal{A}^{0}(\{0,1,-1\})$
by
\[
\mathcal{B}\xrightarrow{e_{-1}\mapsto0}\mathcal{B}'\xrightarrow[\simeq]{u\shuffle v\mapsto u\otimes v}\mathcal{B}''\otimes\mathcal{B}'''\xrightarrow{\overline{{\rm reg}}_{z\to0}\otimes\epsilon}\mathcal{A}^{0}(\{0,1\})\otimes\mathcal{C}\xrightarrow{u\otimes v\mapsto{\rm dist}(u)\shuffle\wp(v)}\mathcal{A}^{0}(\{0,1,-1\}).
\]
where $\mathcal{B}'$, $\mathcal{B}''$, $\mathcal{B}'''$ and $\mathcal{C}$
are $\mathbb{Q}$-modules defined by

\begin{align*}
\mathcal{B}' & \coloneqq\mathcal{B}\cap\mathbb{Q}\left\langle e_{0},e_{z},e_{-z^{2}}\right\rangle ,\\
\mathcal{B}'' & \coloneqq\mathcal{B}'\cap\left(\mathbb{Q}\oplus e_{z}\mathbb{Q}\left\langle e_{0},e_{z},e_{-z^{2}}\right\rangle \right),\\
\mathcal{B}''' & \coloneqq\mathcal{B}'\cap\mathbb{Q}\left\langle e_{0},e_{-z^{2}}\right\rangle ,
\end{align*}
$\overline{{\rm reg}}_{z\to0}$ is the ring homomorphism from $\mathbb{Q}\langle e_{0},e_{z},e_{-z^{2}}\rangle$
to $\mathcal{A}(\{0,1\})$ defined by $\overline{{\rm reg}}_{z\to0}(e_{0})=\overline{{\rm reg}}_{z\to0}(e_{-z^{2}})=e_{0}$
and $\overline{{\rm reg}}_{z\to0}(e_{z})=e_{1}$, and $\epsilon$
is the ring homomorphism from $\mathbb{Q}\langle e_{0},e_{-z^{2}}\rangle$
to $\mathcal{A}(\{0,1\})$ defined by $\epsilon(e_{0})=e_{0}$ and
$\epsilon(e_{-z^{2}})=e_{1}$.

For $c\in\{0,1,-1\}$, define $\partial_{c}:\mathcal{B}\to\mathcal{B}$
by
\[
\partial_{c}(e_{a_{1}}\cdots e_{a_{k}})=\sum_{i=1}^{k}\left({\rm ord}_{z=c}(a_{i}-a_{i+1})-{\rm ord}_{z=c}(a_{i}-a_{i-1})\right)e_{a_{1}}\cdots e_{a_{i-1}}e_{a_{i+1}}\cdots e_{a_{k}}
\]
where $a_{0}=0$, $a_{k+1}=z$, and ${\rm ord}_{z=c}(0)=0$. Define
$\varphi:\mathcal{B}\to\mathcal{A}^{0}(\{0,1,-1\})$ by
\[
\varphi(u)=\sum_{l=0}^{\infty}\sum_{c_{1},\dots,c_{l}\in\{0,1,-1\}}{\rm reg}_{z\to0}(\partial_{c_{1}}\cdots\partial_{c_{l}}u)\shuffle{\rm reg}_{\shuffle}(e_{c_{1}}\cdots e_{c_{l}}).
\]
Now, $\mathcal{I}_{{\rm CF}}\subset\mathcal{A}^{0}(\{0,1,-1\})$ is
defined by
\[
\mathcal{I}_{{\rm CF}}\coloneqq\{u\mid_{z\to1}-\varphi(u)\mid u\in\mathcal{B}\}
\]
where the map $u\mapsto u\mid_{z\to1}$ is the ring morphism from
$\mathbb{Q}\left\langle e_{0},e_{-1},e_{z},e_{-z^{2}}\right\rangle $
to $\mathcal{A}(\{0,1,-1\})$ defined by $e_{0}\mid_{z\to1}=e_{0}$,
$e_{-1}\mid_{z\to1}=e_{-z^{2}}\mid_{z\to1}=e_{-1}$, and $e_{z}\mid_{z\to1}=e_{1}$.
The following is the main theorem of \cite{HS_EulerSum}.
\begin{thm}
\label{thm:kerLm=00003DICF}Let $L^{\mathfrak{m}}:\mathcal{A}^{0}(\{0,1,-1\})\to\mathcal{H}_{2}$
be the $\mathbb{Q}$-linear map defined by $L^{\mathfrak{m}}(e_{a_{1}}\cdots e_{a_{k}})=I^{\mathfrak{m}}(0;a_{1},\dots,a_{k};1)$.
Then
\[
\ker(L^{\mathfrak{m}})=\mathcal{I}_{{\rm CF}}.
\]
\end{thm}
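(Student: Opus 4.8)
The plan is to establish the two inclusions $\mathcal{I}_{{\rm CF}}\subseteq\ker L^{\mathfrak{m}}$ and $\ker L^{\mathfrak{m}}\subseteq\mathcal{I}_{{\rm CF}}$ separately. Once the first is known, $L^{\mathfrak{m}}$ factors through a graded surjection $\mathcal{A}^{0}(\{0,1,-1\})/\mathcal{I}_{{\rm CF}}\twoheadrightarrow{\rm im}(L^{\mathfrak{m}})$, so the second inclusion becomes equivalent to the matching of graded dimensions $\dim_{\mathbb{Q}}(\mathcal{A}^{0}(\{0,1,-1\})/\mathcal{I}_{{\rm CF}})_{w}=\dim_{\mathbb{Q}}{\rm im}(L^{\mathfrak{m}})_{w}$ in every weight $w$; since the map is onto, it suffices to bound the left-hand side from above by the right-hand side. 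This splits the theorem into (a) verifying that each confluence element is a genuine relation between motivic iterated integrals, and (b) an exact dimension count.

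For (a), I would realize a word $u\in\mathcal{B}$ as a one-parameter family of motivic iterated integrals $I^{\mathfrak{m}}(0;a_{1},\dots,a_{k};z)$ with singularities in $\{0,-1,z,-z^{2}\}$ and endpoints $0$ and $z$, viewed as motivic periods of the associated fibration over the $z$-line. The operator $\partial_{c}$ is exactly the residue recording how the singularities collide at $z=c$, so this family satisfies a Knizhnik--Zamolodchikov-type differential equation in $z$ generated by the $\partial_{c}$. The map $\varphi$ then encodes the regularized parallel transport describing the asymptotics at the tangential base point $z\to0$ --- this is why it is assembled from ${\rm reg}_{z\to0}$, $\wp$, and the shuffle-regularized correction factors ${\rm reg}_{\shuffle}(e_{c_{1}}\cdots e_{c_{l}})$ --- whereas $u\mid_{z\to1}$ is the specialization of the same family at $z=1$, where $e_{z}\mapsto e_{1}$ and $e_{-z^{2}},e_{-1}\mapsto e_{-1}$ turn it into an honest iterated integral from $0$ to $1$. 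To lift the resulting identity of periods to the motivic level, I would induct on the weight using Goncharov's coaction: applying the infinitesimal coaction operators $D_{r}$ for $r<w$ to $\xi\coloneqq(u\mid_{z\to1})-\varphi(u)$ and using that each $D_{r}$ is computed by the same collision operators $\partial_{c}$, the inductive hypothesis (lower-weight confluence relations, together with the regularized double shuffle, duality and distribution inputs recorded in \eqref{eq:Lm_eq_wp}) forces $D_{r}\xi=0$ for all $r$; hence $\xi$ is coaction-primitive, and a single weight-$w$ period or parity computation gives $\xi=0$.

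For (b) I would invoke Deligne's determination of $\mathcal{MTM}(\mathbb{Z}[1/2])$, by which the weight-$w$ part of ${\rm im}(L^{\mathfrak{m}})$ has dimension $d_{w}$ with generating series $\sum_{w\geq0}d_{w}t^{w}=(1-t-t^{2})^{-1}$. It then remains to produce, inside $\mathcal{A}^{0}(\{0,1,-1\})/\mathcal{I}_{{\rm CF}}$, an explicit spanning family of cardinality $d_{w}$ in each weight: concretely, I would read the confluence relations as rewriting rules and use them to reduce every admissible word to a fixed basis (a level-two analogue of Hoffman's basis), then bound the size of that basis by $d_{w}$. Combining this upper bound with the surjection from (a) yields the equality of dimensions, hence $\ker L^{\mathfrak{m}}=\mathcal{I}_{{\rm CF}}$.

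The step I expect to be the main obstacle is the upper bound in (b): proving that the confluence relations are abundant enough to collapse the quotient down to exactly the predicted dimension requires an effective reduction procedure together with a matching count, and such combinatorial completeness is usually far harder than checking that the relations hold. A secondary difficulty lies in (a), namely verifying that the collision operators $\partial_{c}$ reproduce the infinitesimal coaction $D_{r}$ once all the regularizations are taken into account, so that the induction closes; here the role of \eqref{eq:Lm_eq_wp} is precisely to furnish the double shuffle, duality and distribution relations that make $\varphi$ compatible with the coaction.
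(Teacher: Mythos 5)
There is a preliminary point that dominates everything else: this paper contains \emph{no proof} of Theorem \ref{thm:kerLm=00003DICF} against which your attempt can be compared. The theorem is stated verbatim as ``the main theorem of \cite{HS_EulerSum}'' and is used in Section \ref{sec:proof-of-main} purely as a black box; the only glimpse the present paper gives of its proof is the remark that \cite[Lemma 2.17]{HS_EulerSum}, recorded here as \eqref{eq:Lm_eq_wp}, relies only on the regularized double shuffle, duality and distribution relations. So your proposal is in effect an attempt to reconstruct the external paper \cite{HS_EulerSum}; whether it follows the same route as that paper cannot be adjudicated from this source, and it can only be judged as a free-standing argument.

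Judged that way, it is a roadmap rather than a proof, and the two steps you defer are not technical details but the entire content of the theorem. (i) In part (a), the claim that the collision operators $\partial_{c}$ compute the infinitesimal coaction operators $D_{r}$ ``once all the regularizations are taken into account'' is precisely what must be proved: Goncharov's coproduct produces iterated integrals between \emph{arbitrary} pairs of points of $\{0,-1,z,-z^{2}\}$, not just integrals based at $0$ and $z$, and reconciling those terms with ${\rm reg}_{z\to0}$, $\wp$ and ${\rm reg}_{\shuffle}$ (this is exactly where the double shuffle, duality and distribution inputs of \eqref{eq:Lm_eq_wp} enter) is the hard technical core; your sketch asserts this compatibility rather than establishing it. Moreover the endgame ``$\xi$ is coaction-primitive, and a single weight-$w$ period or parity computation gives $\xi=0$'' cannot be a literal period computation, since injectivity of the period map is conjectural: one must first identify the motivic primitive subspace in weight $w$ (rational multiples of $(2\pi i)^{\mathfrak{m}w}$, together with $\zeta^{\mathfrak{m}}(w)$ in odd weight) and only then use the fact that Euler-sum periods are real --- fixable, but a gap as written. (ii) In part (b), granting Deligne's theorem, your target dimension $d_{w}$ with generating series $(1-t-t^{2})^{-1}$ is correct, but the required upper bound $\dim_{\mathbb{Q}}\bigl(\mathcal{A}^{0}(\{0,1,-1\})/\mathcal{I}_{{\rm CF}}\bigr)_{w}\le d_{w}$ --- an effective rewriting procedure, using only confluence relations, onto a Fibonacci-sized spanning set --- is missing entirely, and you yourself flag it as the main obstacle. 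A proposal whose two pivotal steps are both open is a plausible plan, and broadly of the expected shape, but it is not a proof of the theorem.
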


\section{\label{sec:Constr-elem-Homk}The dual vector space of $U\mathfrak{t}_{4,N}(\mathbb{Q})$}

Let $U\mathfrak{t}_{n,N}({\bf k})$ be the completion of the universal
enveloping algebra of $\mathfrak{t}_{n,N}({\bf k})$. Recall that
the mixed pentagon equation is formulated as an equality in $\exp\mathfrak{t}_{4,N}({\bf k})\subset U\mathfrak{t}_{4,N}({\bf k})=U\mathfrak{t}_{4,N}(\mathbb{Q})\hat{\otimes}{\bf k}$.
Let $R_{n,N}$ be the ring of formal power series over $\mathbb{Q}$
in free variables $\tilde{t}^{ij}$ $(2\leq j\leq n)$ and $\tilde{t}(a)^{ij}$
$(i\neq j,\,2\leq i,j\leq n,\,a\in\mathbb{Z}/N\mathbb{Z})$. Then
we can regard $U\mathfrak{t}_{n,N}(\mathbb{Q})$ as a quotient of
$R_{n,N}$ by a surjective map
\[
R_{n,N}\to U\mathfrak{t}_{n,N}(\mathbb{Q})\quad;\quad\tilde{t}^{1j}\mapsto t^{1j},\ \tilde{t}(a)^{ij}\mapsto t(a)^{ij}.
\]
By an obvious way, the (topological) dual vector space ${\rm Hom}_{\mathbb{Q}}^{{\rm cont}}(R_{n,N}(\mathbb{Q}),\mathbb{Q})$
of $R_{n,N}(\mathbb{Q})$ can be regarded as the non-commutative polynomial
ring over $\mathbb{Q}$ in $n-1+(n-1)(n-2)N$ variables. Then the
dual space of $U\mathfrak{t}_{4,N}$ is embedded as the $\mathbb{Q}$-vecor
subspace of this non-commutative polynomial ring. In this section,
we propose a way to construct elements of this subspace.

\subsection{Definition of $\Omega_{F}^{\star}$}

Let $F$ be a field, 
\[
\Omega_{F}^{\bullet}\coloneqq\left(\Omega_{F}^{0}\to\Omega_{F}^{1}\to\Omega_{F}^{2}\to\cdots\right)
\]
the complex of K\"{a}hler differential forms on ${\rm Spec}(F)$,
\[
B^{\bullet}(\Omega_{F}^{\bullet})\coloneqq(B^{0}(\Omega_{F}^{\bullet})\to B^{1}(\Omega_{F}^{\bullet})\to\cdots)
\]
the reduce bar complex of $\Omega_{F}^{\bullet}$, and
\[
H^{0}(B^{\bullet}(\Omega_{F}^{\bullet}))\coloneqq\ker(B^{0}(\Omega_{F}^{\bullet})\to B^{1}(\Omega_{F}^{\bullet}))
\]
the first cohomology group of $B^{\bullet}(\Omega_{F}^{\bullet})$.
Define a $\mathbb{Q}$-linear subspace $\Omega_{F}^{\star}$ of $T(\Omega_{F}^{1})\coloneqq\bigoplus_{k=0}^{\infty}(\Omega_{F}^{1})^{\otimes k}$
by
\[
\Omega_{F}^{\star}\coloneqq T(\bar{\Omega}_{F})\cap H^{0}(B^{\bullet}(\Omega_{F}^{\bullet}))
\]
where
\[
\bar{\Omega}_{F}^{1}\coloneqq\{\frac{da}{a}\mid a\in F^{\times}\}\subset\Omega_{F}^{1}.
\]
More explicitly, $\Omega_{F}^{\star}$ is the kernel of the $\mathbb{Q}$-linear
map from $\bigoplus_{k=0}^{\infty}(\bar{\Omega}_{F}^{1})^{\otimes k}$
to $\bigoplus_{i,j=0}^{\infty}(\bar{\Omega}_{F}^{1})^{\otimes i}\otimes\Omega_{F}^{2}\otimes(\bar{\Omega}_{F}^{1})^{\otimes j}$
defined by
\[
\omega_{1}\otimes\cdots\otimes\omega_{k}\mapsto\sum_{i=1}^{k-1}\omega_{1}\otimes\cdots\omega_{i-1}\otimes(\omega_{i}\wedge\omega_{i+1})\otimes\omega_{i+2}\otimes\cdots\otimes\omega_{k}.
\]

\subsection{The characterization of $U\mathfrak{t}_{n,N}$}

Put $\mathbb{Q}_{N}\coloneqq\mathbb{Q}(\mu_{N})$ and $F_{n,N}\coloneqq\mathbb{Q}_{N}(z_{2},\dots,z_{n})$.
Fix a generator $\zeta_{N}\in\mu_{N}$. Let $\Omega_{n,N}^{1}$ the
$\mathbb{Q}$-submodule of $\Omega_{F_{n,N}}^{1}$ spanned by
\[
\frac{dz_{i}}{z_{i}}\ \ \ (2\leq i\leq n)
\]
and
\[
\frac{dz_{i}}{z_{i}-\zeta_{N}^{a}z_{j}}\ \ \ (2\leq i,j\leq n,\ i\neq j,\ a\in\mathbb{Z}/N\mathbb{Z}).
\]
We regard $T(\Omega_{n,N}^{1})\coloneqq\bigoplus_{k=0}^{\infty}\Omega_{n,N}^{\otimes k}\subset T(\Omega_{F_{n,N}}^{1})$
as the $\mathbb{Q}$-algebra by $u\cdot v=v\otimes u$.\footnote{The standard definition of the multiplication is $u\cdot v=u\otimes v$,
but, we change the order here to reconcile the differences in the
orders of multiplications used in the definitions of associator and
$\mathcal{I}_{{\rm CF}}$.} Then we identity $R_{n,N}({\bf k})$ with the dual vector space of
$T(\Omega_{n,N}^{1})$ by an element
\[
\sum_{m=0}^{\infty}\left(\sum_{i}\frac{dz_{i}}{z_{i}}\cdot s^{1i}+\sum_{i,j,a}\frac{dz_{i}}{z_{i}-\zeta_{N}^{a}z_{j}}\cdot s(a)^{ij}\right)^{m}\in T(\Omega_{n,N})\hat{\otimes}R_{n,N}.
\]
Now, we can give a characterization of the dual vector space of $U\mathfrak{t}_{n,N}({\bf k})$.
\begin{prop}
\label{prop:dual_Ut}For $n\geq2$ and $N\geq1$, $U\mathfrak{t}_{n,N}(\mathbb{Q})$
is the dual vector space of 
\[
\Omega_{F_{n,N}}^{\star}\cap T(\Omega_{n,N}^{1})\quad(\subset T(\Omega_{n,N}^{1})),
\]
i.e.,
\[
U\mathfrak{t}_{n,N}({\bf k})={\rm Hom}_{\mathbb{Q}}(\Omega_{F_{n,N}}^{\star}\cap T(\Omega_{n,N}^{1}),{\bf k}),\quad\Omega_{F_{n,N}}^{\star}\cap T(\Omega_{n,N}^{1})={\rm Hom}_{\mathbb{Q}}^{{\rm cont}}(U\mathfrak{t}_{n,N}(\mathbb{Q}),\mathbb{Q}).
\]
\end{prop}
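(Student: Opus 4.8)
The plan is to deduce everything from the second displayed identity
\[
\Omega_{F_{n,N}}^{\star}\cap T(\Omega_{n,N}^{1})={\rm Hom}_{\mathbb{Q}}^{{\rm cont}}(U\mathfrak{t}_{n,N}(\mathbb{Q}),\mathbb{Q}),
\]
the first statement then following by dualizing degreewise: each word-length graded piece is finite dimensional (hence reflexive) and $U\mathfrak{t}_{n,N}({\bf k})=U\mathfrak{t}_{n,N}(\mathbb{Q})\hat{\otimes}{\bf k}$. Write $J\subset R_{n,N}$ for the kernel of $R_{n,N}\to U\mathfrak{t}_{n,N}(\mathbb{Q})$. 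Since $\mathfrak{t}_{n,N}$ is free modulo the listed relations, $J$ is the closed two-sided ideal generated by those relations, hence generated in degrees $1$ and $2$. Under the identification of $R_{n,N}$ with the continuous dual of $T(\Omega_{n,N}^{1})$, the continuous dual of $R_{n,N}/J$ is the annihilator $J^{\perp}$, so the task is to prove $J^{\perp}=\Omega_{F_{n,N}}^{\star}\cap T(\Omega_{n,N}^{1})$. Everything is graded by word length, so I would match the two sides degree by degree and slot by slot.

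First I would dispose of the linear relation $t(a)^{ij}=t(-a)^{ji}$. The identity
\[
\frac{dz_{i}}{z_{i}-\zeta_{N}^{a}z_{j}}+\frac{dz_{j}}{z_{j}-\zeta_{N}^{-a}z_{i}}=d\log(z_{i}-\zeta_{N}^{a}z_{j})
\]
shows that the logarithmic forms inside $\Omega_{n,N}^{1}$ are exactly the symmetric combinations dual to $t(a)^{ij}+t(-a)^{ji}$, so that $\Omega_{n,N}^{1}\cap\bar{\Omega}_{F_{n,N}}^{1}$ is spanned by the $\frac{dz_{i}}{z_{i}}$ together with these $d\log$ forms. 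Consequently the degree-one part of $J$ cuts out, in every slot, precisely $\Omega_{n,N}^{1}\cap\bar{\Omega}_{F_{n,N}}^{1}$; using the elementary identity $A^{\otimes k}\cap B^{\otimes k}=(A\cap B)^{\otimes k}$ for subspaces $A,B$ (which follows from exactness of tensoring), the annihilator of the degree-one relations is $T(\Omega_{n,N}^{1})\cap T(\bar{\Omega}_{F_{n,N}}^{1})$.

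Next I would treat the quadratic relations. Let $W$ denote the bar differential, the slot-wise wedge map whose kernel is $H^{0}(B^{\bullet}(\Omega_{F_{n,N}}^{\bullet}))$, and write $W=\sum_{i}W^{(i)}$, where $W^{(i)}$ wedges the $i$-th and $(i{+}1)$-st tensor slots into the summand of the target carrying $\Omega_{F_{n,N}}^{2}$ in position $i$. Since these summands are independent, $\ker W=\bigcap_{i}\ker W^{(i)}$, and each $\ker W^{(i)}$ is obtained by applying $\ker(m_{2})$ in the slots $i,i{+}1$, where $m_{2}\colon\eta\otimes\eta'\mapsto\eta\wedge\eta'$. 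Dually, the two-sided ideal generated by the bracket relations is the sum over positions of those relations, so its annihilator is the intersection over $i$ of the conditions placing the relation space in slots $i,i{+}1$. Hence the whole statement reduces to a single degree-two assertion: inside $(\Omega_{n,N}^{1}\cap\bar{\Omega}_{F_{n,N}}^{1})^{\otimes2}$, the kernel of $m_{2}$ equals the orthogonal complement of the span of the bracket relations.

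This degree-two matching is the main obstacle; it is precisely the statement that $\mathfrak{t}_{n,N}$ is the holonomy Lie algebra of the forms $\Omega_{n,N}^{1}$. To verify it I would introduce the universal one-form
\[
\omega=\sum_{i}\frac{dz_{i}}{z_{i}}\,t^{1i}+\sum_{i,j,a}\frac{dz_{i}}{z_{i}-\zeta_{N}^{a}z_{j}}\,t(a)^{ij}
\]
valued in $\mathfrak{t}_{n,N}$; since each form is closed, flatness amounts to $\omega\wedge\omega=0$, and the coefficient of each independent two-form in $\omega\wedge\omega$ is a defining relation of $\mathfrak{t}_{n,N}$. Thus the remaining work is to enumerate the products $\eta\wedge\eta'$ of the logarithmic generators together with all their linear dependencies — the antisymmetry $\eta\wedge\eta'=-\eta'\wedge\eta$ and the Arnold relations arising from concurrent divisors such as $z_{i}=\zeta_{N}^{a}z_{j}$, $z_{j}=\zeta_{N}^{b}z_{k}$, $z_{i}=\zeta_{N}^{a+b}z_{k}$ — and to check that the resulting vanishing conditions are spanned exactly by the listed commutators $[t(a)^{ij},t(a+b)^{ik}+t(b)^{jk}]$, $[t^{1i}+t^{1j}+\sum_{c}t(c)^{ij},t(a)^{ij}]$, and the remaining ones. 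Granting this finite linear-algebra computation, combining the two slot-wise descriptions gives $J^{\perp}=T(\Omega_{n,N}^{1})\cap T(\bar{\Omega}_{F_{n,N}}^{1})\cap\ker W=\Omega_{F_{n,N}}^{\star}\cap T(\Omega_{n,N}^{1})$, and degreewise base change to ${\bf k}$ completes the proof.
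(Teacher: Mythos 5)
Your structural reduction is sound, and it is in fact far more explicit than the paper's own proof, which consists of the single sentence ``It follows from definition.'' The degreewise dualization, the identification of the degree-one annihilator with the span of $\frac{dz_{i}}{z_{i}}$ and $d\log(z_{i}-\zeta_{N}^{a}z_{j})$, the splitting $\ker W=\bigcap_{i}\ker W^{(i)}$ coming from the independence of the summands of the bar target, and the slot-wise reduction of the whole statement to a single degree-two assertion are all correct. (Two small caveats: in degree one you also need the converse inclusion, that no element of $\Omega_{n,N}^{1}$ outside the symmetric span is a $\mathbb{Q}$-combination of dlog forms---this follows, e.g., by noting that dlog forms are closed while $d\bigl(\sum c_{ija}\frac{dz_{i}}{z_{i}-\zeta_{N}^{a}z_{j}}\bigr)=0$ forces the symmetry $c_{ija}=c_{ji,-a}$; and the compatibility of ``intersection over slots'' with tensoring should be stated once, via choosing complements. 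Both are routine.)

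The genuine gap is that the step you yourself call ``the main obstacle'' is exactly where all the content of the proposition lives, and you grant it rather than prove it. The assertion that, inside $(\Omega_{n,N}^{1}\cap\bar{\Omega}_{F_{n,N}}^{1})^{\otimes2}$, the annihilator of the listed commutator relations equals $\ker(m_{2})$ has two halves: (a) each relation kills $\ker m_{2}$, which is the flatness $\omega\wedge\omega=0$ of the universal connection and follows from the Arnold identities such as $d\log(z_{i}-\zeta_{N}^{a}z_{j})\wedge d\log(z_{j}-\zeta_{N}^{b}z_{k})+d\log(z_{j}-\zeta_{N}^{b}z_{k})\wedge d\log(z_{i}-\zeta_{N}^{a+b}z_{k})+d\log(z_{i}-\zeta_{N}^{a+b}z_{k})\wedge d\log(z_{i}-\zeta_{N}^{a}z_{j})=0$; and (b)---the hard half---the relations span all of $(\ker m_{2})^{\perp}$, i.e.\ \emph{every} $\mathbb{Q}$-linear dependence among the two-forms $\eta\wedge\eta'$ comes from the rank-two flats of the arrangement $\{z_{i}=0\}\cup\{z_{i}=\zeta_{N}^{a}z_{j}\}$. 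Half (b) is not a computation one can wave at as ``finite linear algebra'': it is the degree-two case of the Brieskorn/Orlik--Solomon statement that the dlog forms of this arrangement satisfy no relations beyond the Arnold ones, and proving it requires an actual argument (residues along the divisors, organized by rank-two flats) or a citation---Kohno for $N=1$, Enriquez \cite{Enr_quasi} for general $N$, where $\mathfrak{t}_{n,N}$ is identified as the holonomy Lie algebra of this arrangement. The paper sidesteps the issue by declaring the proposition definitional, implicitly invoking this known background; but since your proposal undertakes to verify the identification, the verification---or at minimum the citation---must actually appear for the proof to be complete.
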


\begin{proof}
It follows from definition. 
\end{proof}

\subsection{Construction of elements of $\Omega_{F}^{\star}$}

For $k\geq0$, we denote by $A_{F}^{k}$ the $\mathbb{Q}$-vector
space generated by formal symbols $\mathbb{I}(a_{0};a_{1},\dots,a_{k};a_{k+1})$
with $a_{1},\dots,a_{k}\in F$ and $a_{0},a_{k}\in F\cup\{\infty\}$.
Let us define a $\mathbb{Q}$-linear map $\partial:A_{F}^{k}\to A_{F}^{k-1}\otimes\bar{\Omega}_{F}^{1}$
by
\[
\partial\mathbb{I}(a_{0};a_{1},\dots,a_{k};a_{k+1})=\sum_{r\in\{\pm1\}}r\sum_{i=1}^{k}\mathbb{I}(a_{0};a_{1},\dots,\widehat{a_{i}},\dots,a_{k};a_{k+1})\otimes d\log(a_{i+r}-a_{i}),
\]
where we put $d\log(a)=0$ for $a\in\{0,\infty\}$.
\begin{defn}
For $k\geq0$, we define a $\mathbb{Q}$-linear map $\psi^{k}:A_{F}^{k}\to(\bar{\Omega}_{F}^{1})^{\otimes k}$
as follows. For the case $k=0$, we put $\psi^{0}(\mathbb{I}(a_{0};a_{1}))=1$.
For $k\geq1$, define $\psi^{k}$ recursively as the composite map
\[
A_{F}^{k}\xrightarrow{\partial}A_{F}^{k-1}\otimes\bar{\Omega}_{F}^{1}\xrightarrow{\psi^{k-1}\otimes{\rm id}}(\bar{\Omega}_{F}^{1})^{\otimes(k-1)}\otimes\bar{\Omega}_{F}^{1}\simeq(\bar{\Omega}_{F}^{1})^{\otimes k}.
\]
\end{defn}

\begin{lem}
\label{lem:der-wedge-is-zero}The composite map 
\[
A_{F}^{k}\xrightarrow{\partial}A_{F}^{k-1}\otimes\bar{\Omega}_{F}^{1}\xrightarrow{\partial\otimes{\rm id}}A_{F}^{k-2}\otimes\bar{\Omega}_{F}^{1}\otimes\bar{\Omega}_{F}^{1}\xrightarrow{u\otimes\omega_{1}\otimes\omega_{2}\mapsto u\otimes(\omega_{1}\wedge\omega_{2})}A_{F}^{k-2}\otimes\Omega_{F}^{2}
\]
 is zero.
\end{lem}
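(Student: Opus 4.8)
The plan is to expand the composite on a generator $\mathbb{I}(a_0;a_1,\dots,a_k;a_{k+1})$ and to organize the resulting terms by which pair of indices is deleted by the two applications of $\partial$. Writing $\eta_i^r=d\log(a_{i+r}-a_i)$ for the form produced when the first $\partial$ deletes $a_i$ in direction $r\in\{\pm1\}$ (with sign $r$), and $\xi_j^s$ for the form produced when the second $\partial$ deletes $a_j$ in direction $s$ from the already reduced symbol, the composite becomes
\[
\sum_{i}\sum_{r} r\sum_{j\neq i}\sum_{s} s\;\mathbb{I}(a_0;\dots,\widehat{a_i},\dots,\widehat{a_j},\dots;a_{k+1})\otimes(\xi_j^s\wedge\eta_i^r).
\]
Since deleting a fixed unordered pair of positions $\{i,j\}$ always yields the same reduced symbol, it suffices to show that for each such pair the associated $2$-form, obtained by summing the contributions of both orderings $(i,j)$ and $(j,i)$, vanishes in $\Omega_F^2$.

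First I would treat the non-adjacent case $|i-j|\geq2$. Here deleting $a_i$ does not change the two neighbours of $a_j$, and vice versa, so $\xi_j^s$ and $\eta_i^r$ are literally the same logarithmic forms in both orderings and carry the same sign $rs$. The ordering $(i,j)$ contributes $\xi_j^s\wedge\eta_i^r$ while $(j,i)$ contributes $\eta_i^r\wedge\xi_j^s$, and these cancel by antisymmetry of the wedge. Thus only adjacent pairs require real work.

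The heart of the argument is the adjacent case $j=i+1$, where deleting one of the two indices changes the neighbour of the other. Setting $P=a_{i-1}$, $A=a_i$, $B=a_{i+1}$, $Q=a_{i+2}$, a direct expansion produces eight terms built from the five forms $d\log(B-A)$, $d\log(A-P)$, $d\log(Q-B)$, $d\log(Q-A)$, $d\log(B-P)$; one pair of them cancels immediately by antisymmetry, but the remaining six do not cancel formally. To kill them I would invoke the Arnold three-term relation: for $u,v,w\in F^\times$ with $u+v+w=0$,
\[
d\log u\wedge d\log v+d\log v\wedge d\log w+d\log w\wedge d\log u=0,
\]
which itself follows from the Steinberg relation $d\log x\wedge d\log(1-x)=0$ applied to $x=u/(-w)$ together with $d\log(-1)=0$. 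Applying this to the two additive identities $(B-A)+(A-P)+(P-B)=0$ and $(Q-B)+(B-A)+(A-Q)=0$ yields two relations among the six forms, and a short linear computation then shows that the six surviving terms sum to zero. This adjacent case, and specifically the need to bring in the Arnold/Steinberg relation rather than mere antisymmetry, is the main obstacle; everything else is bookkeeping.

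Finally I would check that the boundary and degeneracy cases cause no trouble: when an endpoint $a_0$ or $a_{k+1}$ equals $\infty$, or when two consecutive arguments coincide, the relevant arguments of $d\log$ lie in $\{0,\infty\}$ and the corresponding forms vanish by the stated convention. The additive identities above then degenerate, but the three-term relations they produce remain valid (trivially, once a form is zero), so the same linear computation still forces each adjacent block, and hence the whole composite, to vanish.
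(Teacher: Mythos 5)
Your proposal is correct and takes essentially the same approach as the paper: expand the double application of $\partial$, organize by the unordered pair of deleted indices, cancel all non-adjacent contributions by antisymmetry of the wedge, and kill the adjacent blocks by $d\log$ identities. The only difference is presentational --- the paper writes the adjacent-case cancellations as products of differences, $\bigl(d\log(a_{i+2}-a_{i})-d\log(a_{i+1}-a_{i})\bigr)\wedge\bigl(d\log(a_{i+2}-a_{i+1})-d\log(a_{i}-a_{i+1})\bigr)=0$, which are Steinberg relations $d\log x\wedge d\log(1-x)=0$; these are algebraically the same as your two Arnold three-term relations.
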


\begin{proof}
By definition, we have
\begin{align*}
 & (\partial\otimes{\rm id})\circ\partial(\mathbb{I}(a_{0};a_{1},\dots,a_{k};a_{k+1}))\\
 & =\sum_{1\leq i\leq k}\sum_{r\in\{\pm1\}}\partial(\mathbb{I}(a_{0};a_{1},\dots,\widehat{a_{i}},\dots,a_{k};a_{k+1}))\otimes d\log(a_{i+r}-a_{i})\\
 & =\sum_{1\leq j<i\leq k}\sum_{r,s\in\{\pm1\}}rs\partial(\mathbb{I}(a_{0};a_{1},\dots,\widehat{a_{j}},\dots,\widehat{a_{i}},\dots,a_{k};a_{k+1}))\\
 & \quad\quad\quad\otimes\begin{cases}
d\log(a_{j+s}-a_{j})\otimes d\log(a_{i+r}-a_{i}) & (j,s)\neq(i-1,1)\\
d\log(a_{i+1}-a_{i-1})\otimes d\log(a_{i+r}-a_{i}) & (j,s)=(i-1,1)
\end{cases}\\
 & \quad+\sum_{1\leq i<j\leq k}\sum_{r,s\in\{\pm1\}}\partial(\mathbb{I}(a_{0};a_{1},\dots,\widehat{a_{i}},\dots,\widehat{a_{j}},\dots,a_{k};a_{k+1}))\\
 & \quad\quad\quad\otimes\begin{cases}
d\log(a_{j+s}-a_{j})\otimes d\log(a_{i+r}-a_{i}) & (j,s)\neq(i+1,-1)\\
d\log(a_{i+1}-a_{i-1})\otimes d\log(a_{i+r}-a_{i}) & (j,s)=(i+1,-1)
\end{cases}\\
 & =\sum_{1\leq i<j\leq k}\sum_{r,s\in\{\pm1\}}rs\partial(\mathbb{I}(a_{0};a_{1},\dots,\widehat{a_{i}},\dots,\widehat{a_{j}},\dots,a_{k};a_{k+1}))\\
 & \qquad\qquad\otimes\left(d\log(a_{i+r}-a_{i})\otimes d\log(a_{j+s}-a_{j})+d\log(a_{j+s}-a_{j})\otimes d\log(a_{i+r}-a_{i})\right)\\
 & \quad+\sum_{1\leq i<k}\partial(\mathbb{I}(a_{0};a_{1},\dots,\widehat{a_{i},a_{i+1}},\dots,a_{k};a_{k+1}))\\
 & \quad\quad\quad\otimes\Bigg(\sum_{r\in\{\pm1\}}r\bigg(d\log(a_{i+2}-a_{i})-d\log(a_{i+1}-a_{i})\bigg)\otimes d\log(a_{i+1+r}-a_{i+1})\\
 & \quad\quad\quad\quad\quad-\sum_{r\in\{\pm1\}}r\bigg(d\log(a_{i+1}-a_{i-1})-d\log(a_{i+1}-a_{i})\bigg)\otimes d\log(a_{i+r}-a_{i})\Bigg).
\end{align*}
Since
\begin{align*}
d\log(a_{i+r}-a_{i})\wedge d\log(a_{j+s}-a_{j})-d\log(a_{j+s}-a_{j})\wedge d\log(a_{i+r}-a_{i}) & =0,\\
\sum_{r\in\{\pm1\}}r\bigg(d\log(a_{i+2}-a_{i})-d\log(a_{i+1}-a_{i})\bigg)\wedge d\log(a_{i+1+r}-a_{i+1}) & =0,\\
\sum_{r\in\{\pm1\}}r\bigg(d\log(a_{i+1}-a_{i-1})-d\log(a_{i+1}-a_{i})\bigg)\wedge d\log(a_{i+r}-a_{i}) & =0,
\end{align*}
we obtain the claim.
\end{proof}
We put $A_{F}=\bigoplus_{k=0}^{\infty}A_{F}^{k}$ and $\psi=\bigoplus_{k=0}^{\infty}\psi_{k}:A_{F}\to T(\bar{\Omega}_{F}^{1})$.

\begin{prop}
\label{prop:psi-in-OmegaStar}For $u\in A_{F}$, we have $\psi(u)\in\Omega_{F}^{\star}$.
\end{prop}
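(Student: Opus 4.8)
The plan is to reduce membership in $\Omega_F^\star$ to the explicit kernel condition recorded just above the statement. Since each $\psi^k(u)$ lies in $(\bar\Omega_F^1)^{\otimes k}$ by construction, we have $\psi(u)\in T(\bar\Omega_F^1)$ automatically; so, by the explicit description of $\Omega_F^\star$ as the kernel of the wedge map
\[
\omega_1\otimes\cdots\otimes\omega_k\mapsto\sum_{i=1}^{k-1}\omega_1\otimes\cdots\otimes(\omega_i\wedge\omega_{i+1})\otimes\cdots\otimes\omega_k,
\]
which I denote $W$, it remains only to show $W(\psi(u))=0$. Because $W$ and $\psi$ are graded, it suffices to prove $W(\psi^k(u))=0$ for every $k$ and every $u\in A_F^k$, which I would do by induction on $k$; the cases $k\leq1$ are vacuous, as $W$ vanishes on $(\bar\Omega_F^1)^{\otimes k}$ for $k\leq1$.

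For the inductive step I would use the recursive definition $\psi^k=(\psi^{k-1}\otimes\mathrm{id})\circ\partial$. Writing $\partial u=\sum_\alpha u_\alpha\otimes\omega_\alpha$ with $u_\alpha\in A_F^{k-1}$ and $\omega_\alpha\in\bar\Omega_F^1$, we obtain $\psi^k(u)=\sum_\alpha\psi^{k-1}(u_\alpha)\otimes\omega_\alpha$, in which $\omega_\alpha$ occupies the $k$-th (rightmost) tensor slot. I would then split the sum defining $W(\psi^k(u))$ according to the wedge position $i$. The terms with $1\leq i\leq k-2$ involve only the first $k-1$ slots, leaving the factor $\omega_\alpha$ untouched, and hence reassemble into $\sum_\alpha W(\psi^{k-1}(u_\alpha))\otimes\omega_\alpha$; this vanishes by the inductive hypothesis.

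It remains to treat the single term with $i=k-1$, which wedges the last form of $\psi^{k-1}(u_\alpha)$ against $\omega_\alpha$, and this is where the main work lies. Applying the recursion once more, $\psi^{k-1}(u_\alpha)=(\psi^{k-2}\otimes\mathrm{id})(\partial u_\alpha)$, so its last form is exactly the $\bar\Omega_F^1$-component produced by a second application of $\partial$; since $\sum_\alpha u_\alpha\otimes\omega_\alpha=\partial u$, the two rightmost forms of $\psi^k(u)$ are precisely the two successive outputs of $\partial$. Consequently the $i=k-1$ contribution is obtained by post-composing the map
\[
A_F^k\xrightarrow{\partial}A_F^{k-1}\otimes\bar\Omega_F^1\xrightarrow{\partial\otimes\mathrm{id}}A_F^{k-2}\otimes\bar\Omega_F^1\otimes\bar\Omega_F^1\xrightarrow{u\otimes\omega_1\otimes\omega_2\mapsto u\otimes(\omega_1\wedge\omega_2)}A_F^{k-2}\otimes\Omega_F^2
\]
with $\psi^{k-2}\otimes\mathrm{id}_{\Omega_F^2}$. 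The displayed composite is exactly the one shown to vanish in Lemma \ref{lem:der-wedge-is-zero}, so this term is zero, completing the induction. I expect the only delicate point to be this last bookkeeping: one must verify that the recursion orders the tensor slots so that the two forms being wedged at position $k-1$ are genuinely the two consecutive $\partial$-outputs controlled by Lemma \ref{lem:der-wedge-is-zero}, while the remaining $k-2$ slots match the domain of $\psi^{k-2}$. Once this alignment is checked, the result follows formally from the inductive hypothesis and the lemma.
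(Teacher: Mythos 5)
Your proof is correct and takes essentially the same approach as the paper: the paper's proof of this proposition is literally the one-line remark that it follows from Lemma \ref{lem:der-wedge-is-zero}, and your induction on $k$ (splitting the wedge positions, handling $i\leq k-2$ by the inductive hypothesis and $i=k-1$ by the lemma) is precisely the bookkeeping that this one-liner leaves implicit. The slot-alignment point you flag does check out, since $\psi^{k}=(\psi^{k-2}\otimes\mathrm{id}\otimes\mathrm{id})\circ(\partial\otimes\mathrm{id})\circ\partial$, so the $i=k-1$ term factors exactly through the composite shown to vanish in the lemma.
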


\begin{proof}
It follows from Lemma \ref{lem:der-wedge-is-zero}.
\end{proof}
Recall that the dual vector space of $U\mathfrak{t}_{n,N}(\mathbb{Q})$
is given by $\Omega_{F_{n,N}}^{\star}\cap T(\Omega_{n,N}^{1})$. By
Proposition \ref{prop:psi-in-OmegaStar}, we can construct elements
of this space. Let $A_{n,N}$ be a subspace of $A_{F_{n,N}}$ spanned
by the elements $\mathbb{I}(a_{0};a_{1},\dots,a_{k};a_{k+1})$ such
that $d\log(a_{i}-a_{j})\in\Omega_{n,N}^{1}$ for all $i,j\in\{0,\dots,k+1\}$.
\begin{prop}
\label{prop:psi-in-Utn-dual}For $u\in A_{n,N}$, we have
\[
\psi(u)\in\Omega_{F_{n,N}}^{\star}\cap T(\Omega_{n,N}^{1})\quad\quad(={\rm Hom}_{\mathbb{Q}}^{{\rm cont}}(U\mathfrak{t}_{n,N}(\mathbb{Q}),\mathbb{Q})\,).
\]
\end{prop}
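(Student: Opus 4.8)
The plan is to split the claim into its two constituent conditions, $\psi(u)\in\Omega_{F_{n,N}}^{\star}$ and $\psi(u)\in T(\Omega_{n,N}^{1})$, and to observe that the first is already free of charge. Indeed, since $A_{n,N}\subseteq A_{F_{n,N}}$, Proposition \ref{prop:psi-in-OmegaStar} immediately gives $\psi(u)\in\Omega_{F_{n,N}}^{\star}$ for every $u\in A_{n,N}$. So the only new content is the membership $\psi(u)\in T(\Omega_{n,N}^{1})$, i.e.\ that each tensor factor of $\psi(u)$ lies in the distinguished finite-dimensional submodule $\Omega_{n,N}^{1}\subseteq\bar{\Omega}_{F_{n,N}}^{1}$ spanned by the forms $dz_i/z_i$ and $dz_i/(z_i-\zeta_N^a z_j)$.

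For this second condition I would argue that $A_{n,N}$ is stable under $\partial$ in the precise sense that $\partial(A_{n,N})\subseteq A_{n,N}\otimes\Omega_{n,N}^{1}$. Unwinding the definition of $\partial$ applied to a generator $\mathbb{I}(a_0;a_1,\dots,a_k;a_{k+1})\in A_{n,N}$, every differential factor produced is of the form $d\log(a_{i+r}-a_i)$ with $a_i,a_{i+r}$ among the original arguments $a_0,\dots,a_{k+1}$; by the defining condition of $A_{n,N}$ each such form lies in $\Omega_{n,N}^{1}$. Simultaneously, each truncated symbol $\mathbb{I}(a_0;\dots,\widehat{a_i},\dots;a_{k+1})$ has its arguments forming a \emph{subset} of $\{a_0,\dots,a_{k+1}\}$, so all of its pairwise differences still satisfy $d\log(a_p-a_q)\in\Omega_{n,N}^{1}$, whence the truncated symbol again lies in $A_{n,N}$. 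This establishes the desired stability.

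Granting this, the membership $\psi(u)\in T(\Omega_{n,N}^{1})$ follows by induction on the weight $k$, the base $\psi^{0}=1$ being trivial. Since $\psi^{k}$ is by definition the composite $A_{F}^{k}\xrightarrow{\partial}A_{F}^{k-1}\otimes\bar{\Omega}_{F}^{1}\xrightarrow{\psi^{k-1}\otimes\mathrm{id}}(\bar{\Omega}_F^1)^{\otimes k}$, stability feeds a truncated symbol (which stays in $A_{n,N}$) into $\psi^{k-1}$, whose output lies in $(\Omega_{n,N}^{1})^{\otimes(k-1)}$ by the inductive hypothesis, while the final factor already produced by $\partial$ lies in $\Omega_{n,N}^{1}$. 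Combining with Proposition \ref{prop:psi-in-OmegaStar} yields $\psi(u)\in\Omega_{F_{n,N}}^{\star}\cap T(\Omega_{n,N}^{1})$, and the parenthetical identification of this space with $\mathrm{Hom}_{\mathbb{Q}}^{\mathrm{cont}}(U\mathfrak{t}_{n,N}(\mathbb{Q}),\mathbb{Q})$ is exactly Proposition \ref{prop:dual_Ut}.

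As for the main obstacle: there is no deep difficulty, since both the $\Omega_F^\star$-membership and the underlying $\wedge$-vanishing have been discharged upstream in Proposition \ref{prop:psi-in-OmegaStar} and Lemma \ref{lem:der-wedge-is-zero}. The one point requiring genuine care is the bookkeeping in the stability step: one must check that \emph{every} form appearing at \emph{every} stage of the iterated $\partial$ is the logarithmic derivative of a difference of two of the \emph{original} arguments, not of some newly created quantity. Tracking that the set of arguments can only shrink under $\partial$ is the crux, and it is precisely what lets the single closure hypothesis $d\log(a_i-a_j)\in\Omega_{n,N}^1$ defining $A_{n,N}$ propagate through the whole recursion.
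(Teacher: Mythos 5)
Your proof is correct and takes essentially the same route as the paper: the paper's entire proof reads ``It is just a special case of Proposition \ref{prop:psi-in-OmegaStar},'' treating the membership $\psi(u)\in T(\Omega_{n,N}^{1})$ as immediate from the definitions of $A_{n,N}$ and $\partial$. Your stability claim $\partial(A_{n,N})\subseteq A_{n,N}\otimes\Omega_{n,N}^{1}$ together with the induction on weight is precisely the bookkeeping the paper suppresses, so you have simply made its one-line argument explicit.
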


\begin{proof}
It is just a special case of Proposition \ref{prop:psi-in-OmegaStar}.
\end{proof}
Let
\[
\mathfrak{t}_{n,N}^{(1)}\coloneqq\bigoplus_{j=2}^{n}\mathbb{Q}t^{1j}\oplus\bigoplus_{2\leq i<j\leq n,\,a\in\mathbb{Z}/N\mathbb{Z}}t(a)^{ij}\subset U\mathfrak{t}_{n,N}
\]
be the degree $1$ part of $U\mathfrak{t}_{n,N}$. For $s\in\mathfrak{t}_{n,N}^{(1)}$,
define a $\mathbb{Q}$-linear map $\partial^{(s)}:A_{n,N}\to A_{n,N}$
by
\[
\partial^{(s)}\mathbb{I}(a_{0};a_{1},\dots,a_{k};a_{k+1})=\sum_{r\in\{\pm1\}}r\sum_{\substack{i=1\\
a_{i+r}\neq a_{i}
}
}^{k}\langle s,d\log(a_{i+r}-a_{i})\rangle\cdot\mathbb{I}(a_{0};a_{1},\dots,\widehat{a_{i}},\dots,a_{k};a_{k+1}).
\]
Then, for $\varphi\in U\mathfrak{t}_{n,N}$ and $u\in A_{n,N}$, the
pairing $\langle\varphi,\psi(u)\rangle$ can be calculated by the
following recursion formulas:

\begin{gather}
\langle1,\psi(\mathbb{I}(a_{0};a_{1},\dots,a_{k};a_{k+1}))=\delta_{k,0},\label{eq:pair_1_psi}\\
\langle s\cdot\varphi,\psi(w)\rangle=\langle\varphi,\psi(\partial^{(s)}w)\rangle.\label{eq:pair_s_psi}
\end{gather}

\section{\label{sec:Broadhurst-duality}Broadhurst duality}

Let $\mathcal{A}_{z}\coloneqq\mathbb{Q}\langle e_{0},e_{1},e_{z}\rangle$
be a free non-commutative algebra,
\[
\mathcal{A}_{z}^{0}\coloneqq\mathbb{Q}\oplus\mathbb{Q}e_{z}\oplus\bigoplus_{\substack{a\in\{1,z\}\\
b\in\{0,z\}
}
}e_{a}\mathcal{A}_{z}e_{b}\ \subset\mathcal{A}_{z}^{1}=\mathbb{Q}\oplus\bigoplus_{a\in\{1,z\}}e_{a}\mathcal{A}_{z}\ \subset\mathcal{A}_{z}
\]
the subspaces of $\mathcal{A}_{z}$, and $\tau_{z}$ the anti-automorphism
of $\mathcal{A}_{z}$ defined by $\tau_{z}(e_{0})=e_{z}-e_{1}$, $\tau_{z}(e_{1})=e_{z}-e_{0}$,
and $\tau_{z}(e_{z})=e_{z}$. For $c\in\{0,1\}$, define the $\mathbb{Q}$-linear
map $\partial_{z,c}:\mathcal{A}_{z}\to\mathcal{A}_{z}$ by
\[
\partial_{z,c}(e_{a_{1}}\cdots e_{a_{k}})\coloneqq\sum_{i=1}^{k}(\delta_{\{a_{i},a_{i+1}\},\{z,c\}}-\delta_{\{a_{i},a_{i-1}\},\{z,c\}})e_{a_{1}}\cdots\widehat{e_{a_{i}}}\cdots e_{a_{k}}
\]
with $e_{0}=0$ and $e_{a_{k+1}}=1$. In this section, we prove the
Broadhurst duality for the element of $\mathrm{PENT}(2,{\bf k})$
by using the following theorem.
\begin{thm}[{\cite[Theorem 10]{HS_AlgDif}, Algebraic differential for $\tau_{z}$}]
\label{thm:alg-diff-tauz}For $u\in\mathcal{A}_{z}^{0}$ and $c\in\{0,1\}$,
we have
\[
\tau_{z}^{-1}\circ\partial_{z,c}\circ\tau_{z}(u)=\partial_{z,c}(u).
\]
\end{thm}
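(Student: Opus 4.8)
The plan is to first eliminate the inverse. Since $\tau_z$ is the anti-automorphism with $\tau_z(e_0)=e_z-e_1$, $\tau_z(e_1)=e_z-e_0$, $\tau_z(e_z)=e_z$, a one-line check on generators gives $\tau_z^2=\mathrm{id}$, so $\tau_z^{-1}=\tau_z$ and the assertion is equivalent to the commutation
\[
\partial_{z,c}\circ\tau_z=\tau_z\circ\partial_{z,c}\qquad\text{on }\mathcal{A}_z^0 .
\]
I would then factor $\tau_z=\sigma\circ\rho$, where $\rho$ is the pure word-reversal anti-automorphism fixing every letter and $\sigma$ is the letterwise automorphism $e_0\mapsto e_z-e_1$, $e_1\mapsto e_z-e_0$, $e_z\mapsto e_z$, and analyze how conjugation by each factor moves $\partial_{z,c}$.

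The next step is to unwind $\partial_{z,c}$ combinatorially. With the boundary convention $a_0=0$, $a_{k+1}=1$, every adjacent pair equal to $\{z,c\}$ (a ``marked edge'') at positions $(j,j+1)$ contributes exactly $(\text{delete }a_j)-(\text{delete }a_{j+1})$, i.e. the difference of the two words obtained by collapsing the marked pair to its right, resp. left, letter. In this language $\rho$ reverses the order of the marked edges and exchanges the two boundary pairs $\{0,a_1\}$ and $\{a_k,1\}$, while $\sigma$ rewrites every internal $e_0,e_1$ as a difference involving $e_z$, thereby creating and annihilating marked edges. The heart of the matter is to show that the signs produced by the order reversal in $\rho$ are cancelled precisely by the signs produced when $\sigma$ re-expresses $e_0,e_1$ through $e_z$; this is the telescoping already visible in the sample computation $\partial_{z,1}\bigl(\tau_z(e_ze_0)\bigr)=\partial_{z,1}(e_ze_z-e_1e_z)=e_z-e_z=0=\tau_z\,\partial_{z,1}(e_ze_0)$.

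To organize the cancellation I would argue by induction on the word length, peeling off one letter and tracking how a single application of $\partial_{z,c}$ interacts with the rewriting $\sigma$ and the reversal $\rho$; equivalently, the nearest-neighbour data of $\partial_{z,c}$, $\rho$ and $\sigma$ can be packaged in a transfer-matrix formalism over the alphabet $\{0,1,z\}$ and the operator identity checked blockwise. The admissibility hypothesis $u\in\mathcal{A}_z^0$ (first letter in $\{1,z\}$, last in $\{0,z\}$) is exactly what makes the two boundary pairs consistent under $\rho$: at the left end only the pattern $\{z,0\}$ can occur and at the right end only $\{z,1\}$, so the boundary contributions pair up correctly instead of producing spurious terms.

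I expect the main obstacle to be precisely this boundary bookkeeping combined with the combinatorial proliferation from $\sigma$: since $\sigma$ turns each $e_0$ or $e_1$ into a two-term difference, a length-$k$ word expands into up to $2^{k}$ words before $\partial_{z,c}$ is applied, and one must verify that all cross terms telescope down to the single image $\tau_z\,\partial_{z,c}(u)$. A cleaner, more conceptual route that I would pursue in parallel is to interpret $\partial_{z,c}$ as the degeneration (residue-type) operator recording the behaviour of the iterated integral as the variable $z$ collides with $c$, realized through the universal differential $\partial$ and the map $\psi$ of Section~\ref{sec:Constr-elem-Homk}; since $\tau_z$ is the duality symmetry fixing $z$ and fixing each degeneration $z\to 0$ and $z\to1$ separately (as the sign pattern in the examples confirms), the commutation would then follow from the naturality of $\partial$ under this symmetry, with admissibility again ensuring that the symbol-level computation agrees with $\partial_{z,c}$ without any regularization correction.
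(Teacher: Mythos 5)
First, a point of comparison: the paper does not prove this statement at all — it is imported verbatim from \cite[Theorem 10]{HS_AlgDif} — so your attempt is a from-scratch reproof of an external result rather than a variant of an internal argument. Your preliminary reductions are correct: $\tau_{z}^{2}=\mathrm{id}$ on generators (hence $\tau_{z}^{-1}=\tau_{z}$, since $\tau_{z}^{2}$ is an automorphism), the factorization $\tau_{z}=\sigma\circ\rho$, the observation that $\tau_{z}$ preserves $\mathcal{A}_{z}^{0}$, the boundary analysis (only $\{z,0\}$ can be marked at the left end, only $\{z,1\}$ at the right end), and the sample computation all check out. You also correctly identify that admissibility is essential: for instance $u=e_{0}\notin\mathcal{A}_{z}^{0}$ gives $\tau_{z}^{-1}\circ\partial_{z,c}\circ\tau_{z}(e_{0})=\delta_{c,1}-\delta_{c,0}\neq0=\partial_{z,c}(e_{0})$.

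However, what you have written is a plan, not a proof: the entire content of the theorem is the cancellation you yourself call ``the heart of the matter,'' and it is never carried out — ``I would argue by induction,'' ``can be packaged in a transfer-matrix formalism,'' and ``one must verify that all cross terms telescope'' are statements of intent. Moreover, the induction as proposed does not close in its stated form. Peeling off a letter changes the boundary convention (the fixed $a_{0}=0$, $a_{k+1}=1$ is replaced by an interior letter of the original word), sub-words of admissible words need not be admissible, and, as your own boundary example shows, the identity is genuinely false off $\mathcal{A}_{z}^{0}$; so the inductive statement must be generalized to arbitrary boundary parameters together with a correspondingly twisted conjugation identity, and that generalization is exactly the nontrivial bookkeeping left undone. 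The fallback ``conceptual route'' has a different, equally serious gap: differentiating the duality relation $I(0;u;1)=I(0;\tau_{z}(u);1)$ in $z$ and matching residues at $z=c$ only yields the identity \emph{after} applying the iterated-integral realization; to deduce the asserted identity in $\mathcal{A}_{z}$ itself one needs injectivity of $u\mapsto I(0;u;1)$ on admissible words as (multivalued) functions of $z$ — a substantial linear-independence theorem for hyperlogarithms that you do not address. This distinction matters precisely for the present paper, where the theorem is paired against arbitrary formal series $h\in\mathrm{PENT}(2,{\bf k})$ in the proof of Proposition \ref{prop:BroadhurstDuality}: a function-level identity would not suffice there.
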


We denote by $\iota$ the field embedding of $\mathbb{Q}(z,w)$ to
$F_{4,2}=\mathbb{Q}(z_{2},z_{3},z_{4})$ defined by
\[
\iota(z)=-\frac{z_{4}}{z_{3}},\iota(w)=\frac{z_{2}}{z_{3}}.
\]
Then 
\begin{align*}
d\log(\iota(0)-\iota(1))) & =0,\\
d\log(\iota(0)-\iota(z)) & =-d\log(z_{3})+d\log(z_{4}),\\
d\log(\iota(0)-\iota(w)) & =d\log(z_{2})-d\log(z_{3}),\\
d\log(\iota(1)-\iota(z)) & =d\log(z_{3}+z_{4})-d\log(z_{3}),\\
d\log(\iota(1)-\iota(w)) & =d\log(z_{2}-z_{3})-d\log(z_{3}),\\
d\log(\iota(z)-\iota(w)) & =d\log(z_{2}+z_{4})-d\log(z_{3}).
\end{align*}
Define a $\mathbb{Q}$-linear map $f:\mathcal{A}_{z}\to\mathcal{A}_{4,2}$
by $f(e_{a_{1}}\cdots e_{a_{k}})=\mathbb{I}(0;\iota(a_{1}),\dots,\iota(a_{k});\iota(w))$.
Then by Proposition \ref{prop:psi-in-Utn-dual}, $\psi(f(u))$ is
an element of the dual vector space of $U\mathfrak{t}_{4,2}$ for
$u\in\mathcal{A}_{z}$. Hereafter, we denote $t(0)^{ij}$ and $t(1)^{ij}$
in $U\mathfrak{t}_{4,2}^{0}$ by $t_{+}^{ij}$ and $t_{-}^{ij}$,
respectively. Recall that $\mathrm{PENT}(2,{\bf k})$ is defined as
the set of group-like power series $h\in\exp\mathfrak{t}_{3,2}^{0}$
satisfying
\[
h^{1,2,34}h^{12,3,4}=\delta(h)^{2,3,4}h^{1,23,4}h^{1,2,3}.
\]
Let us calculate $\left\langle h^{1,2,34}h^{12,3,4},\psi(f(u))\right\rangle $
and $\left\langle \delta(h)^{2,3,4}h^{1,23,4}h^{1,2,3},\psi(f(u))\right\rangle $
by using (\ref{eq:pair_1_psi}) and (\ref{eq:pair_s_psi}).
\begin{lem}
\label{lem:A1_is_stable}For $s\in\mathfrak{t}_{4,2}^{(1)}$, we have
\[
\partial^{(s)}\circ f(\mathcal{A}_{z}^{1})\subset f(\mathcal{A}_{z}^{1}).
\]
\end{lem}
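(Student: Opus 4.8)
The plan is to apply $\partial^{(s)}$ to a generic spanning element of $f(\mathcal{A}_z^1)$ and check, term by term, that nothing forbidden is produced. Write $a_0 = 0$ and $a_{k+1} = w$, so that a nonconstant basis element of $f(\mathcal{A}_z^1)$ has the form $f(e_{a_1}\cdots e_{a_k}) = \mathbb{I}(0;\iota(a_1),\dots,\iota(a_k);\iota(w))$ with $a_1,\dots,a_k \in \{0,1,z\}$ and leading letter $a_1 \in \{1,z\}$, i.e. $a_1 \neq 0$. Since $\partial^{(s)}$ merely deletes one interior point and the surviving points still lie in $\{\iota(0),\iota(1),\iota(z)\}$, each term it produces is again of the form $f(\text{word})$; the whole question is whether that word still begins with a nonzero letter (or is empty), which is precisely the condition to lie in $f(\mathcal{A}_z^1)$. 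Note that $\mathcal{A}_z^1$ constrains only the leading letter, so this is the sole thing to verify.

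First I would dispose of the harmless deletions: removing $\iota(a_i)$ for any $i \geq 2$ leaves $a_1 \neq 0$ as the new leading letter, so those terms automatically land in $f(\mathcal{A}_z^1)$. The only way to manufacture a forbidden leading zero is the $i=1$ summand, which deletes $\iota(a_1)$ and promotes $\iota(a_2)$ to the leading position; this is dangerous exactly when $a_2 = 0$. For this summand the two values $r = \pm 1$ combine into the single coefficient
\[
\langle s, d\log(\iota(a_2)-\iota(a_1))\rangle - \langle s, d\log(\iota(a_0)-\iota(a_1))\rangle,
\]
and the \emph{crux} of the argument is that $a_0 = 0$ together with $a_2 = 0$ forces $\iota(a_0) = \iota(a_2) = 0$, so the two logarithmic forms coincide and this coefficient vanishes identically, independently of $s$. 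Thus the terms that would escape $f(\mathcal{A}_z^1)$ are never actually present.

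I do not expect a genuine obstacle: the entire content is the cancellation $\iota(a_0) = \iota(a_2) = 0$ isolated above, and everything else is bookkeeping. The only points worth stating with care are that each surviving integral still lies in $A_{4,2}$ (the pairwise differences of the remaining points form a subset of the original ones, so every $d\log$ stays in $\Omega_{4,2}^1$ and the pairings $\langle s, -\rangle$ remain defined), and the degenerate length-one case, where deleting the single letter yields the constant $\mathbb{I}(0;;\iota(w)) = f(1)$, which lies in $f(\mathcal{A}_z^1)$ since $\mathbb{Q} \subset \mathcal{A}_z^1$.
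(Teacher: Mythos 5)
Your proof is correct and matches the paper's approach: the paper disposes of this lemma with the single line ``It follows from the definition of $\partial^{(s)}$,'' and your verification is precisely the content behind that assertion. In particular, the cancellation you isolate --- the $i=1$ summands for $r=\pm1$ carrying the same form $d\log(\iota(a_2)-\iota(a_1))=d\log(\iota(a_0)-\iota(a_1))$ when $a_2=0=a_0$, hence vanishing net coefficient --- is exactly why stability holds, and the remaining bookkeeping (deletions at $i\geq2$, the length-one case) is handled correctly.
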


\begin{proof}
It follows from the definition of $\partial^{(s)}$.
\end{proof}
\begin{lem}
\label{lem:pent_left}For $h=h(e^{0},e^{1},e^{-1})\in\exp\mathfrak{t}_{3,2}^{0}$
and $u\in\mathcal{A}_{z}^{1}$, we have
\[
\left\langle h^{1,2,34}h^{12,3,4},\psi(f(u))\right\rangle =\left\langle h,\left.u\right|_{z\to-1}\right\rangle .
\]
\end{lem}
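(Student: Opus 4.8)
The plan is to reduce the pairing to the recursion (\ref{eq:pair_1_psi})--(\ref{eq:pair_s_psi}) and to handle the two factors $h^{1,2,34}$ and $h^{12,3,4}$ separately. First I would record, directly from the definitions of the maps $x\mapsto x^{1,2,34}$ and $x\mapsto x^{12,3,4}$, the images of the three generators of $\mathfrak{t}_{3,2}^{0}$:
\begin{align*}
(e^{0})^{1,2,34}&=t^{12}, & (e^{1})^{1,2,34}&=t_{+}^{23}+t_{+}^{24}, & (e^{-1})^{1,2,34}&=t_{-}^{23}+t_{-}^{24},\\
(e^{0})^{12,3,4}&=t^{13}+t_{+}^{23}+t_{-}^{23}, & (e^{1})^{12,3,4}&=t_{+}^{34}, & (e^{-1})^{12,3,4}&=t_{-}^{34}.
\end{align*}
Combining these with the six displayed values of $d\log(\iota(a)-\iota(b))$ and the duality pairing (under which $t^{1i}$, $t_{+}^{ij}$, $t_{-}^{ij}$ are dual to $d\log z_{i}$, $d\log(z_{i}-z_{j})$, $d\log(z_{i}+z_{j})$), I would compute the operator $\partial^{(s)}$ on $f(\mathcal{A}_{z}^{1})$ for each generator $s$ occurring above. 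By Lemma \ref{lem:A1_is_stable} all of these preserve $f(\mathcal{A}_{z}^{1})$, so the whole computation stays inside this space.

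The crucial elementary point is that $t^{12}$, $t_{+}^{23}$, $t_{-}^{24}$ pair nontrivially only with the three forms attached to the $\iota(w)$-adjacencies $\{0,w\},\{1,w\},\{z,w\}$; since $\iota(w)$ occurs only at the right end of $f(v)$, the left-neighbour term of $\partial^{(s)}$ vanishes, and these three operators act on $f(\mathcal{A}_{z}^{1})$ by simply deleting the last letter of the word when it equals $0,1,z$ respectively (and by zero otherwise). Peeling the left factor $h^{1,2,34}$ first through (\ref{eq:pair_s_psi}) and iterating these last-letter deletions, I get
\[
\langle h^{1,2,34},\psi(f(u))\rangle=\langle h,u|_{z\to-1}\rangle,
\]
because the only word of $h$ whose image strips $f(u)$ down to length $0$ is the reverse of the letterwise reduction $u|_{z\to-1}$, with each deletion contributing the coefficient $+1$.

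To dispose of the second factor, let $\widehat{h^{1,2,34}}$ denote the $\mathbb{Q}$-linear endomorphism of the span of $f(\mathcal{A}_{z}^{1})$ obtained by expanding $h^{1,2,34}$ into words and applying the corresponding iterated $\partial^{(s)}$. Put $X:=\widehat{h^{1,2,34}}\,f(u)$, and use Lemma \ref{lem:A1_is_stable} to write $X=\sum_{v}d_{v}f(v)$ with $v\in\mathcal{A}_{z}^{1}$. Then (\ref{eq:pair_s_psi}) gives
\[
\langle h^{1,2,34}h^{12,3,4},\psi(f(u))\rangle=\langle h^{12,3,4},\psi(X)\rangle=\sum_{v}d_{v}\,\langle h^{12,3,4},\psi(f(v))\rangle,
\]
while (\ref{eq:pair_1_psi}) together with the previous paragraph gives $d_{\emptyset}=\langle 1,\psi(X)\rangle=\langle h^{1,2,34},\psi(f(u))\rangle=\langle h,u|_{z\to-1}\rangle$. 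Thus the lemma reduces to the single claim
\[
\langle h^{12,3,4},\psi(f(v))\rangle=\delta_{|v|,0}\qquad(v\in\mathcal{A}_{z}^{1}),
\]
that is, $h^{12,3,4}$ pairs with $\psi\circ f$ as the counit.

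I expect this last claim to be the main obstacle. The favourable feature is that $(e^{1})^{12,3,4}=t_{+}^{34}$ pairs to zero with every form in the table, so $\partial^{((e^{1})^{12,3,4})}$ vanishes on $f(\mathcal{A}_{z}^{1})$ and only words in $e^{0},e^{-1}$ survive; the unfavourable feature is that the remaining operators $\partial^{(t^{13})}+\partial^{(t_{+}^{23})}$ and $\partial^{(t_{-}^{34})}$ remove interior letters and, unlike the operators of the first factor, do not vanish term by term. My plan is to prove the claim by induction on $|v|$, showing that the contributions of $\partial^{(t^{13})}$ --- which records every non-$\{0,1\}$ adjacency through the universal summand $-d\log z_{3}$ --- cancel in sign-opposite pairs against those of $\partial^{(t_{+}^{23})}$ (the $\{1,w\}$ adjacency) and $\partial^{(t_{-}^{34})}$ (the $\{1,z\}$ adjacency); the degree one and degree two cases already exhibit exactly this cancellation. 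Granting the claim, the displayed reduction yields $\langle h^{1,2,34}h^{12,3,4},\psi(f(u))\rangle=\langle h,u|_{z\to-1}\rangle$, as desired.
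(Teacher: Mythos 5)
Your first half is correct and coincides with the paper's own first step: the operators $\partial^{(s)}$ attached to the letters of $h^{1,2,34}$ act on $f(\mathcal{A}_{z}^{1})$ by deleting the last letter (with coefficient $+1$, matching $0,1,z$ to $t^{12}$, $t_{+}^{23}+t_{+}^{24}$, $t_{-}^{23}+t_{-}^{24}$ respectively), and this reduces the lemma, exactly as in the paper, to the counit identity $\langle h^{12,3,4},\psi(f(v))\rangle=\delta_{|v|,0}$ for $v\in\mathcal{A}_{z}^{1}$. But that identity is precisely where your proof stops: you call it the main obstacle, verify degrees one and two, sketch an induction, and then proceed ``granting the claim.'' So the heart of the lemma is missing.

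Moreover, the mechanism you propose for the missing induction is not the right one. The cancellation between $\partial^{(t^{13})}$ and $\partial^{(t_{+}^{23})}$ is internal to a single operator, since both $t^{13}$ and $t_{+}^{23}$ occur in $(e^{0})^{12,3,4}=t^{13}+t_{+}^{23}+t_{-}^{23}$; that one is real. But $\partial^{(t_{-}^{34})}$ is the operator attached to the letter $e^{-1}$: in $\langle h^{12,3,4},\psi(f(v))\rangle$ its contributions are weighted by coefficients of $h$ on words containing $e^{-1}$, while those of $\partial^{(t^{13})}$ are weighted by coefficients of words containing $e^{0}$ in the corresponding slot. The lemma is asserted for \emph{every} group-like $h$, and (by PBW) the homogeneous components of group-like elements span the whole degree-$m$ part of the algebra, so contributions attached to distinct words cannot cancel one another; any correct proof must show that each individual word of positive length in the three letters $t^{13}+t_{+}^{23}+t_{-}^{23}$, $t_{+}^{34}$, $t_{-}^{34}$ annihilates $\psi(f(v))$. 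That is what the paper does, with no cancellation bookkeeping at all: (i) by Lemma \ref{lem:A1_is_stable} every intermediate stage of applying these operators stays in the span of $f(\mathcal{A}_{z}^{1})$, so a chain reaching length $0$ must pass through the span of $f(e_{1})$ and $f(e_{z})$, the only words of length one in $\mathcal{A}_{z}^{1}$; and (ii) a one-line computation with the $d\log$ table shows $\partial^{(s)}f(e_{a})=0$ for all three letters $s$ and $a\in\{1,z\}$ (for instance, for $s=t^{13}+t_{+}^{23}+t_{-}^{23}$ and $a=z$ the right-neighbour term $\langle s,d\log(\iota(w)-\iota(z))\rangle=-1$ cancels the left-neighbour term $\langle s,d\log(\iota(0)-\iota(z))\rangle=-1$). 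Hence no positive-length word ever produces a length-zero component, giving $\langle h^{12,3,4},\psi(f(v))\rangle=\langle1,\psi(f(v))\rangle=\delta_{|v|,0}$, and the lemma follows. Your argument becomes complete, and in fact identical to the paper's, once the proposed cancellation scheme is replaced by these two observations.
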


\begin{proof}
First note that 
\begin{align*}
h^{1,2,34} & =h(t^{12},t_{+}^{23}+t_{+}^{24},t_{-}^{23}+t_{-}^{24}),\\
h^{12,3,4} & =h(t^{13}+t_{+}^{23}+t_{-}^{23},t_{+}^{34},t_{-}^{34}).
\end{align*}
By definition, we have
\begin{align*}
\partial^{(t_{12})}\circ f(e_{a_{1}}\cdots e_{a_{k}}) & =\delta_{a_{k},0}f(e_{a_{1}}\cdots e_{a_{k-1}}),\\
\partial^{(t_{+}^{23}+t_{+}^{24})}\circ f(e_{a_{1}}\cdots e_{a_{k}}) & =\delta_{a_{k},1}f(e_{a_{1}}\cdots e_{a_{k-1}}),\\
\partial^{(t_{-}^{23}+t_{-}^{24})}\circ f(e_{a_{1}}\cdots e_{a_{k}}) & =\delta_{a_{k},z}f(e_{a_{1}}\cdots e_{a_{k-1}}).
\end{align*}
Thus, by (\ref{eq:pair_1_psi}) and (\ref{eq:pair_s_psi}), we have
\[
\left\langle h^{1,2,34}h^{12,3,4},\psi(f(e_{a_{1}}\cdots e_{a_{k}}))\right\rangle =\sum_{j=0}^{k}\left\langle h,\left.(e_{a_{j+1}}\cdots e_{a_{k}})\right|_{z\to-1}\right\rangle \cdot\left\langle h^{12,3,4},\psi(f(e_{a_{1}}\cdots e_{a_{j}}))\right\rangle 
\]
where $u\mapsto u\mid_{z\to-1}$ is the ring homomorphism from $\mathcal{A}_{z}$
to $\mathfrak{h}_{2}=\mathbb{Q}\langle e_{0},e_{1},e_{-1}\rangle$
defined by $e_{z}\mapsto e_{-1}$ and $e_{a}\mapsto e_{a}$ for $a\in\{0,1\}$.
Since
\[
\partial^{(s)}\circ f(e_{a})=0\qquad(s\in\{t^{13}+t_{+}^{23}+t_{-}^{23},t_{+}^{34},t_{-}^{34}\},\ a\in\{1,z\})
\]
and $f(e_{a_{1}}\cdots e_{a_{j}})\in\mathcal{A}_{z}^{1}$, Lemma \ref{lem:A1_is_stable}
implies that
\[
\left\langle h^{12,3,4},\psi(f(e_{a_{1}}\cdots e_{a_{j}}))\right\rangle =\left\langle 1,\psi(f(e_{a_{1}}\cdots e_{a_{j}}))\right\rangle =\delta_{j,0}.
\]
Thus the lemma is proved.
\end{proof}
Let $u\mapsto u\mid_{z\to\infty}$ be the ring homomorphism from $\mathcal{A}_{z}$
to $\mathfrak{h}_{2}$ defined by $e_{z}\mapsto0$ and $e_{a}\mapsto e_{a}$
for $a\in\{0,1\}$.
\begin{lem}
\label{lem:pent-right}For $h=h(e^{0},e^{1},e^{-1})\in\exp\mathfrak{t}_{3,2}^{0}$
and $u\in\mathcal{A}_{z}^{0}$, we have
\[
\left\langle \delta(h)^{2,3,4}h^{1,23,4}h^{1,2,3},\psi(f(u))\right\rangle =\sum_{l=0}^{\infty}\sum_{c_{1},\dots,c_{l}\in\{0,1\}}\left\langle h,e_{-c_{1}}\cdots e_{-c_{l}}\right\rangle \cdot\left\langle h,\left.\left(D_{c_{1}}\circ\cdots\circ D_{c_{l}}(u)\right)\right|_{z\to\infty}\right\rangle .
\]
where we put $D_{0}=-\partial_{z,0}-\partial_{z,1}$ and $D_{1}=\partial_{z,1}$.
\end{lem}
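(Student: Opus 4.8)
The plan is to follow the strategy of Lemma~\ref{lem:pent_left}: evaluate the pairing by iterating the recursion (\ref{eq:pair_s_psi}), peeling degree-one generators off the left of the product $\delta(h)^{2,3,4}h^{1,23,4}h^{1,2,3}$ one at a time. Expanding each factor as a power series in the generators of $\mathfrak{t}_{4,2}^{(1)}$ and applying (\ref{eq:pair_s_psi}) monomial by monomial yields, for any $\Phi_{1},\Phi_{2}\in U\mathfrak{t}_{4,2}$,
\[
\langle\Phi_{1}\Phi_{2},\psi(f(u))\rangle=\langle\Phi_{2},\psi(\mathcal{D}_{\Phi_{1}}(f(u)))\rangle,
\]
where $\mathcal{D}_{\Phi_{1}}$ is obtained from $\Phi_{1}$ by replacing each generator $s$ in a monomial by $\partial^{(s)}$, the leftmost generator acting first. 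Since every $\partial^{(s)}$ merely deletes a letter of $\mathbb{I}(0;\iota(a_{1}),\dots;\iota(w))$, the space $f(\mathcal{A}_{z})$ is stable under all operators that occur (cf.\ Lemma~\ref{lem:A1_is_stable}), so the whole computation takes place inside $f(\mathcal{A}_{z})$. Writing out the substitutions
\[
\delta(h)^{2,3,4}=\delta(h)(t_{+}^{23},t_{+}^{34}),\qquad h^{1,2,3}=h(t^{12},t_{+}^{23},t_{-}^{23}),
\]
\[
h^{1,23,4}=h(t^{12}+t^{13}+t_{+}^{23}+t_{-}^{23},\ t_{+}^{24}+t_{+}^{34},\ t_{-}^{24}+t_{-}^{34})
\]
reduces the lemma to understanding how the individual slots act through $\partial^{(\cdot)}$.

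First I will show the factor $\delta(h)^{2,3,4}$ disappears. The displayed $d\log$ table shows that the difference $z_{3}-z_{4}$ never occurs, so $\partial^{(t_{+}^{34})}$ kills $f(\mathcal{A}_{z})$; hence any monomial of $\delta(h)(t_{+}^{23},t_{+}^{34})$ involving $t_{+}^{34}$ contributes nothing and only the pure powers $(t_{+}^{23})^{m}$ survive. As $\delta(h)$ is group-like with vanishing coefficient of $e_{0}$ by Proposition~\ref{prop:pent_property}(i), these coefficients vanish for $m\geq1$; thus $\mathcal{D}_{\delta(h)^{2,3,4}}=\mathrm{id}$ and we are reduced to computing $\langle h^{1,23,4}h^{1,2,3},\psi(f(u))\rangle$.

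The core step is to match the slots of $h^{1,23,4}$ with $D_{0},D_{1}$. Reading the $d\log$ table, the $e^{1}$-slot should annihilate $f(\mathcal{A}_{z})$ (neither $z_{2}-z_{4}$ nor $z_{3}-z_{4}$ appears), whereas computing the adjacency coefficients $\langle s,d\log(\iota(a_{i+r})-\iota(a_{i}))\rangle$ should give
\[
\partial^{(t^{12}+t^{13}+t_{+}^{23}+t_{-}^{23})}\circ f=f\circ D_{0},\qquad\partial^{(t_{-}^{24}+t_{-}^{34})}\circ f=f\circ D_{1},
\]
once the right endpoint $\iota(w)$ is made to play the role of the letter $1$ in the definition of $\partial_{z,c}$. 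Granting this, peeling $h^{1,23,4}$ turns $f(u)$ into $\sum_{l\geq0}\sum_{\mathbf{c}\in\{0,1\}^{l}}\langle h,e_{-c_{l}}\cdots e_{-c_{1}}\rangle\,f(D_{c_{l}}\cdots D_{c_{1}}(u))$, the $e^{0}$-slot contributing $D_{0}$ ($c=0$) and the $e^{-1}$-slot contributing $D_{1}$ ($c=1$). Finally, because $\partial^{(t_{-}^{23})}$ kills $f(\mathcal{A}_{z})$ while $\partial^{(t^{12})}$ and $\partial^{(t_{+}^{23})}$ delete only a trailing $0$, resp.\ $1$, the operator $\mathcal{D}_{h^{1,2,3}}$ strips off the trailing $\{0,1\}$-run and reaches the empty word exactly when its argument is a word in $e_{0},e_{1}$; together with (\ref{eq:pair_1_psi}) this gives $\langle h^{1,2,3},\psi(f(v))\rangle=\langle h,v|_{z\to\infty}\rangle$ for all $v\in\mathcal{A}_{z}$. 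Combining the two computations and relabelling $\mathbf{c}\mapsto(c_{l},\dots,c_{1})$ produces the asserted formula.

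I expect the main obstacle to be this core step: checking, with the signs coming from the $r=\pm1$ terms of $\partial^{(s)}$ and the symmetry $d\log(x-y)=d\log(y-x)$, that the many adjacencies touched by $t^{13}$ in the $e^{0}$-slot recombine exactly into $-\partial_{z,0}-\partial_{z,1}=D_{0}$ — in particular that its $\{0,\iota(w)\}$ and $\{1,\iota(w)\}$ contributions cancel against those of $t^{12}$ and $t_{+}^{23}$ — and fixing the endpoint convention $\iota(w)\leftrightarrow1$ so that the boundary terms of $\partial_{z,0}$ and $\partial_{z,1}$ come out correctly.
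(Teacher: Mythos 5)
Your overall strategy coincides with the paper's (peel generators off the left via (\ref{eq:pair_s_psi}), kill the factor $\delta(h)^{2,3,4}$, read $h^{1,23,4}$ through $D_{0},D_{1}$, reduce $h^{1,2,3}$ to the $z\to\infty$ specialization), but there is a genuine gap in how you dispose of $\delta(h)^{2,3,4}$. You reduce it to the pure powers $(t_{+}^{23})^{m}$ and then claim their coefficients vanish by Proposition \ref{prop:pent_property}(i). That proposition applies to $h\in\mathrm{PENT}(2,{\bf k})$, whereas Lemma \ref{lem:pent-right} is stated for an \emph{arbitrary} group-like $h\in\exp\mathfrak{t}_{3,2}^{0}$: for such $h$ the coefficient of $(t_{+}^{23})^{m}$ equals $\langle h,e_{0}\rangle^{m}/m!$ and need not vanish, so your assertion $\mathcal{D}_{\delta(h)^{2,3,4}}=\mathrm{id}$ fails in general. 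The tell-tale sign is that your argument never uses the hypothesis $u\in\mathcal{A}_{z}^{0}$, and the stated identity is actually \emph{false} without it. Indeed, take $u=e_{1}\notin\mathcal{A}_{z}^{0}$: then $\psi(f(e_{1}))=d\log(\iota(1)-\iota(w))=d\log(z_{2}-z_{3})-d\log(z_{3})$, and only degree-one monomials contribute to the pairing; this form pairs to $1$ with the $t_{+}^{23}$ coming from $\delta(h)^{2,3,4}$, to $0$ with $t^{12}+t^{13}+t_{+}^{23}+t_{-}^{23}$ (the $+1$ from $t_{+}^{23}$ cancels the $-1$ from $t^{13}$) and with $t_{+}^{24}+t_{+}^{34}$, $t_{-}^{24}+t_{-}^{34}$, and to $1$ with the $t_{+}^{23}$ in $h^{1,2,3}$, so the left-hand side equals $\langle h,e_{0}\rangle+\langle h,e_{1}\rangle$; the right-hand side equals $\langle h,e_{1}\rangle$ because $D_{0}(e_{1})=D_{1}(e_{1})=0$. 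Hence any correct proof must invoke $u\in\mathcal{A}_{z}^{0}$, and the only place it can enter is exactly this step.

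The repair (which is the paper's actual argument) is coefficient-free: $t_{+}^{23}$ is dual to $d\log(z_{2}-z_{3})$, which among the forms $d\log(\iota(a)-\iota(b))$ occurs only in $d\log(\iota(1)-\iota(w))$, so $\partial^{(t_{+}^{23})}f(e_{a_{1}}\cdots e_{a_{k}})=\delta_{a_{k},1}f(e_{a_{1}}\cdots e_{a_{k-1}})$, i.e.\ it deletes a trailing $e_{1}$ and nothing else. Since every nonempty word of $u\in\mathcal{A}_{z}^{0}$ ends in $e_{0}$ or $e_{z}$, we get $\partial^{(t_{+}^{23})}f(u)=0$; combined with your (correct) observation $\partial^{(t_{+}^{34})}\circ f=0$, and since the leftmost generator acts first in (\ref{eq:pair_s_psi}), every nonconstant monomial of $\delta(h)^{2,3,4}$ annihilates $\psi(f(u))$ regardless of its coefficient. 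With this substitution your outline becomes the paper's proof: the slot identities you ``grant'' ($\partial^{(t_{+}^{24}+t_{+}^{34})}\circ f=0$, $\partial^{(t^{12}+t^{13}+t_{+}^{23}+t_{-}^{23})}\circ f=f\circ D_{0}$, $\partial^{(t_{-}^{24}+t_{-}^{34})}\circ f=f\circ D_{1}$, and $\langle h^{1,2,3},\psi(f(v))\rangle=\langle h,v|_{z\to\infty}\rangle$) are precisely the ones the paper asserts, and they do follow from the displayed $d\log$ table once $\iota(w)$ is given the role of the right endpoint $1$. Note finally that your argument, as written, does prove the lemma under the extra hypothesis $h\in\mathrm{PENT}(2,{\bf k})$, which is the only case used in Proposition \ref{prop:BroadhurstDuality}; but it is not a proof of the lemma as stated.
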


\begin{proof}
First note that
\begin{align*}
h^{1,2,3} & =h(t^{12},t_{+}^{23},t_{-}^{23}),\\
h^{1,23,4} & =h(t^{12}+t^{13}+t_{+}^{23}+t_{-}^{23},t_{+}^{24}+t_{+}^{34},t_{-}^{24}+t_{-}^{34}),\\
\delta(h)^{2,3,4} & =h(t_{+}^{23},t_{+}^{34},0).
\end{align*}
Since
\begin{align*}
\partial^{(t_{+}^{23})}\circ f(e_{a_{1}}\cdots e_{a_{k}}) & =\delta_{a_{k},1}f(e_{a_{1}}\cdots e_{a_{k-1}}),\\
\partial^{(t_{+}^{34})}\circ f(e_{a_{1}}\cdots e_{a_{k}}) & =0,
\end{align*}
we have
\[
\left\langle \delta(h)^{2,3,4}h^{1,23,4}h^{1,2,3},\psi(f(u))\right\rangle =\left\langle h^{1,23,4}h^{1,2,3},\psi(f(u))\right\rangle .
\]
Furthermore, since
\begin{align*}
\partial^{(t^{12}+t^{13}+t_{+}^{23}+t_{-}^{23})}\circ f(u) & =f(D_{0}(u)),\\
\partial^{(t_{+}^{24}+t_{+}^{34})}\circ f(u) & =0,\\
\partial^{(t_{-}^{24}+t_{-}^{34})}\circ f(u) & =f(D_{1}(u)),
\end{align*}
we have
\[
\left\langle h^{1,23,4}h^{1,2,3},\psi(f(u))\right\rangle =\sum_{l=0}^{\infty}\sum_{c_{1},\dots,c_{l}\in\{0,1\}}\left\langle h,e_{-c_{1}}\cdots e_{-c_{l}}\right\rangle \cdot\left\langle h^{1,2,3},\psi(f(D_{c_{1}}\circ\cdots\circ D_{c_{l}}(u)))\right\rangle .
\]
Finally, since
\begin{align*}
\partial^{(t^{12})}\circ f(e_{a_{1}}\cdots e_{a_{k}}) & =\delta_{a_{k},0}f(e_{a_{1}}\cdots e_{a_{k-1}}),\\
\partial^{(t_{+}^{23})}\circ f(e_{a_{1}}\cdots e_{a_{k}}) & =\delta_{a_{k},1}f(e_{a_{1}}\cdots e_{a_{k-1}}),\\
\partial^{(t_{-}^{23})}\circ f(e_{a_{1}}\cdots e_{a_{k}}) & =0,
\end{align*}
we have
\[
\left\langle h^{1,2,3},\psi(f(w))\right\rangle =\left\langle h,\left.w\right|_{z\to\infty}\right\rangle 
\]
for any $w\in\mathbb{Q}\langle e_{0},e_{1},e_{z}\rangle$. Hence,
we have
\[
\left\langle \delta(h)^{2,3,4}h^{1,23,4}h^{1,2,3},\psi(f(u))\right\rangle =\sum_{l=0}^{\infty}\sum_{c_{1},\dots,c_{l}\in\{0,1\}}\left\langle h,e_{-c_{1}}\cdots e_{-c_{l}}\right\rangle \cdot\left\langle h,\left.\left(D_{c_{1}}\circ\cdots\circ D_{c_{l}}(u)\right)\right|_{z\to\infty}\right\rangle .\qedhere
\]
\end{proof}
Now we can show the Broadhurst duality for elements of ${\rm PENT}(2,{\bf k})$.
\begin{prop}[Broadhurst duality for mixed associator]
\label{prop:BroadhurstDuality}If $h\in\mathrm{PENT}(2,{\bf k})$
then
\[
h=\exp(-\alpha_{h}e^{1})\mathcal{T}(h)^{-1}\exp(-\alpha_{h}e^{0}),
\]
where $\alpha_{h}$ is the coefficient of $e^{-1}$ in $h$ and $\mathcal{T}$
is the involution of $U\mathfrak{t}_{3,2}^{0}$ defined by $e^{0}\leftrightarrow e^{1}$
and $e^{-1}\leftrightarrow e^{\infty}=-e^{0}-e^{1}-e^{-1}$.
\end{prop}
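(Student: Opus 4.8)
The plan is to pair both sides of the mixed pentagon equation with the functionals $\psi(f(u))$ built in Section \ref{sec:Constr-elem-Homk} and then to feed in the $\tau_z$-symmetry of Theorem \ref{thm:alg-diff-tauz}. Since $h\in\mathrm{PENT}(2,{\bf k})$ we have $h^{1,2,34}h^{12,3,4}=\delta(h)^{2,3,4}h^{1,23,4}h^{1,2,3}$, so evaluating Lemmas \ref{lem:pent_left} and \ref{lem:pent-right} on any $u\in\mathcal{A}_z^0$ yields the master identity
\[
\left\langle h,\,u|_{z\to-1}\right\rangle=\sum_{l=0}^{\infty}\sum_{c_1,\dots,c_l\in\{0,1\}}\left\langle h,\,e_{-c_1}\cdots e_{-c_l}\right\rangle\cdot\left\langle h,\,(D_{c_1}\cdots D_{c_l}u)|_{z\to\infty}\right\rangle .
\]
Because $h$ is group-like, $\langle h,-\rangle$ is a $\shuffle$-character, so the right-hand side equals $\langle h,\Lambda_h(u)\rangle$ with $\Lambda_h(u):=\sum_{l,c}(e_{-c_1}\cdots e_{-c_l})\shuffle(D_{c_1}\cdots D_{c_l}u)|_{z\to\infty}\in\mathfrak{h}_2$. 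Thus the identity collapses to the single statement $\langle h,u|_{z\to-1}\rangle=\langle h,\Lambda_h(u)\rangle$ for $u\in\mathcal{A}_z^0$; testing it on $u=e_z$ (using $\langle h,e_0\rangle=\langle h,e_1\rangle=0$ from Proposition \ref{prop:pent_property}(i)) already isolates $\alpha_h=\langle h,e_{-1}\rangle$ as the relevant constant.

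Next I would exploit $\tau_z$. Writing $D_0=-\partial_{z,0}-\partial_{z,1}$ and $D_1=\partial_{z,1}$, Theorem \ref{thm:alg-diff-tauz} gives $\partial_{z,c}\circ\tau_z=\tau_z\circ\partial_{z,c}$ on $\mathcal{A}_z^0$, so $\tau_z$ commutes with every $D_c$ and with the whole differential operator $\sum_{l,c}\langle h,e_{-c_1}\cdots e_{-c_l}\rangle D_{c_1}\cdots D_{c_l}$. Since $\tau_z$ also preserves $\mathcal{A}_z^0$, I may substitute $u=\tau_z(v)$ in the master identity and pull $\tau_z$ to the outside, which replaces $u|_{z\to-1}$ and $(D_{c_1}\cdots D_{c_l}u)|_{z\to\infty}$ by $\tau_z(v)|_{z\to-1}$ and $\tau_z(D_{c_1}\cdots D_{c_l}v)|_{z\to\infty}$. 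A direct computation from the definition of $\tau_z$ shows that the two specializations are the order-reversing maps $e_0\mapsto e_{-1}-e_1,\ e_1\mapsto e_{-1}-e_0,\ e_z\mapsto e_{-1}$ and $e_0\mapsto-e_1,\ e_1\mapsto-e_0,\ e_z\mapsto0$, respectively.

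The key observation is then that these twisted specializations realize the involution $\mathcal{T}$ together with the antipode. Indeed, on words in $e_0,e_1$ the map $e_0\mapsto-e_1,\ e_1\mapsto-e_0$ with reversal is exactly $\shuffle$-antipode composed with the swap $e_0\leftrightarrow e_1$, and dually $\langle h,-\rangle$ precomposed with it equals $\langle\mathcal{T}(h)^{-1},-\rangle$; the three-letter map similarly encodes the full involution $e^0\leftrightarrow e^1$, $e^{-1}\leftrightarrow e^{\infty}$. Hence the $\tau_z$-substituted master identity is precisely the master identity with $h$ replaced by $\mathcal{T}(h)^{-1}$, and comparing it with the original forces $\langle h,-\rangle$ and $\langle\mathcal{T}(h)^{-1},-\rangle$ to agree on $\mathfrak{h}_2$ up to a boundary correction. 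Promoting the identity from $\mathcal{A}_z^0$ to all words, both sides are group-like, so the discrepancy lives only on the excluded boundary letters; tracking the $\shuffle$-regularization at $z\to0,\infty$ shows this correction is exactly left multiplication by $\exp(-\alpha_h e^1)$ and right multiplication by $\exp(-\alpha_h e^0)$, giving $h=\exp(-\alpha_h e^1)\mathcal{T}(h)^{-1}\exp(-\alpha_h e^0)$.

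I expect the main obstacle to be this last, regularization-heavy step rather than the formal manipulations with $\tau_z$: one must verify that $\partial_{z,c}$ (hence each $D_c$) stabilizes $\mathcal{A}_z^0$ so that Theorem \ref{thm:alg-diff-tauz} can be iterated, and then keep the boundary bookkeeping honest when passing from $\mathcal{A}_z^0$ to $\mathfrak{h}_2$, matching both exponential factors and their signs exactly. This is the level-two analogue of Furusho's duality $g(e_0,e_1)g(e_1,e_0)=1$ recorded in Proposition \ref{prop:pent_property}(iii), and I would model the regularization argument on that proof.
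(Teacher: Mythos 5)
Your proposal assembles the same raw materials as the paper --- the master identity obtained by pairing the mixed pentagon equation with $\psi(f(u))$ via Lemmas \ref{lem:pent_left} and \ref{lem:pent-right}, and Theorem \ref{thm:alg-diff-tauz} to commute $\tau_{z}$ past the operators $D_{c}$ --- but there is a genuine gap at the step where you close the loop. Your claim that, on words in $e_{0},e_{1}$, precomposing $\langle h,-\rangle$ with reversal and $e_{0}\mapsto-e_{1}$, $e_{1}\mapsto-e_{0}$ yields $\langle\mathcal{T}(h)^{-1},-\rangle$ is false: by group-likeness of $h$ it yields $\langle h^{-1}(e^{1},e^{0},0),-\rangle$, i.e.\ it only sees $\delta(h)$, whereas $\langle\mathcal{T}(h)^{-1},-\rangle$ evaluated on such words also involves the coefficients of monomials of $h^{-1}$ containing $e^{-1}$, because $\mathcal{T}(e^{-1})=-e^{0}-e^{1}-e^{-1}$ reinjects them into words in $e^{0},e^{1}$. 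The ingredient that repairs this, and which the paper invokes at exactly this point, is Proposition \ref{prop:pent_property}(iii), Furusho's duality $g(e_{0},e_{1})g(e_{1},e_{0})=1$ for $g=\delta(h)$: it lets one rewrite $h^{-1}(e_{1},e_{0},0)=h(e_{0},e_{1},0)$, so that on the $z\to\infty$ side the twisted pairing returns to $\langle h,-\rangle$ itself. You mention this duality only as an ``analogue'' on which to model a regularization argument, not as a step; without it the argument cannot close.

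Relatedly, your comparison logic does not work even granting the misidentification: if the $\tau_{z}$-substituted identity were ``the master identity with $h$ replaced by $\mathcal{T}(h)^{-1}$'', comparing the master identity for $h$ with the master identity for a different element forces nothing --- every element of $\mathrm{PENT}(2,{\bf k})$ satisfies the master identity, so it is not an injective constraint in this form. The paper's proof instead substitutes the difference $u=w-\tau_{z}(w)$ for $w\in\mathcal{A}_{z}^{0}$: after Theorem \ref{thm:alg-diff-tauz}, the antipode step, and the $\delta(h)$-duality, the right-hand side of the substituted identity becomes literally the right-hand side of the original identity \emph{for the same} $h$, so the right-hand sides cancel and one concludes $\langle h,w|_{z\to-1}\rangle=\langle h,\tau_{z}(w)|_{z\to-1}\rangle$; only then is the latter identified with $\langle\mathcal{T}(h)^{-1},w|_{z\to-1}\rangle$ by pure Hopf-algebra duality (your three-letter computation, which is correct). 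Finally, the boundary step you flag as the main obstacle is much lighter than you expect: both sides of the proposition are group-like, hence determined as shuffle characters by their values on $\{e_{0},e_{1}\}\cup\mathcal{A}^{0}(\{0,1,-1\})$; the factors $\exp(-\alpha_{h}e^{1})$ and $\exp(-\alpha_{h}e^{0})$ do not affect pairings with words in $\mathcal{A}^{0}(\{0,1,-1\})$, and on $e_{0},e_{1}$ both sides vanish by Proposition \ref{prop:pent_property}(i) together with the definition of $\alpha_{h}$. No delicate tracking of shuffle regularization at $z\to0,\infty$ is required.
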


\begin{proof}
Since the both sides are group-like, it is enough to check
\[
\langle h,u\rangle=\langle\exp(-\alpha_{h}e^{1})\mathcal{T}(h)^{-1}\exp(-\alpha_{h}e^{0}),u\rangle
\]
for $u\in\{e_{0},e_{1}\}\cup\mathcal{A}^{0}(\{0,1,-1\})$. Since $h$
satisfies the mixed pentagon equation, the coefficients of $e^{0}$
and $e^{1}$ in $h$ are zero. Furthermore, by definition, we have
\[
\langle\exp(-\alpha_{h}e^{1})\mathcal{T}(h)^{-1}\exp(-\alpha_{h}e^{0}),e_{0}\rangle=\langle\exp(-\alpha_{h}e^{1})\mathcal{T}(h)^{-1}\exp(-\alpha_{h}e^{0}),e_{1}\rangle=0.
\]
Thus the case $u\in\{e_{0},e_{1}\}$ is proved. By Lemmas \ref{lem:pent_left}
and \ref{lem:pent-right}, we have
\[
\left\langle h,\left.u\right|_{z\to-1}\right\rangle =\sum_{l=0}^{\infty}\sum_{c_{1},\dots,c_{l}\in\{0,1\}}\left\langle h,e_{-c_{1}}\cdots e_{-c_{l}}\right\rangle \cdot\left\langle h,\left.\left(D_{c_{1}}\circ\cdots\circ D_{c_{l}}(u)\right)\right|_{z\to\infty}\right\rangle 
\]
for $u\in\mathcal{A}_{z}^{0}$. By putting $u=w-\tau_{z}(w)$ for
$w\in\mathcal{A}_{z}^{0}$, we have
\begin{align*}
 & \left\langle h,\left.w\right|_{z\to-1}\right\rangle -\left\langle h,\left.\tau_{z}(w)\right|_{z\to-1}\right\rangle \\
 & =\sum_{l=0}^{\infty}\sum_{c_{1},\dots,c_{l}\in\{0,1\}}\left\langle h,e_{-c_{1}}\cdots e_{-c_{l}}\right\rangle \cdot\left\langle h,\left.\left(D_{c_{1}}\circ\cdots\circ D_{c_{l}}(w)\right)\right|_{z\to\infty}\right\rangle \\
 & \quad-\sum_{l=0}^{\infty}\sum_{c_{1},\dots,c_{l}\in\{0,1\}}\left\langle h,e_{-c_{1}}\cdots e_{-c_{l}}\right\rangle \cdot\left\langle h,\left.\left(D_{c_{1}}\circ\cdots\circ D_{c_{l}}\circ\tau_{z}(w)\right)\right|_{z\to\infty}\right\rangle .
\end{align*}
Here, the last term can be calculated as
\begin{align*}
 & \left\langle h,\left.\left(D_{c_{1}}\circ\cdots\circ D_{c_{l}}\circ\tau_{z}(w)\right)\right|_{z\to\infty}\right\rangle \\
 & =\left\langle h,\left.\left(\tau_{z}\circ D_{c_{1}}\circ\cdots\circ D_{c_{l}}(w)\right)\right|_{z\to\infty}\right\rangle \qquad(\text{Theorem \ref{thm:alg-diff-tauz}})\\
 & =\left\langle h^{-1}(e_{1},e_{0},0),\left.\left(D_{c_{1}}\circ\cdots\circ D_{c_{l}}(w)\right)\right|_{z\to\infty}\right\rangle \quad(\text{\ensuremath{h} is group-like})\\
 & =\left\langle h(e_{0},e_{1},0),\left.\left(D_{c_{1}}\circ\cdots\circ D_{c_{l}}(w)\right)\right|_{z\to\infty}\right\rangle \qquad(\text{Duality for \ensuremath{\delta(h)} (Proposition \ref{prop:pent_property})})\\
 & =\left\langle h,\left.\left(D_{c_{1}}\circ\cdots\circ D_{c_{l}}(w)\right)\right|_{z\to\infty}\right\rangle .
\end{align*}
Thus, we have
\[
\left\langle h,\left.w\right|_{z\to-1}\right\rangle -\left\langle h,\left.\tau_{z}(w)\right|_{z\to-1}\right\rangle =0\qquad(w\in\mathcal{A}_{z}^{0}),
\]
which is equivalent to
\[
\langle h,u\rangle=\langle\mathcal{T}(h)^{-1},u\rangle\qquad(u\in\mathcal{A}^{0}(\{0,1,-1\})).
\]
Therefore the proposition is proved.
\end{proof}

\section{\label{sec:proof-of-main}The proof of main theorems}

In this section, we give a proof of main theorem. Let $h\in{\rm PENT}(2,{\bf k})$.
Then, we have
\begin{align}
 & \exp(-\alpha(t^{13}+t_{+}^{23}+t_{-}^{23}))(h^{1,2,3})^{-1}(h{}^{1,23,4})^{-1}\exp(-\alpha(t_{+}^{23}+t_{+}^{24}+t_{+}^{34}))\nonumber \\
 & =\exp(-\alpha(t^{13}+t_{+}^{23}+t_{-}^{23}))(h^{12,3,4})^{-1}(h^{1,2,34})^{-1}\delta(h)^{2,3,4}\exp(-\alpha(t_{+}^{23}+t_{+}^{24}+t_{+}^{34})).\label{eq:pen_e0}
\end{align}
We denote by $\kappa$ the field embedding of $\mathbb{Q}(z,w)$ to
$F_{4,2}=\mathbb{Q}(z_{2},z_{3},z_{4})$ defined by
\[
\kappa(z)=\frac{z_{4}-z_{3}}{z_{4}+z_{3}},\ \kappa(w)=\frac{z_{3}-z_{2}}{z_{3}+z_{2}}.
\]
Define a $\mathbb{Q}$-linear map $g:\mathbb{Q}\left\langle e_{0},e_{-1},e_{z},e_{-z^{2}}\right\rangle \to\mathcal{A}_{4,2}$
by
\[
g(e_{a_{1}}\cdots e_{a_{k}})=\mathbb{I}(0;\kappa(a_{1}/z),\dots,\kappa(a_{k}/z);\kappa(w)).
\]
By (\ref{eq:pen_e0}), we have
\begin{align}
 & \left\langle \exp(-\alpha(t^{13}+t_{+}^{23}+t_{-}^{23}))(h^{1,2,3})^{-1}(h{}^{1,23,4})^{-1}\exp(-\alpha(t_{+}^{23}+t_{+}^{24}+t_{+}^{34})),\psi(g(u))\right\rangle \nonumber \\
 & =\left\langle \exp(-\alpha(t^{13}+t_{+}^{23}+t_{-}^{23}))(h^{12,3,4})^{-1}(h^{1,2,34})^{-1}\delta(h)^{2,3,4}\exp(-\alpha(t_{+}^{23}+t_{+}^{24}+t_{+}^{34})),\psi(g(u))\right\rangle .\label{eq:pen_e0_pair}
\end{align}
In this section, we prove the theorem by computing the both sides
of the above equation for $u\in\mathcal{B}$. Note that the components
of $\mathbb{I}(0;\kappa(a_{1}/z),\dots,\kappa(a_{k}/z);\kappa(w))$
are $0$, $1$, $-z$, $-z^{-1}$, or $w$. We remark that $d\log(\kappa(a)-\kappa(s'))$
for $a,a'\in\{0,1,-z,-z^{-1},w\}$ are given by
\begin{align*}
d\log(\kappa(0)-\kappa(1)) & =0,\\
{\rm d}\log(\kappa(0)-\kappa(-z)) & =d\log(z_{3}-z_{4})-d\log(z_{3}+z_{4}),\\
d\log(\kappa(0)-\kappa(-z^{-1})) & =-d\log(z_{3}-z_{4})+d\log(z_{3}+z_{4}),\\
d\log(\kappa(0)-\kappa(w)) & =d\log(z_{2}-z_{3})-d\log(z_{2}+z_{3}),\\
d\log(\kappa(1)-\kappa(-z)) & =d\log(z_{4})-d\log(z_{3}+z_{4}),\\
d\log(\kappa(1)-\kappa(-z^{-1})) & =d\log(z_{4})-d\log(z_{3}-z_{4}),\\
d\log(\kappa(1)-\kappa(w)) & =d\log(z_{2})-d\log(z_{2}+z_{3}),\\
d\log(\kappa(-z)-\kappa(-z^{-1})) & =d\log(z_{3})+d\log(z_{4})-d\log(z_{3}-z_{4})-d\log(z_{3}+z_{4}),\\
d\log(\kappa(-z)-\kappa(w)) & =d\log(z_{3})+d\log(z_{2}-z_{4})-d\log(z_{2}+z_{3})-d\log(z_{3}+z_{4}),\\
d\log(\kappa(-z^{-1})-\kappa(w)) & =d\log(z_{3})+d\log(z_{2}+z_{4})-d\log(z_{2}+z_{3})-d\log(z_{3}-z_{4}).
\end{align*}

\begin{lem}
\label{lem:B'_is_stable}Put $\mathcal{B}'=\mathbb{Q}\oplus\bigoplus_{a\in\{-1,z,-z^{2}\}}e_{a}\mathbb{Q}\langle e_{0},e_{-1},e_{z},e_{-z^{2}}\rangle$.
Then for $s\in\mathfrak{t}_{4,2}^{(1)}$, we have
\[
\partial^{(s)}\circ g(\mathcal{B}')\subset g(\mathcal{B}').
\]
\end{lem}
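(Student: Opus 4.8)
The plan is to verify the inclusion on each basis word of $\mathcal{B}'$, tracking which argument of the iterated integral symbol can become the new leading argument after applying $\partial^{(s)}$. First I would record the dictionary induced by $g$: a letter $e_a$ with $a\in\{0,-1,z,-z^{2}\}$ contributes the argument $\kappa(a/z)$, that is $\kappa(0)=0$, $\kappa(-z^{-1})$, $\kappa(1)$, $\kappa(-z)$ respectively. Since $\kappa$ is a field embedding these four values are distinct in $F_{4,2}$ and the last three are nonzero, so $a=0$ is the unique letter whose argument vanishes. Consequently $g$ is injective on words, and $g(\mathcal{B}')$ is precisely the span of $\mathbb{I}(0;\kappa(a_1/z),\dots,\kappa(a_k/z);\kappa(w))$ for which either $k=0$ or the first argument $\kappa(a_1/z)$ is nonzero (equivalently $a_1\neq0$). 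It therefore suffices to show, for each such generator and each $s\in\mathfrak{t}_{4,2}^{(1)}$, that $\partial^{(s)}g(w)$ is a linear combination of symbols whose first interior argument is again nonzero.

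Next I would invoke the defining formula for $\partial^{(s)}$, which deletes one interior argument $\kappa(a_i/z)$ with weight $\sum_{r\in\{\pm1\}}r\langle s,d\log(\kappa(a_{i+r}/z)-\kappa(a_i/z))\rangle$. If the deleted argument is not the first one (i.e.\ $i\geq2$), the resulting symbol still begins with $\kappa(a_1/z)\neq0$, so it lies in $g(\mathcal{B}')$. The cases $k=0$ and $k=1$ are immediate, since then $\partial^{(s)}g(w)$ is either $0$ or a multiple of $\mathbb{I}(0;;\kappa(w))=g(1)\in g(\mathcal{B}')$. Hence the only way to produce a symbol with leading argument $0$ is to delete the first argument $\kappa(a_1/z)$ when $k\geq2$ and $a_2=0$, so that the new leading argument becomes $\kappa(a_2/z)=\kappa(0)=0$.

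The crux is to show that this dangerous deletion enters with coefficient zero. When $a_2=0$, the right neighbour of the first argument is $\kappa(a_2/z)=\kappa(0)=0$, which coincides with the left endpoint $a_0=0$ of the iterated integral. Thus the weight of deleting the first argument is
\[
\langle s,d\log(\kappa(a_2/z)-\kappa(a_1/z))\rangle-\langle s,d\log(0-\kappa(a_1/z))\rangle=\langle s,d\log(-\kappa(a_1/z))\rangle-\langle s,d\log(-\kappa(a_1/z))\rangle=0,
\]
the two terms cancelling because both neighbours equal $0$. This annihilates every contribution that could leave $g(\mathcal{B}')$, which gives the claim. The main obstacle is simply to isolate this cancellation cleanly; once one notices that $\kappa(0)=0$ is exactly the left endpoint of $\mathbb{I}(0;\dots;\kappa(w))$, the signed sum over $r=\pm1$ built into the definition of $\partial^{(s)}$ forces the offending term to vanish.
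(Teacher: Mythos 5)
Your proof is correct and follows the same route the paper intends: the paper's proof is just ``It follows from the definition of $\partial^{(s)}$,'' and your verification spells out exactly that check, including the one non-obvious point that deleting the leading argument $\kappa(a_1/z)$ when $a_2=0$ carries coefficient $\langle s,d\log(0-\kappa(a_1/z))\rangle-\langle s,d\log(0-\kappa(a_1/z))\rangle=0$ because the right neighbour then coincides with the left endpoint $0$. Nothing is missing; this is precisely the cancellation the paper leaves implicit.
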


\begin{proof}
It follows from the definition of $\partial^{(s)}$.
\end{proof}
\begin{lem}
\label{lem:left_step1}For $h\in{\rm PENT}(2,{\bf k})$ and $u\in\mathcal{B}$,
we have

\[
\left\langle (h^{1,2,3})^{-1}(h{}^{1,23,4})^{-1}\exp(-\alpha(t_{+}^{23}+t_{+}^{24}+t_{+}^{34})),\psi(g(u))\right\rangle =\left\langle h,u\mid_{z\to1}\right\rangle .
\]
\end{lem}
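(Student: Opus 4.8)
The plan is to evaluate the left-hand side by the same bar-type recursion used in the proofs of Lemmas \ref{lem:pent_left} and \ref{lem:pent-right}: expand the three-factor product into words in the generators of $\mathfrak{t}_{4,2}^{(1)}$ and apply the recursion formulas (\ref{eq:pair_1_psi}) and (\ref{eq:pair_s_psi}) repeatedly, reading off each operator $\partial^{(s)}\circ g$ from the list of forms $d\log(\kappa(a)-\kappa(a'))$ displayed above. First I would classify the degree-one generators according to whether they carry a pole along $z_2$: the \emph{boundary} generators $t^{12},t_{+}^{23},t_{-}^{23},t_{+}^{24},t_{-}^{24}$ do, and by the table they can only act at the letter adjacent to the right endpoint $\kappa(w)$, whereas the \emph{interior} generators $t^{13},t^{14},t_{+}^{34},t_{-}^{34}$ act on interior adjacent pairs. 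The crucial structural point, visible in the table, is that \emph{every} boundary form $d\log(\kappa(a)-\kappa(w))$ contains the common summand $-d\log(z_2+z_3)$; consequently $\partial^{(t_{-}^{23})}\circ g$ removes the rightmost letter of any word unconditionally (with coefficient $-1$), while $\partial^{(t^{12})}\circ g$ and $\partial^{(t_{+}^{23})}\circ g$ remove it only when it equals $e_z$, respectively $e_0$. This unconditional removal is the source of the regularization that the factor $\exp(-\alpha(\cdots))$ is designed to absorb.

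Next I would peel the factors from left to right, exactly as the first factor consumed the tail in Lemma \ref{lem:pent_left}. Because $h^{1,2,3}=h(t^{12},t_{+}^{23},t_{-}^{23})$ is built solely from boundary generators, peeling $(h^{1,2,3})^{-1}$ first consumes a suffix of $g(u)$ one letter at a time, each removed letter being translated into a letter of the antipode of $h$ through the dictionary $e_z\mapsto e^{0}$, $e_0\mapsto e^{1}$, together with the catch-all reading ``any letter $\mapsto -e^{-1}$''. The remaining prefix, which stays inside $g(\mathcal{B}')$ by Lemma \ref{lem:B'_is_stable}, is then paired against $(h^{1,23,4})^{-1}$ and $\exp(-\alpha(t_{+}^{23}+t_{+}^{24}+t_{+}^{34}))$; here I would use the group-likeness of $h$ to turn the accumulated deconcatenation sum into products of pairings of the shape $\langle h,\cdot\rangle$, and Proposition \ref{prop:pent_property} (the vanishing $\langle h,e_0\rangle=\langle h,e_1\rangle=0$ and the duality relation for $\delta(h)$) to discard the inadmissible and dual pieces.

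The main obstacle is precisely this catch-all behaviour of the $t_{-}$-type generators: every letter of $u$ admits a spurious alternative reading as $-e^{-1}$, so the naive expansion produces a large number of boundary terms, and even the diagonal readings $e_z\mapsto e^{0}$, $e_0\mapsto e^{1}$ produced by $(h^{1,2,3})^{-1}$ alone do not match the target substitution $\mid_{z\to1}$ (which sends $e_z\mapsto e_1$, $e_0\mapsto e_0$, and $e_{-1},e_{-z^2}\mapsto e_{-1}$), so the reconciliation genuinely requires the interplay of all three factors. The heart of the argument is to show that, once the factor $\exp(-\alpha(\cdots))$ with $\alpha=\langle h,e_{-1}\rangle$ is taken into account, all these spurious contributions cancel telescopically, in the same way the regularizing exponentials do in the Broadhurst duality of Proposition \ref{prop:BroadhurstDuality}, leaving exactly the single clean term $\langle h,u\mid_{z\to1}\rangle$. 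I expect this collapse to be the only delicate step; everything else is the routine bookkeeping already illustrated in Lemmas \ref{lem:pent_left} and \ref{lem:pent-right}.
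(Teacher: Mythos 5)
Your computational setup is sound (the peeling order, the table of pairings, the observation that $t_{-}^{23}$ removes the rightmost letter of $g(u)$ unconditionally with coefficient $-1$), and you have correctly located the difficulty: the direct expansion of $(h^{1,2,3})^{-1}$ reads letters through $e_z\mapsto e^{0}$, $e_0\mapsto e^{1}$ plus a catch-all $-e^{-1}$, which does not match $u\mapsto u|_{z\to1}$. But your proposed resolution --- a telescopic cancellation of the spurious readings supplied by $(h^{1,23,4})^{-1}$ and $\exp(-\alpha(t_{+}^{23}+t_{+}^{24}+t_{+}^{34}))$ --- is both unproven in your write-up (``I expect this collapse'') and wrong as a mechanism, because those two factors are completely inert. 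Each of the four elements $s\in\{t^{12}+t^{13}+t_{+}^{23}+t_{-}^{23},\,t_{+}^{24}+t_{+}^{34},\,t_{-}^{24}+t_{-}^{34},\,t_{+}^{23}+t_{+}^{24}+t_{+}^{34}\}$ satisfies $\partial^{(s)}(g(e_a))=0$ for every $a\in\{-1,z,-z^{2}\}$ (the $-d\log(z_2+z_3)$ term in each boundary form is cancelled inside these particular combinations), so by Lemma \ref{lem:B'_is_stable} a word in $g(\mathcal{B}')$ can never be fully consumed by them; hence only the constant term $1$ of $(h^{1,23,4})^{-1}\exp(-\alpha(\cdots))$ contributes, and the left-hand side equals $\left\langle (h^{1,2,3})^{-1},\psi(g(u))\right\rangle$ exactly. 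No cancellation can come from the other factors: the whole content of the lemma is the single-factor identity $\left\langle (h^{1,2,3})^{-1},\psi(g(u))\right\rangle=\left\langle h,u|_{z\to1}\right\rangle$.

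That identity is not formal bookkeeping: for a generic group-like $h$ with vanishing $e^{0},e^{1}$ coefficients it already fails in weight two (take $u=e_{-1}e_0\in\mathcal{B}$; the expansion gives $-[h^{-1}]_{e^{1}e^{-1}}+[h^{-1}]_{e^{-1}e^{-1}}$, and equating this with $[h]_{e^{0}e^{-1}}$ is a genuine constraint on $h$, not a consequence of shuffle relations). The missing input is precisely Proposition \ref{prop:BroadhurstDuality} used as an identity, not as an analogy: substituting
\[
(h^{1,2,3})^{-1}=\exp(\alpha t^{12})\,h(t_{+}^{23},\,t^{12},\,-t^{12}-t_{+}^{23}-t_{-}^{23})\,\exp(\alpha t_{+}^{23})
\]
before expanding eliminates the spurious readings altogether: the $e^{-1}$-slot becomes $-t^{12}-t_{+}^{23}-t_{-}^{23}$, whose removal coefficient $1-\delta_{a_k,z}-\delta_{a_k,0}=\delta_{a_k,-1}+\delta_{a_k,-z^{2}}$ deletes exactly the letters $e_{-1},e_{-z^{2}}$, while the $e^{0}$- and $e^{1}$-slots $t_{+}^{23}$ and $t^{12}$ delete exactly $e_0$ and $e_z$ --- this is literally the substitution $z\to1$ --- and the flanking exponentials act trivially because words in $\mathcal{B}$ have $a_1\neq 0$ and $a_k\neq z$. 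So to repair your proof you must invoke the Broadhurst duality of Section \ref{sec:Broadhurst-duality} (which is where the pentagon equation actually enters); without it the collapse you hope for has no source.
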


\begin{proof}
First note that
\[
h^{1,23,4}=h(t^{12}+t^{13}+t_{+}^{23}+t_{-}^{23},t_{+}^{24}+t_{+}^{34},t_{-}^{24}+t_{-}^{34}).
\]
Furthermore, $\partial^{(s)}(g(e_{a}))=0$ for any $s\in\{t^{12}+t^{13}+t_{+}^{23}+t_{-}^{23},t_{+}^{24}+t_{+}^{34},t_{-}^{24}+t_{-}^{34},t_{+}^{23}+t_{+}^{24}+t_{+}^{34}\}$
and $a\in\{z,-1,-z^{2}\}$. Thus, by Lemma \ref{lem:B'_is_stable},
we have
\[
\left\langle (h^{1,2,3})^{-1}(h{}^{1,23,4})^{-1}\exp(-\alpha(t_{+}^{23}+t_{+}^{24}+t_{+}^{34})),g(u)\right\rangle =\left\langle (h^{1,2,3})^{-1},g(u)\right\rangle 
\]
for $u\in\mathcal{B}$. By Broadhurst duality (Proposition \ref{prop:BroadhurstDuality}),
we have
\begin{equation}
(h^{1,2,3})^{-1}=\exp(\alpha t^{12})h(t_{+}^{23},t^{12},-t^{12}-t_{+}^{23}-t_{-}^{23})\exp(\alpha t_{+}^{23}).\label{eq:h123^-1}
\end{equation}
Then we have
\begin{align*}
\partial^{(t_{+}^{23})}\circ g(e_{a_{1}}\cdots e_{a_{k}}) & =\delta_{a_{k},0}g(e_{a_{1}}\cdots e_{a_{k-1}}),\\
\partial^{(t_{12})}\circ g(e_{a_{1}}\cdots e_{a_{k}}) & =\delta_{a_{k},z}g(e_{a_{1}}\cdots e_{a_{k-1}}),\\
\partial^{(-t^{12}-t_{+}^{23}-t_{-}^{23})}\circ g(e_{a_{1}}\cdots e_{a_{k}}) & =(\delta_{a_{k},-1}+\delta_{a_{k},-z^{2}})g(e_{a_{1}}\cdots e_{a_{k-1}})
\end{align*}
for $a_{1},\dots,a_{k}\in\{0,1,-z,-z^{2}\}$. Thus, for $e_{a_{1}}\cdots e_{a_{k}}\in\mathcal{B}$,
we have
\begin{align*}
 & \left\langle (h^{1,2,3})^{-1},\psi(g(e_{a_{1}}\cdots e_{a_{k}}))\right\rangle \\
 & =\left\langle \exp(\alpha t^{12})h(t_{+}^{23},t^{12},-t^{12}-t_{+}^{23}-t_{-}^{23})\exp(\alpha t_{+}^{23}),\psi(g(e_{a_{1}}\cdots e_{a_{k}}))\right\rangle \qquad(\text{by (\ref{eq:h123^-1})})\\
 & =\left\langle h(t_{+}^{23},t^{12},-t^{12}-t_{+}^{23}-t_{-}^{23})\exp(\alpha t_{+}^{23}),\psi(g(e_{a_{1}}\cdots e_{a_{k}}))\right\rangle \qquad(\text{by \ensuremath{a_{k}\neq z}})\\
 & =\sum_{j=0}^{k}\left\langle h,\left.(e_{a_{j+1}}\cdots e_{a_{k}})\right|_{z\to1}\right\rangle \cdot\left\langle \exp(\alpha t_{+}^{23}),\psi(g(e_{a_{1}}\cdots e_{a_{j}}))\right\rangle \\
 & =\sum_{j=0}^{k}\left\langle h,\left.(e_{a_{j+1}}\cdots e_{a_{k}})\right|_{z\to1}\right\rangle \cdot\delta_{j,0}\qquad(\text{by \ensuremath{a_{1}\neq0}})\\
 & =\left\langle h,\left.(e_{a_{1}}\cdots e_{a_{k}})\right|_{z\to1}\right\rangle .
\end{align*}
Hence the lemma is proved.
\end{proof}
\begin{lem}
\label{lem:left_step2}For $h\in{\rm PENT}(2,{\bf k})$ and $u\in\mathcal{B}$,
we have

\[
\left\langle \exp(-\alpha(t^{13}+t_{+}^{23}+t_{-}^{23}))(h^{1,2,3})^{-1}(h{}^{1,23,4})^{-1}\exp(-\alpha(t_{+}^{23}+t_{+}^{24}+t_{+}^{34})),\psi(g(u))\right\rangle =\left\langle h,u\mid_{z\to1}\right\rangle .
\]
\end{lem}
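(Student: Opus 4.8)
The plan is to deduce Lemma~\ref{lem:left_step2} from Lemma~\ref{lem:left_step1} by showing that the extra left factor $\exp(-\alpha(t^{13}+t_{+}^{23}+t_{-}^{23}))$ leaves the pairing unchanged. Write $s=t^{13}+t_{+}^{23}+t_{-}^{23}\in\mathfrak{t}_{4,2}^{(1)}$ and $X=(h^{1,2,3})^{-1}(h^{1,23,4})^{-1}\exp(-\alpha(t_{+}^{23}+t_{+}^{24}+t_{+}^{34}))$, so that the left-hand side is $\langle\exp(-\alpha s)\,X,\psi(g(u))\rangle$. Peeling off one power of $s$ at a time by the recursion (\ref{eq:pair_s_psi}), I would move the exponential across the pairing to get
\[
\langle\exp(-\alpha s)\,X,\psi(g(u))\rangle=\langle X,\psi(\exp(-\alpha\partial^{(s)})g(u))\rangle .
\]
Because deleting an entry of an iterated integral corresponds to deleting the matching letter of a word, one has $\partial^{(s)}\circ g=g\circ D$ for a letter-removing operator $D$ on $\mathbb{Q}\langle e_{0},e_{-1},e_{z},e_{-z^{2}}\rangle$, and hence the right-hand side equals $\langle X,\psi(g(\exp(-\alpha D)u))\rangle$.

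The next step is to identify $D$ explicitly from the $d\log$ table preceding the lemma. Pairing $s$ against each $d\log(\kappa(a)-\kappa(a'))$, the only nonzero values occur for the pair $\{\kappa(-z),\kappa(-z^{-1})\}$ (value $+1$) and the pair $\{\kappa(1),\kappa(w)\}$ (value $-1$); since $\kappa(w)$ appears only as the right endpoint and $a_{k}\neq z$ for $u\in\mathcal{B}$, the second pair never activates. Thus, writing $\chi$ for the indicator of the letter pair $\{-1,-z^{2}\}$, the operator $D$ deletes a letter $e_{a_{i}}$ with coefficient $c_{i}=\chi(a_{i},a_{i+1})-\chi(a_{i-1},a_{i})$, so it only removes letters in $\{-1,-z^{2}\}$ adjacent to the other such letter. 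A short boundary check (the left endpoint is $\kappa(0)$, and $a_{k}\neq z$ is preserved under deletion) shows $D(\mathcal{B})\subseteq\mathcal{B}$, so Lemma~\ref{lem:left_step1} applies to $g(\exp(-\alpha D)u)$ and yields
\[
\langle X,\psi(g(\exp(-\alpha D)u))\rangle=\langle h,(\exp(-\alpha D)u)\mid_{z\to1}\rangle .
\]

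It then remains to prove $(\exp(-\alpha D)u)\mid_{z\to1}=u\mid_{z\to1}$, equivalently $(D^{m}u)\mid_{z\to1}=0$ for all $m\geq1$. The key observation is that the specialization $\mid_{z\to1}$ sends both $e_{-z^{2}}$ and $e_{-1}$ to the single letter $e_{-1}$. Grouping the deletions in $Dv$ according to the maximal run of consecutive letters in $\{-1,-z^{2}\}$ from which a letter is removed, every deletion inside one run produces, after $\mid_{z\to1}$, one and the same word (that run of $e_{-1}$'s shortened by one), while the coefficients telescope to $\sum_{i\in\mathrm{run}}c_{i}=0$. Hence $(Dv)\mid_{z\to1}=0$ for every $v\in\mathcal{B}$, and together with $D(\mathcal{B})\subseteq\mathcal{B}$ this gives $(D^{m}u)\mid_{z\to1}=0$ for $m\geq1$, completing the proof. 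I expect this final telescoping-after-specialization step to be the main obstacle, since it is the only place where the precise coefficients of $D$ and the collapse $e_{-z^{2}},e_{-1}\mapsto e_{-1}$ must be combined; the earlier steps are bookkeeping with (\ref{eq:pair_s_psi}) and the $d\log$ table.
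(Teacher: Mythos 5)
Your proposal is correct and follows essentially the same route as the paper: the operator $D$ you extract from the $d\log$ table is exactly the confluence operator $\partial_{1}$ of Section~\ref{sec:confluence}, and the paper likewise commutes $\exp(-\alpha(t^{13}+t_{+}^{23}+t_{-}^{23}))$ through the pairing via (\ref{eq:pair_s_psi}), invokes Lemma~\ref{lem:left_step1} on $(\partial_{1})^{n}(u)\in\mathcal{B}$, and kills all $n\geq1$ terms by the vanishing of $(\partial_{1}(u))\mid_{z\to1}$. Your run-telescoping argument for that vanishing is just a regrouped form of the paper's observation that each deletion pair contributes $e_{a_{1}}\cdots e_{a_{i-1}}(e_{a_{i+1}}-e_{a_{i}})e_{a_{i+2}}\cdots e_{a_{n}}$, which dies under $e_{-1},e_{-z^{2}}\mapsto e_{-1}$.
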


\begin{proof}
By definition, for $e_{a_{1}}\cdots e_{a_{k}}\in\mathcal{B}$, we
have
\begin{align*}
\partial^{(t^{13}+t_{+}^{23}+t_{-}^{23})}(g(e_{a_{1}}\cdots e_{a_{n}})) & =g(\partial_{1}(e_{a_{1}}\cdots e_{a_{n}}))
\end{align*}
where 
\[
\partial_{1}:\mathcal{B}\to\mathcal{B}\quad;\quad e_{a_{1}}\cdots e_{a_{n}}\mapsto\sum_{i=1}^{n-1}\delta_{\{a_{i},a_{i+1}\},\{-1,-z^{2}\}}e_{a_{1}}\cdots e_{a_{i-1}}(e_{a_{i+1}}-e_{a_{i}})e_{a_{i+2}}\cdots e_{a_{n}}
\]
 is the operator defined in Section \ref{sec:confluence}. By definition
of $\partial_{1}$, we have
\[
\partial_{1}(\mathcal{B})\subset\ker(\mathcal{B}\xrightarrow{u\mapsto u\mid_{z\to1}}\mathfrak{h}_{2})
\]
and therefore
\begin{equation}
\left.\left((\partial_{1})^{n}(u)\right)\right|_{z\to1}=0\label{eq:vanish1}
\end{equation}
for $u\in\mathcal{B}$ and $n>0$. Hence, for $u\in\mathcal{B}$,
we have
\begin{align*}
 & \left\langle \exp(-\alpha(t^{13}+t_{+}^{23}+t_{-}^{23}))(h^{1,2,3})^{-1}(h{}^{1,23,4})^{-1}\exp(-\alpha(t_{+}^{23}+t_{+}^{24}+t_{+}^{34})),\psi(g(u))\right\rangle \\
 & =\sum_{n=0}^{\infty}\frac{(-\alpha)^{n}}{n!}\langle(h^{1,2,3})^{-1}(h{}^{1,23,4})^{-1}\exp(-\alpha(t_{+}^{23}+t_{+}^{24}+t_{+}^{34})),\psi\circ g\circ(\partial_{1})^{n}(u)\rangle\\
 & =\sum_{n=0}^{\infty}\frac{(-\alpha)^{n}}{n!}\left\langle h,\left.\left((\partial_{1})^{n}(u)\right)\right|_{z\to1}\right\rangle \qquad(\text{Lemma \ref{lem:left_step2}})\\
 & =\left\langle h,u\mid_{z\to1}\right\rangle \qquad(\text{by (\ref{eq:vanish1})}),
\end{align*}
which completes the proof.
\end{proof}
\begin{lem}
\label{lem:pair_h234}Let $\lambda:\mathbb{Q}\left\langle e_{0},e_{-1},e_{z},e_{-z^{2}}\right\rangle \to\mathfrak{h}_{2}$
be a ring homomorphism defined by
\[
\lambda(e_{-1})=\lambda(e_{z})=0,\ \lambda(e_{0})=e_{0}-e_{1},\ \lambda(e_{-z^{2}})=-e_{1}.
\]
Then, for $u\in\mathbb{Q}\langle e_{0},e_{-1},e_{-z},e_{-z^{2}}\rangle$,
we have
\[
\left\langle \delta(h)^{2,3,4},\psi(g(u))\right\rangle =\left\langle h,\lambda(u)\right\rangle .
\]
\end{lem}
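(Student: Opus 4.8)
The plan is to evaluate the pairing directly through the recursion formulas (\ref{eq:pair_1_psi}) and (\ref{eq:pair_s_psi}), exactly in the spirit of Lemmas \ref{lem:pent_left}, \ref{lem:pent-right} and \ref{lem:left_step1}. First I would record, as in Lemma \ref{lem:pent-right}, that the cabling gives $\delta(h)^{2,3,4}=h(t_+^{23},t_+^{34},0)$, so that $\delta(h)^{2,3,4}$ is a group-like series in the two generators $t_+^{23}$ and $t_+^{34}$ only. Consequently the pairing against $\psi(g(u))$ is controlled entirely by the two operators $\partial^{(t_+^{23})}$ and $\partial^{(t_+^{34})}$: in the degree-one duality, $t_+^{23}$ and $t_+^{34}$ detect only the forms $d\log(z_2-z_3)$ and $d\log(z_3-z_4)$, while every other $d\log(z_i)$ and $d\log(z_i\pm z_j)$ pairs to zero.

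Next I would read off the action of these two operators on $g$ from the displayed table of $d\log(\kappa(a)-\kappa(a'))$. Among all available edges the form $d\log(z_2-z_3)$ occurs only in $d\log(\kappa(0)-\kappa(w))$, i.e. at the terminal edge joining the last point to $\kappa(w)$; hence $\partial^{(t_+^{23})}\circ g(e_{a_1}\cdots e_{a_k})=\delta_{a_k,0}\,g(e_{a_1}\cdots e_{a_{k-1}})$, a clean ``delete a trailing $e_0$''. By contrast $d\log(z_3-z_4)$ occurs in several edges --- those incident to the point $\kappa(-z^{-1})$ (with weight $-1$) together with the edge $\{\kappa(0),\kappa(-z)\}$ (with weight $+1$) --- so $\partial^{(t_+^{34})}\circ g=g\circ E$ for an explicit signed edge-removal operator $E$ on $\mathbb{Q}\langle e_0,e_{-1},e_z,e_{-z^2}\rangle$. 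I would also check, as in Lemmas \ref{lem:A1_is_stable} and \ref{lem:B'_is_stable}, that $g$ of the whole space is stable under both operators.

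Feeding these formulas into the recursion and summing over the $\{e^0,e^1\}$-words of $h(t_+^{23},t_+^{34},0)$ (words containing $e^{-1}$ drop out, since $e^{-1}\mapsto 0$), I would obtain $\langle\delta(h)^{2,3,4},\psi(g(u))\rangle=\langle h,\Lambda(u)\rangle$ for an explicit $\mathbb{Q}$-linear map $\Lambda$ assembled from the two operators. The final step is to identify this with $\langle h,\lambda(u)\rangle$, and here I expect $\Lambda$ not to be literally the ring homomorphism $\lambda$: already $\Lambda(e_0)=e_0$ and $\Lambda(e_{-z^2})=-e_1$ in length one, while the quadratic pieces produced by the interior edges of $E$ differ from the $\lambda$-values by shuffle products involving $e_1$ (for instance $\Lambda(e_0e_{-z^2})=-e_0e_1-e_1^2$ against $\lambda(e_0e_{-z^2})=-e_0e_1+e_1^2$). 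These discrepancies are annihilated by $\langle h,\cdot\rangle$ because $h\in\mathrm{PENT}(2,{\bf k})$ is group-like with $\langle h,e_0\rangle=\langle h,e_1\rangle=0$ (Proposition \ref{prop:pent_property}(i)): the map $\langle h,\cdot\rangle$ is then a shuffle character vanishing on $e_0$ and $e_1$, hence on the shuffle ideal they generate (e.g. $\langle h,e_1^2\rangle=\tfrac12\langle h,e_1\rangle^2=0$), which is precisely what upgrades the combinatorial $\Lambda$ to the clean $\lambda$.

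The hard part will be the operator $E=\partial^{(t_+^{34})}$: unlike the terminal-edge operators of the earlier lemmas, it picks up interior and both-endpoint edges, so it is not a simple trailing-letter deletion and the telescoping of the recursion is genuinely more intricate. Organizing this bookkeeping so that the total contribution collapses, after invoking the shuffle relations for $h$, to the multiplicative $\lambda$ with $\lambda(e_0)=e_0-e_1$, $\lambda(e_{-z^2})=-e_1$, $\lambda(e_{-1})=\lambda(e_z)=0$ is where the real work lies; in particular the vanishing $\lambda(e_{-1})=\lambda(e_z)=0$ should reflect that every edge weight attached to the points $\kappa(-z^{-1})$ and $\kappa(1)$ either cancels telescopically or is killed by $\langle h,\cdot\rangle$.
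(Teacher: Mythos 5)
Your plan is the same as the paper's proof: reduce to $\delta(h)^{2,3,4}=h(t_{+}^{23},t_{+}^{34},0)$, read off the degree-one pairings of $t_{+}^{23}$ and $t_{+}^{34}$ against the forms $d\log(\kappa(a)-\kappa(a'))$, and convert the resulting recursion into the homomorphism $\lambda$. Your individual computations are correct: $\partial^{(t_{+}^{23})}\circ g$ deletes a trailing $e_{0}$; $t_{+}^{34}$ sees only the edge $\{\kappa(0),\kappa(-z)\}$ (weight $+1$) and the edges at $\kappa(-z^{-1})$ (weight $-1$); and your sample discrepancy $\Lambda(e_{0}e_{-z^{2}})=-e_{0}e_{1}-e_{1}^{2}$ versus $\lambda(e_{0}e_{-z^{2}})=-e_{0}e_{1}+e_{1}^{2}$ is exactly what one finds. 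You are also right --- and this is a point the paper's own two-line conclusion hides inside its ``$-1$-shifted'' pairing table --- that the identity fails for a general series $h$: already for $u=e_{0}$ the left side equals $\langle h,e_{0}\rangle$ while the right side equals $\langle h,e_{0}\rangle-\langle h,e_{1}\rangle$, so group-likeness and the vanishing $\langle h,e_{0}\rangle=\langle h,e_{1}\rangle=0$ of Proposition \ref{prop:pent_property} must genuinely enter.

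The genuine gap is the step you yourself flag: you never show that the combinatorial $\Lambda$ agrees with $\lambda$ after pairing with $h$, i.e.\ that $\Lambda(u)-\lambda(u)$ is killed by every such character. The gap is closable, so the approach does not fail; here is the missing organization. Since $\sum_{r\in\{\pm1\}}r=0$, one may subtract $1$ from every $t_{+}^{34}$-weight provided the coincident-point terms $a_{i\pm1}=a_{i}$ (which the definition of $\partial^{(s)}$ omits) are reinstated with the shifted weight $-1$. On words in $e_{0},e_{-z^{2}}$ the shifted weight is $0$ on the interior edge $\{0,-z\}$ and $-1$ on terminal edges and on coincident pairs. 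The terminal part is precisely the operator dual to $e^{1}$ for $\lambda$, namely $E_{1}:u'e_{a}\mapsto-u'$; the coincident part is an operator $S$ which, by the telescoping you anticipate (inside each block of equal letters the $\pm1$'s cancel, and only the block of $e_{0}$'s attached to the initial point $a_{0}=0$ survives), is exactly ``delete one leading $e_{0}$''. Now $S$ acts at the left end of words while $E_{0},E_{1}$ act at the right end, so they commute; expanding $\partial^{(t_{+}^{34})}\circ g=g\circ(E_{1}+S)$ in the pairing and pulling all $S$'s out, the terms with $t\geq1$ occurrences of $S$ re-sum over all insertions of $t$ letters $e^{1}$ into a fixed word of $h$, hence carry the factor $\langle h,v\shuffle e_{1}^{t}\rangle=\langle h,v\rangle\langle h,e_{1}\rangle^{t}/t!=0$; only $t=0$ survives, which is $\langle h,\lambda(u)\rangle$. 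Finally, for words containing $e_{-1}$ or $e_{z}$ the correct argument is not telescoping along the word but along the removal process: any complete reduction to the empty word must at some stage delete the last remaining point of $\kappa(\{-z^{-1},1\})$, and at that stage both of its neighbours differ from it, so its removal coefficient is $0$ for both $t_{+}^{23}$ and $t_{+}^{34}$; hence the whole pairing vanishes, matching $\lambda(e_{-1})=\lambda(e_{z})=0$.
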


\begin{proof}
First, note that 
\begin{align*}
\delta(h)^{2,3,4} & =h(t_{+}^{23},t_{+}^{34},0).
\end{align*}
Since
\[
\left\langle t_{+}^{23},d\log((-z^{-1})-a)\right\rangle =0,\quad\left\langle t_{+}^{34},d\log((-z^{-1})-a)\right\rangle =-1
\]
for $a\in\{0,1,-z,w\}$, we have
\[
\left\langle \delta(h)^{2,3,4},f(e_{a_{1}}\cdots e_{a_{k}})\right\rangle =0.
\]
if $a_{j}=-z^{-1}$ for some $j$. Furthermore, since
\[
\left\langle t_{+}^{23},d\log(1-a)\right\rangle =\left\langle t_{+}^{34},d\log(1-a)\right\rangle =0
\]
for $a\in\{0,-z,w\}$, we have
\[
\left\langle \delta(h)^{2,3,4},g(e_{a_{1}}\cdots e_{a_{k}})\right\rangle =0
\]
if $a_{j}\in\{-z^{-1},1\}$ for some $j$. Since
\begin{align*}
\left\langle t_{+}^{23},d\log(0-(-z))\right\rangle  & =0\\
\left\langle t_{+}^{23},d\log(0-w)\right\rangle  & =1\\
\left\langle t_{+}^{23},d\log((-z)-w)\right\rangle  & =0\\
\left\langle t_{+}^{34},d\log(0-(-z))\right\rangle -1 & =0\\
\left\langle t_{+}^{34},d\log(0-w)\right\rangle -1 & =-1\\
\left\langle t_{+}^{34},d\log((-z)-w)\right\rangle -1 & =-1,
\end{align*}
we have
\[
\left\langle \delta(h)^{2,3,4},g(u)\right\rangle =\left\langle h,\lambda(u)\right\rangle ,
\]
which completes the proof.
\end{proof}
\begin{lem}
\label{lem:pair_exp(alpha_t12)}For $u\in\mathcal{B}''$, we have
\[
\left\langle \exp(\alpha t^{12})\mathcal{T}(h^{1,2,34})\delta(h)^{2,3,4},\psi(g(u))\right\rangle =\left\langle h,{\rm reg}_{z\to0}(u)\right\rangle .
\]
\end{lem}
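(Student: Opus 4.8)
The plan is to evaluate the pairing by peeling the three factors off from the left with the recursions (\ref{eq:pair_1_psi}) and (\ref{eq:pair_s_psi}), after first simplifying the operator. Broadhurst duality (Proposition \ref{prop:BroadhurstDuality}) rearranges to $\mathcal{T}(h)=\exp(-\alpha e^0)h^{-1}\exp(-\alpha e^1)$, so applying $x\mapsto x^{1,2,34}$ and using $(e^0)^{1,2,34}=t^{12}$ and $(e^1)^{1,2,34}=t_+^{23}+t_+^{24}$ yields $\mathcal{T}(h^{1,2,34})=\exp(-\alpha t^{12})(h^{1,2,34})^{-1}\exp(-\alpha(t_+^{23}+t_+^{24}))$. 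The leading $\exp(\alpha t^{12})$ therefore cancels, and it suffices to compute
\[
\left\langle (h^{1,2,34})^{-1}\exp(-\alpha(t_+^{23}+t_+^{24}))\delta(h)^{2,3,4},\psi(g(u))\right\rangle .
\]

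Next, I would tabulate the elementary actions $\partial^{(s)}\circ g$ from the list of $d\log(\kappa(a)-\kappa(a'))$ together with the pairings $\langle t^{1j},d\log z_{j'}\rangle=\delta_{jj'}$ and $\langle t_\pm^{ij},d\log(z_i\mp z_j)\rangle=1$. Writing $(h^{1,2,34})^{-1}=h^{-1}(t^{12},t_+^{23}+t_+^{24},t_-^{23}+t_-^{24})$, I expect $\partial^{(t^{12})}$ to strip a trailing $e_z$, $\partial^{(t_+^{23}+t_+^{24})}$ to strip a trailing $e_0$ or $e_{-z^2}$, and $\partial^{(t_-^{23}+t_-^{24})}$ to strip any trailing letter with coefficient $-1$; the overlap of this last slot with the first two is exactly what produces the doubling. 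Each monomial of $h^{-1}$ then consumes a suffix of $u$, so the pairing splits over deconcatenations $u=PS$ into $\sum\langle h^{-1},\nu(S)\rangle\langle\exp(-\alpha(t_+^{23}+t_+^{24}))\delta(h)^{2,3,4},\psi(g(P))\rangle$, where $\nu$ is the ring homomorphism with $\nu(e_z)=e_0-e_{-1}$ and $\nu(e_0)=\nu(e_{-z^2})=e_1-e_{-1}$. By Lemma \ref{lem:pair_h234} the prefix factor reduces to $\langle h,\lambda(P')\rangle$ after the exponential strips a trailing $e_0/e_{-z^2}$ block $P=P'R$; since $u\in\mathcal{B}''$ begins with $e_z$ and $\lambda(e_z)=0$, this vanishes unless $P=\emptyset$. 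Hence only $P=\emptyset$ survives and the pairing collapses to $\langle h^{-1},\nu(u)\rangle$.

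It remains to identify $\langle h^{-1},\nu(u)\rangle$ with $\langle h,{\rm reg}_{z\to0}(u)\rangle$. Using that $h$ is group-like, so $\langle h^{-1},V\rangle=(-1)^{|V|}\langle h,\widetilde V\rangle$ for the reversed word $\widetilde V$, one rewrites $\langle h^{-1},\nu(u)\rangle=\langle h,\nu^-(\widetilde u)\rangle$ with $\nu^-(e_z)=e_{-1}-e_0$ and $\nu^-(e_0)=\nu^-(e_{-z^2})=e_{-1}-e_1$; a direct check shows $\nu^-(\widetilde u)=\tau_z(\overline{{\rm reg}}_{z\to0}(u))|_{z\to-1}$. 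The duality relation $\langle h,\tau_z(w)|_{z\to-1}\rangle=\langle h,w|_{z\to-1}\rangle$ ($w\in\mathcal{A}_z^0$) established inside the proof of Proposition \ref{prop:BroadhurstDuality} then gives $\langle h^{-1},\nu(u)\rangle=\langle h,\overline{{\rm reg}}_{z\to0}(u)\rangle$, and since $\overline{{\rm reg}}_{z\to0}(u)$ is admissible (as $u$ begins with $e_z$ and ends with $e_0$ or $e_{-z^2}$) the distribution relation of Proposition \ref{prop:pent_property} upgrades this to $\langle h,{\rm dist}(\overline{{\rm reg}}_{z\to0}(u))\rangle=\langle h,{\rm reg}_{z\to0}(u)\rangle$. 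The main obstacle is this final step: one must recognize that the signed, reversed letter-map $\nu$ thrown off by the peeling is exactly the $\tau_z$-image of $\overline{{\rm reg}}_{z\to0}(u)$ — so that Broadhurst duality applies — and then invoke the distribution relation to restore the factor ${\rm dist}$; getting the signs and the reversal convention of the pairing right is where the care is needed.
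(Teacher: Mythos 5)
Your proof is correct for the lemma as stated, but it follows a genuinely different route from the paper's. The paper never reintroduces $(h^{1,2,34})^{-1}$: it computes the three slots of $\mathcal{T}(h^{1,2,34})=h(t_{+}^{23}+t_{+}^{24},\,t^{12},\,-t^{12}-t_{+}^{23}-t_{+}^{24}-t_{-}^{23}-t_{-}^{24})$ directly, observes that the third slot annihilates every $g$-word, so that peeling $\mathcal{T}(h^{1,2,34})$ produces the \emph{unsigned} letter map $\overline{{\rm reg}}_{z\to0}$ paired against $h$ itself, handles the prefix by Lemma \ref{lem:pair_h234}, notes that only the empty prefix survives because $\lambda(e_{z})=0$, and finishes with the distribution relation. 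You instead undo the Broadhurst substitution, peel with $h^{-1}$ (whose $e^{-1}$-slot is now nonvanishing, whence the signed map $\nu$), and then must return to $h$ via the antipode and the identity $\langle h,\tau_{z}(w)|_{z\to-1}\rangle=\langle h,w|_{z\to-1}\rangle$ established inside the proof of Proposition \ref{prop:BroadhurstDuality}; in effect you invoke Broadhurst duality twice, in opposite directions, where the paper's proof of this lemma invokes it not at all. Your intermediate claims do check out: the deconcatenation coefficient is indeed $\langle h^{-1},\nu(S)\rangle$, the identity $\nu^{-}(\widetilde{u})=\tau_{z}(\overline{{\rm reg}}_{z\to0}(u))|_{z\to-1}$ holds letterwise, and ${\rm dist}(\overline{{\rm reg}}_{z\to0}(u))={\rm reg}_{z\to0}(u)$ on $\mathcal{B}''$. (One minor imprecision: $\partial^{(t_{-}^{23}+t_{-}^{24})}$ strips a trailing $e_{0}$, $e_{z}$ or $e_{-z^{2}}$ with coefficient $-1$ but kills a trailing $e_{-1}$, so ``any trailing letter'' is not literally right; this is harmless on $\mathcal{B}''$, which contains no $e_{-1}$.)

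One caveat: the hypothesis $u\in\mathcal{B}''$ appears to be a slip in the paper for $u\in\mathcal{B}$, since Lemma \ref{lem:right_part_final} applies this lemma to $\partial_{c_{1}}\cdots\partial_{c_{l}}u$ with $u\in\mathcal{B}$, and the paper's proof accordingly treats all of $\mathcal{B}$: words containing $e_{-1}$, the $\mathcal{B}'''$ case via $\wp$ and (\ref{eq:phico_eq_wp}), and shuffle products $\mathcal{B}''\shuffle\mathcal{B}'''$ via multiplicativity of the pairing against group-like elements. Your argument is genuinely tied to $\mathcal{B}''$: both the collapse to $P=\emptyset$ and the single application of the $\tau_{z}$-duality hinge on the leading $e_{z}$, and for $v\in\mathcal{B}'''$ every deconcatenation contributes, so the pairing is no longer $\langle h^{-1},\nu(v)\rangle$. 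Thus your proof settles the lemma as written, but to serve its downstream purpose it would have to be supplemented by the paper's case analysis.
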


\begin{proof}
Note that we have
\[
\mathcal{T}(h^{1,2,34})=h(t_{+}^{23}+t_{+}^{24},t^{12},-t^{12}-t_{+}^{23}-t_{+}^{24}-t_{-}^{23}-t_{-}^{24})
\]
and
\begin{align*}
\partial^{(t_{+}^{23}+t_{+}^{24})}(g(e_{a_{1}}\cdots e_{a_{k}})) & =(\delta_{a_{k},0}+\delta_{a_{k},-z^{2}})g(e_{a_{1}}\cdots e_{a_{k-1}}),\\
\partial^{(t^{12})}(g(e_{a_{1}}\cdots e_{a_{k}})) & =\delta_{a_{k},z}g(e_{a_{1}}\cdots e_{a_{k-1}}),\\
\partial^{(-t^{12}-t_{+}^{23}-t_{+}^{24}-t_{-}^{23}-t_{-}^{24})}(g(e_{a_{1}}\cdots e_{a_{k}})) & =0.
\end{align*}
Thus, for $e_{a_{1}}\cdots e_{a_{k}}\in\mathcal{B}$, we have 
\begin{align*}
 & \left\langle \exp(\alpha t^{12})\mathcal{T}(h^{1,2,34})\delta(h)^{2,3,4},\psi(g(e_{a_{1}}\cdots e_{a_{k}}))\right\rangle \\
 & =\left\langle \mathcal{T}(h^{1,2,34})\delta(h)^{2,3,4},\psi(g(e_{a_{1}}\cdots e_{a_{k}}))\right\rangle \qquad(a_{k}\neq z)\\
 & =\sum_{j=0}^{k}\left\langle \delta(h)^{2,3,4},\psi\circ g(e_{a_{1}}\cdots e_{a_{j}})\right\rangle \cdot\left\langle h,\overline{{\rm reg}}_{z\to0}(e_{a_{j+1}}\cdots e_{a_{k}})\right\rangle \\
 & =\sum_{j=0}^{k}\left\langle h,\lambda(e_{a_{1}}\cdots e_{a_{j}})\right\rangle \cdot\left\langle h,\overline{{\rm reg}}_{z\to0}(e_{a_{j+1}}\cdots e_{a_{k}})\right\rangle \qquad(\text{by Lemma (\ref{lem:pair_h234})}).
\end{align*}
If $a_{i}=z$ for some $1\leq i\leq k$, then
\begin{equation}
\sum_{j=0}^{k}\left\langle h,\lambda(e_{a_{1}}\cdots e_{a_{j}})\right\rangle \cdot\left\langle h,\overline{{\rm reg}}_{z\to0}(e_{a_{j+1}}\cdots e_{a_{k}})\right\rangle =0.\label{eq:zero_if_aj_z}
\end{equation}
If $e_{a_{1}}\cdots e_{a_{k}}\in\mathcal{B}''$ then we have
\begin{align}
 & \sum_{j=0}^{k}\left\langle h,\lambda(e_{a_{1}}\cdots e_{a_{j}})\right\rangle \cdot\left\langle h,\overline{{\rm reg}}_{z\to0}(e_{a_{j+1}}\cdots e_{a_{k}})\right\rangle \nonumber \\
 & =\sum_{j=0}^{k}\delta_{j,0}\left\langle h,\overline{{\rm reg}}_{z\to0}(e_{a_{j+1}}\cdots e_{a_{k}})\right\rangle \cdot\qquad(\text{by \ensuremath{a_{1}=z}})\nonumber \\
 & =\left\langle h,\overline{{\rm reg}}_{z\to0}(e_{a_{1}}\cdots e_{a_{k}})\right\rangle \nonumber \\
 & =\left\langle h,{\rm dist}\circ\overline{{\rm reg}}_{z\to0}(e_{a_{1}}\cdots e_{a_{k}})\right\rangle \qquad(\text{Proposition \ref{prop:pent_property}})\label{eq:B''_case}
\end{align}
If $e_{a_{1}}\cdots e_{a_{k}}\in\mathcal{B}'''$ then
\begin{align}
 & \sum_{j=0}^{k}\left\langle h,\lambda(e_{a_{1}}\cdots e_{a_{j}})\right\rangle \cdot\left\langle h,\overline{{\rm reg}}_{z\to0}(e_{a_{j+1}}\cdots e_{a_{k}})\right\rangle \cdot\nonumber \\
 & =\sum_{j=0}^{k}\left\langle h,\lambda(e_{a_{1}}\cdots e_{a_{j}})\right\rangle \cdot\left\langle h,e_{0}^{k-j}\right\rangle \qquad(a_{j+1},\dots,a_{k}\in\{0,-z^{2}\})\nonumber \\
 & =\sum_{j=0}^{k}\left\langle h,\lambda(e_{a_{1}}\cdots e_{a_{j}})\right\rangle \cdot\delta_{k-j}\qquad(\text{the coefficient of \ensuremath{e^{0}} in \ensuremath{h} is \ensuremath{0}})\nonumber \\
 & =\langle h,\lambda(e_{a_{1}}\cdots e_{a_{k}}).\nonumber \\
 & =\left\langle h,\varrho(\epsilon(e_{a_{1}}\cdots e_{a_{k}}))\right\rangle \nonumber \\
 & =\left\langle h,\wp(\epsilon(e_{a_{1}}\cdots e_{a_{k}}))\right\rangle \qquad(\text{Lemma (\ref{eq:phico_eq_wp})})\label{eq:B'''_case}
\end{align}
Thus, for $u\in\mathcal{B}''$ and $v\in\mathcal{B}''$, we have
\begin{align}
 & \left\langle \exp(\alpha t^{12})\mathcal{T}(h^{1,2,34})\delta(h)^{2,3,4},\psi(g(u\shuffle v))\right\rangle \nonumber \\
 & =\left\langle \exp(\alpha t^{12})\mathcal{T}(h^{1,2,34})\delta(h)^{2,3,4},\psi(g(u))\right\rangle \cdot\left\langle \exp(\alpha t^{12})\mathcal{T}(h^{1,2,34})\delta(h)^{2,3,4},\psi(g(v))\right\rangle \nonumber \\
 & =\left\langle h,{\rm dist}(\overline{{\rm reg}}_{z\to0}(u))\right\rangle \cdot\left\langle h,\wp(\epsilon(v))\right\rangle \qquad(\text{by (\ref{eq:B''_case}) and (\ref{eq:B'''_case})})\nonumber \\
 & =\left\langle h,{\rm dist}(\overline{{\rm reg}}_{z\to0}(u))\shuffle\wp(\epsilon(v))\right\rangle \nonumber \\
 & =\left\langle h,{\rm reg}_{z\to0}(u\shuffle v)\right\rangle .\label{eq:B''_B'''_case}
\end{align}
Now, the lemma follows from (\ref{eq:zero_if_aj_z}) and (\ref{eq:B''_B'''_case}).
\end{proof}
\begin{lem}
\label{lem:right_part_final}For $u\in\mathcal{B}$, we have
\[
\left\langle \exp(-\alpha(t^{13}+t_{+}^{23}+t_{-}^{23}))(h^{12,3,4})^{-1}(h^{1,2,34})^{-1}\delta(h)^{2,3,4}\exp(-\alpha(t_{+}^{23}+t_{+}^{24}+t_{+}^{34})),\psi(g(u))\right\rangle =\langle h,\varphi(u)\rangle.
\]
\end{lem}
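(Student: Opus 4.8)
The overall strategy is to evaluate the left-hand pairing by peeling the factors of the product against $\psi(g(u))$ one at a time via the recursion (\ref{eq:pair_1_psi})--(\ref{eq:pair_s_psi}), in the same spirit as the proofs of Lemmas \ref{lem:pent-right} and \ref{lem:pair_exp(alpha_t12)}. Since $h$ is group-like, $\langle h,-\rangle$ is a $\shuffle$-character, so the shuffle product in the definition of $\varphi$ splits and the target takes the factorized form
\[
\langle h,\varphi(u)\rangle=\sum_{l=0}^{\infty}\sum_{c_{1},\dots,c_{l}\in\{0,1,-1\}}\big\langle h,{\rm reg}_{z\to0}(\partial_{c_{1}}\cdots\partial_{c_{l}}u)\big\rangle\cdot\big\langle h,{\rm reg}_{\shuffle}(e_{c_{1}}\cdots e_{c_{l}})\big\rangle .
\]
My aim is to reproduce this sum on the nose.

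First I would apply Broadhurst duality (Proposition \ref{prop:BroadhurstDuality}) in the realizations $12,3,4$ and $1,2,34$ to rewrite $(h^{12,3,4})^{-1}=\exp(\alpha(t^{13}+t_{+}^{23}+t_{-}^{23}))\mathcal{T}(h^{12,3,4})\exp(\alpha t_{+}^{34})$ and $(h^{1,2,34})^{-1}=\exp(\alpha t^{12})\mathcal{T}(h^{1,2,34})\exp(\alpha(t_{+}^{23}+t_{+}^{24}))$. The two boundary exponentials are present precisely for bookkeeping: the prefactor $\exp(-\alpha(t^{13}+t_{+}^{23}+t_{-}^{23}))$ cancels the exponential produced by Broadhurst duality for $h^{12,3,4}$, and after using $[t^{12},t_{+}^{34}]=0$ one is left, next to $\mathcal{T}(h^{1,2,34})$, with $\delta(h)^{2,3,4}$ and the trailing exponentials. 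These trailing factors contribute only through the derivations $\partial^{(t_{+}^{23})}$, $\partial^{(t_{+}^{24})}$, $\partial^{(t_{+}^{34})}$ acting via $g$ on $\mathcal{B}$-words, whose effect is already pinned down in the proof of Lemma \ref{lem:pair_exp(alpha_t12)}; the plan is to check that they recombine so that the residual block paired against the processed word is exactly the object $\exp(\alpha t^{12})\mathcal{T}(h^{1,2,34})\delta(h)^{2,3,4}$ of Lemma \ref{lem:pair_exp(alpha_t12)}. This block evaluates to $\langle h,{\rm reg}_{z\to0}(\cdot)\rangle$ and supplies the first factor in each summand above.

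Next I would peel the remaining outer factor coming from $h^{12,3,4}$, following the template of Lemma \ref{lem:pent-right}, where a single realization of $h$ acts through the word as a product of combinatorial derivations while simultaneously recording a letter at each level. Concretely, the key computation is $\partial^{(s)}\circ g$ for the generators $s$ in each slot, reading $\langle s,d\log(\kappa(p)-\kappa(q))\rangle$ off the $\kappa$ $d\log$ table as the order of vanishing of $\kappa(p)-\kappa(q)$ along the divisor cut out by $s$; since $g$ sends the letter $e_a$ to the component $\kappa(a/z)$, the integer weights so produced must be compared with the ${\rm ord}_{z=c}$ weights in the definition of $\partial_{c}$, the uniform shift coming from dividing by $z$ cancelling in the telescoping difference that defines $\partial_{c}$. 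Peeling this factor then applies $\partial_{c_{1}}\cdots\partial_{c_{l}}$ to the $\mathcal{B}$-word and, by group-likeness, contributes the recording factor $\langle h,{\rm reg}_{\shuffle}(e_{c_{1}}\cdots e_{c_{l}})\rangle$. Summing over all sequences $(c_{1},\dots,c_{l})\in\{0,1,-1\}^{l}$ then reassembles $\langle h,\varphi(u)\rangle$ and proves the lemma.

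The main obstacle is the third paragraph: carrying out the case analysis over the entries of the $\kappa$ $d\log$ table to confirm that the peeled derivations reproduce $\partial_{0},\partial_{1},\partial_{-1}$ with the correct signs and integer weights. The delicate point is that not every confluence direction is visible to an individual generator of $\mathfrak{t}_{4,2}^{0}$ (the divisor $z_{4}=0$, corresponding to the abstract specialization $z=-1$, would require the letter $t^{14}$, which is absent from $\mathfrak{t}_{4,2}^{0}$), so one must verify that the corresponding collisions are nonetheless accounted for through the combination of the visible poles, exactly as the operators $D_{0},D_{1}$ in Section \ref{sec:Broadhurst-duality} were linear combinations of the $\partial_{z,c}$. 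Tracking the regularizations ${\rm reg}_{z\to0}$ and ${\rm reg}_{\shuffle}$ through these manipulations is where the distribution and regularized double-shuffle properties of Proposition \ref{prop:pent_property} together with the identity (\ref{eq:phico_eq_wp}) enter, and constitutes the bulk of the remaining work.
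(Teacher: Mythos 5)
Your proposal follows the paper's own proof essentially step for step: the same double application of Broadhurst duality (in the realizations $12,3,4$ and $1,2,34$) together with the commutation relations to collapse the exponentials, reducing the product to $\mathcal{T}(h^{12,3,4})\exp(\alpha t^{12})\mathcal{T}(h^{1,2,34})\delta(h)^{2,3,4}$; the same invocation of Lemma \ref{lem:pair_exp(alpha_t12)} for the inner block; and the same peeling of the outer factor $\mathcal{T}(h^{12,3,4})$, whose three slots realize $\partial_{0},\partial_{1},\partial_{-1}$ through $g$, reassembled via group-likeness and ${\rm reg}_{\shuffle}$ into $\langle h,\varphi(u)\rangle$. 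The verification you defer as the ``main obstacle'' — that the slot derivations reproduce the $\partial_{c}$, with the uniform shift from dividing by $z$ cancelling in the telescoping difference and with the $z=-1$ collisions captured by a combination of generators rather than the absent $t^{14}$ — is precisely what the paper asserts (without further written detail) from its table of $d\log(\kappa(a)-\kappa(a'))$, so your plan is the paper's proof.
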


\begin{proof}
We have 
\begin{align*}
 & \exp(-\alpha(t^{13}+t_{+}^{23}+t_{-}^{23}))(h^{12,3,4})^{-1}(h^{1,2,34})^{-1}\delta(h)^{2,3,4}\exp(-\alpha(t_{+}^{23}+t_{+}^{24}+t_{+}^{34}))\\
 & =\mathcal{T}(h^{12,3,4})\exp(\alpha t_{+}^{34})(h^{1,2,34})^{-1}\delta(h)^{2,3,4}\exp(-\alpha(t_{+}^{23}+t_{+}^{24}+t_{+}^{34}))\qquad(\text{Broadhurst duality})\\
 & =\mathcal{T}(h^{12,3,4})(h^{1,2,34})^{-1}\exp(-\alpha(t_{+}^{23}+t_{+}^{24}))\delta(h)^{2,3,4}\qquad([t_{+}^{34},(h^{1,2,34})^{-1}]=[\delta(h)^{2,3,4},t_{+}^{23}+t_{+}^{24}+t_{+}^{34}])\\
 & =\mathcal{T}(h^{12,3,4})\exp(\alpha t^{12})\mathcal{T}(h^{1,2,34})\delta(h)^{2,3,4}\qquad(\text{Broadhurst duality}),
\end{align*}
and thus
\begin{align}
 & \left\langle \exp(-\alpha(t^{13}+t_{+}^{23}+t_{-}^{23}))(h^{12,3,4})^{-1}(h^{1,2,34})^{-1}\delta(h)^{2,3,4}\exp(-\alpha(t_{+}^{23}+t_{+}^{24}+t_{+}^{34})),\psi(g(u))\right\rangle \nonumber \\
 & =\left\langle \mathcal{T}(h^{12,3,4})\exp(\alpha t^{12})\mathcal{T}(h^{1,2,34})\delta(h)^{2,3,4},\psi(g(u))\right\rangle .\label{eq:proof_main_step1}
\end{align}
We have
\[
\mathcal{T}(h^{12,3,4})=h(t_{+}^{34},t^{13}+t_{+}^{23}+t_{-}^{23},-t^{13}-t_{+}^{23}-t_{-}^{23}-t_{+}^{34}-t_{-}^{34})
\]
and
\begin{align*}
\partial^{(t_{+}^{34})}g(u) & =g(\partial_{0}(u)),\\
\partial^{(t^{13}+t_{+}^{23}+t_{-}^{23})}g(u) & =g(\partial_{1}(u)),\\
\partial^{(-t^{13}-t_{+}^{23}-t_{-}^{23}-t_{+}^{34}-t_{-}^{34})}g(u) & =g(\partial_{-1}(u)).
\end{align*}
Thus we have
\begin{align}
 & \left\langle \mathcal{T}(h^{12,3,4})\exp(\alpha t^{12})\mathcal{T}(h^{1,2,34})\delta(h)^{2,3,4},\psi(g(u))\right\rangle \nonumber \\
 & =\sum_{l=0}^{\infty}\sum_{c_{1},\dots,c_{l}\in\{0,1,-1\}}\left\langle h,e_{c_{1}}\cdots e_{c_{l}}\right\rangle \cdot\left\langle \exp(\alpha t^{12})\mathcal{T}(h^{1,2,34})\delta(h)^{2,3,4},g(\partial_{c_{1}}\cdots\partial_{c_{l}}u)\right\rangle \nonumber \\
 & =\sum_{l=0}^{\infty}\sum_{c_{1},\dots,c_{l}\in\{0,1,-1\}}\left\langle h,e_{c_{1}}\cdots e_{c_{l}}\right\rangle \cdot\left\langle h,{\rm reg}_{z\to0}(\partial_{c_{1}}\cdots\partial_{c_{l}}(u))\right\rangle \qquad(\text{Lemma \ref{lem:pair_exp(alpha_t12)}})\nonumber \\
 & =\sum_{l=0}^{\infty}\sum_{c_{1},\dots,c_{l}\in\{0,1,-1\}}\left\langle h,{\rm reg}_{\shuffle}(e_{c_{1}}\cdots e_{c_{l}})\right\rangle \cdot\left\langle h,{\rm reg}_{z\to0}(\partial_{c_{1}}\cdots\partial_{c_{l}}u)\right\rangle \nonumber \\
 & =\left\langle h,{\rm reg}_{z\to0}(\partial_{c_{1}}\cdots\partial_{c_{l}}u)\shuffle{\rm reg}_{\shuffle}(e_{c_{1}}\cdots e_{c_{l}})\right\rangle \nonumber \\
 & =\left\langle h,\varphi(u)\right\rangle .\label{eq:proof_main_step2}
\end{align}
Hence the lemma follows from (\ref{eq:proof_main_step1}) and (\ref{eq:proof_main_step2}).
\end{proof}
Now, we can prove the main theorem of this paper.
\begin{proof}[Proof of Theorem \ref{thm:Gpent-ICF}]
Recall that $\mathcal{I}_{{\rm CF}}$ is defined by
\[
\mathcal{I}_{{\rm CF}}\coloneqq\{u\mid_{z\to1}-\varphi(u)\mid u\in\mathcal{B}\}.
\]
For $h\in{\rm PENT}(2,{\bf k})$ and $u\in\mathcal{B}$, by (\ref{eq:pen_e0_pair})
and Lemmas \ref{lem:left_step2} and \ref{lem:right_part_final},
we have
\[
\langle h,u\mid_{z\to1}-\varphi(u)\rangle=0.
\]
Thus Theorem \ref{thm:Gpent-ICF} is proved. Furthermore, by Theorem
\ref{thm:kerLm=00003DICF}, we have 
\[
\langle h,u\rangle=0\qquad(h\in{\rm PENT}(2,{\bf k}),\,u\in\ker L^{\mathfrak{m}}),
\]
and thus
\[
\langle h,u\rangle=0\qquad(h\in\varphi\in{\rm GRTM}_{(\bar{1},1)}(2,{\bf k}),\,u\in\ker L^{\mathfrak{a}}).
\]
As explained in the introduction, this implies Theorem \ref{thm:main}.
\end{proof}

\subsection*{Acknowledgements}

The author would like to thank Hidekazu Furusho and Kenji Sakugawa
for useful comments and advices. This work was supported by JSPS KAKENHI
Grant Numbers JP18K13392 and JP22K03244.


\begin{thebibliography}{10}
\bibitem{Bro_mix}F. Brown, `Mixed Tate motives over $\mathbb{Z}$',
Ann. Math., 175 (2012), 949-976.

\bibitem{Deli_es}P. Deligne, `Le groupe fondamental unipotent motivique
de $\mathbb{G}_{m}-\mu_{N}$, pour $N=2,3,4,6$ ou $8$', Publ. Math.
Inst. Hautes Etudes Sci. (2010), 101--141.

\bibitem{DelGon}P. Deligne and A. B. Goncharov, `Groupes fondamentaux
motiviques de Tate mixte', Annales scientifiques de l'École Normale
Supérieure 38.1 (2005): 1-56.

\bibitem{Dr_quasi}V. G. Drinfeld, `On quasitriangular quasi-Hopf
algebras and a group closely connected with Gal($\bar{\mathbb{Q}}/\mathbb{Q}$)',
Leningrad Math. J. 2 (1991), no. 4, 829--860.

\bibitem{Enr_quasi}B. Enriquez, `Quasi-reflection algebras and cyclotomic
associators', Selecta Math. (N.S.) 13 (2007), 391--463.

\bibitem{EnrFur}B. Enriquez and H. Furusho, `Mixed pentagon, octagon
and Broadhurst duality equations', J. Pure Appl. Algebra 216 (2012),
982-995.

\bibitem{Fur_hex}H. Furusho, `Pentagon and hexagon equations', Ann.
of Math. 171 (2010), 545-556.

\bibitem{Fur_dsh}H. Furusho, `Double shuffle relation for associators',
Ann. of Math. 174 (2011), 341-360.

\bibitem{Gro}A. Grothendieck, `Esquisse d\textquoteright un programme,'
in \textquotedblleft Geometric Galois actions, 1\textquotedblright ,
London Math. Soc. Lecture Note Ser., 242, (1997) 5--48.

\bibitem{HS_AlgDif}M. Hirose and N. Sato, `Algebraic differential
formulas for the shuffle, stuffle and duality relations of iterated
integrals,' J. Algebra 556 (2020), 363-384.

\bibitem{HS_EulerSum}M. Hirose and N. Sato, `The motivic Galois group
of mixed Tate motives over $\mathbb{Z}[1/2]$ and its action on the
fundamental group of $\mathbb{P}^{1}\setminus\{0,\pm1,\infty\}$',
arXiv:2007.04288v2 {[}math.NT{]}.

\bibitem{Ihara}Y. Ihara, Some arithmetic aspects of Galois actions
in the pro-$p$ fundamental group of $\mathbb{P}^{1}-\{0,1,\infty\}$.
In: Arithmetic fundamental groups and noncommutative algebra (Berkeley,
CA, 1999), Proc. Sympos. Pure Math., 70, Amer. Math. Soc., Providence,
RI, 2002, 247--273
\end{thebibliography}
\end{document}